\theoremstyle{plain}
\newtheorem{theo}{Theorem}[section]
\newtheorem{lemma}[theo]{Lemma}
\newtheorem{cor}[theo]{Corollary}
\newtheorem{prop}[theo]{Proposition}
\newtheorem{lemmaa}{Lemma}
\theoremstyle{definition}
\newtheorem{defi}[theo]{Definition}
\theoremstyle{remark}
\newtheorem{rem}[theo]{Remark}
\newtheorem{ex}[theo]{Example}
\newtheorem{nota}[theo]{Notation}
\newcommand{\tr}{{\rm tr}}
\newcommand{\Tr}{{\rm Tr}}
\newcommand{\End}{{\rm End}}
\newcommand{\Hom}{{\rm Hom}}
\newcommand{\Com}{{\rm Comod}}
\newcommand{\id}{{\rm id}}
\newcommand{\Aaut}{A_{\textbf{aut}}}
\newcommand{\A}{\mathcal{A}}
\newcommand{\gO}{\mathcal{O}}
\newcommand{\R}{\mathcal{R}}
\newcommand{\N}{\mathbb{N}}
\newcommand{\C}{\mathbb{C}}
\newcommand{\Z}{\mathbb{Z}}
\newcommand{\amb}[1]{Let us show that the ambiguity #1 is resolvable. On the first hand we have:}
\newcommand{\Amb}[1]{let us show that the ambiguity #1 is resolvable. On the first hand we have:}
\begin{document}
\title{Quantum automorphism groups and $SO(3)$-deformations}
\author{Colin MROZINSKI}
\date{}
\maketitle
\begin{center}
\textit{{Laboratoire de~Math\'ematiques (UMR 6620)}, Universit\'e Blaise Pascal, \\ Complexe universitaire des C\'ezeaux, 63171 Aubi\`ere Cedex, France}
\end{center}
\begin{center}
{\tt colin.mrozinski@math.univ-bpclermont.fr}
\end{center}
\begin{abstract}
We show that any compact quantum group having the same fusion rules as the ones of $SO(3)$ is the quantum automorphism group of a pair $(A, \varphi)$, where $A$ is a finite dimensional $C^*$-algebra endowed with a homogeneous faithful state. We also study the representation category of the quantum automorphism group of $(A, \varphi)$ when $\varphi$ is not necessarily positive, generalizing some known results,
and we discuss the possibility of classifying the cosemisimple (not necessarily compact) Hopf algebras whose corepresentation semi-ring is isomorphic to that of $SO(3)$.
\end{abstract}

\section{Introduction and main results}

The quantum automorphism group of a measured finite dimensional $C^*$-algebra $(A,\varphi)$ (i.e. a finite-dimensional $C^*$-algebra $A$ endowed with a faithful state $\varphi$)  has been defined by Wang in \cite{Wang1} as the universal object in the category of compact quantum groups acting on $(A,\varphi)$. The corresponding compact Hopf algebra is denoted by $\Aaut(A,\varphi)$.

The structure of $\Aaut(A,\varphi)$ depends on the choice of the measure $\varphi$, and the representation theory of this quantum group is now well understood \cite{Ban2, BanFussCat}, provided a good choice of $\varphi$ has been done, namely that $\varphi$ is a $\delta$-form (we shall say here that $\varphi$ is homogeneous, and that $(A, \varphi)$ is a homogeneous measured $C^*$-algebra). Banica's main result in \cite{Ban2, BanFussCat} is that if $\varphi$ is homogeneous and $\dim(A)\geq 4$, then $\Aaut(A,\varphi)$ has the same corepresentation semi-ring as $SO(3)$. See also \cite{BrannRedQAG}. The result can be further extended to show that the corepresentation category of  $\Aaut(A,\varphi)$ is monoidally equivalent to the representation category of a quantum $SO(3)$-group at a well chosen parameter, see \cite{DeRVaVenn}.
 
Then a natural question, going back to \cite{Ban2, BanFussCat} and formally asked in \cite{BanicaSurvey}, is whether any compact quantum group with the same fusion rules as $SO(3)$ is the quantum automorphism group of an appropriate measured finite-dimensional $C^*$-algebra. The main result in this paper is a positive answer to this question.

\begin{theo}\label{ClassTh}
Let $H$ be a compact Hopf algebra with corepresentation semi-ring isomorphic to that of $SO(3)$. Then there exists a finite dimensional homogeneous measured $C^*$-algebra $(A, \varphi)$  with $\dim(A)\geq 4$ such that $H \simeq \Aaut(A,\varphi)$.
\end{theo}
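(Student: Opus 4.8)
The plan is to reconstruct the pair $(A,\varphi)$ directly from the corepresentation category of $H$ and then recover $H$ from $\Aaut(A,\varphi)$ by a Tannaka--Krein--Woronowicz argument. Since $H$ has the fusion rules of $SO(3)$, there is a self-dual generating corepresentation $u$ (the analogue of the spin-$1$ representation) of classical dimension $n\geq 3$, the bound $n\geq 3$ being an easy consequence of the fusion rules. The fusion rule $u\otimes u=\mathbf 1\oplus u\oplus w$ shows that both $\Hom(\mathbf 1,u\otimes u)$ and $\Hom(u,u\otimes u)$ are one-dimensional. First I would fix generators of these two spaces: a duality morphism coming from the self-duality of $u$, and a ``multiplication'' morphism $m_0:u\otimes u\to u$.

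Next I would put a finite-dimensional algebra structure on the comodule $A:=\mathbf 1\oplus u$, of dimension $n+1\geq 4$. The element $\mathbf 1$ is the unit, and the product of two vectors in the $u$-part is defined to land in $\mathbf 1\oplus u$ via a suitable linear combination of the duality morphism (into $\mathbf 1$) and $m_0$ (into $u$); the comodule structure then automatically makes the multiplication, the unit and the canonical state $\varphi$ into $H$-comodule morphisms. The point is to tune the normalisations so that (i) the multiplication is associative, (ii) $A$ is a $C^*$-algebra, and (iii) $\varphi$ is a faithful $\delta$-form, i.e. $(A,\varphi)$ is a homogeneous measured $C^*$-algebra. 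Equivalently, this amounts to exhibiting $\mathbf 1\oplus u$ as a special symmetric Frobenius algebra (a $Q$-system) in the rigid $C^*$-tensor category $\mathrm{Corep}(H)$, from which the genuine $C^*$-algebra $A$ carrying an $H$-coaction is produced by the standard correspondence.

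Once $(A,\varphi)$ is in hand, universality of $\Aaut(A,\varphi)$ applied to the $H$-coaction yields a Hopf algebra morphism $\pi:\Aaut(A,\varphi)\to H$; it is surjective because the matrix coefficients of $u$ already generate $H$. By Banica's theorem quoted above, $\Aaut(A,\varphi)$ also has the fusion rules of $SO(3)$ since $\dim A\geq 4$, and $\pi$ carries generator to generator. The induced monoidal functor $\mathrm{Corep}(H)\to\mathrm{Corep}(\Aaut(A,\varphi))$ is then a full embedding that is bijective on isomorphism classes of simple objects, because the multiplicity spaces on both sides have the dimensions dictated by the common fusion rules, hence a monoidal equivalence, which forces $\pi$ to be an isomorphism.

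I expect the second step to be the main obstacle. Producing the morphisms is immediate, but verifying associativity is a nontrivial identity in $\Hom(u,u^{\otimes 3})$ (a three-dimensional space by the fusion rules) that must be forced from the specific relations of the $SO(3)$-category rather than from the fusion rules alone, and typically only holds after the correct rescaling of $m_0$ relative to the duality morphism. More delicate still is turning the abstract algebra into a genuine $C^*$-algebra equipped with a \emph{positive} faithful state that is moreover homogeneous: positivity is not formal and must be extracted from the $*$-structure and the positivity of the invariant inner products available because $H$ is compact, while homogeneity pins down the admissible normalisations, and hence the matrix type $A\simeq\bigoplus_i M_{n_i}(\C)$ that records which member of the $SO(3)$-family $H$ actually is.
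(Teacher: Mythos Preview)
Your outline is essentially the paper's own strategy: build $A=\C\oplus W_1$ with multiplication assembled from the duality pairing and a generator of $\Hom(W_1\otimes W_1,W_1)$, verify it is a homogeneous measured $C^*$-algebra carrying the $H$-coaction, then invoke universality of $\Aaut(A,\varphi)$ together with Banica's fusion-rule computation and a semi-ring argument. You have also correctly located the two hard points (associativity and positivity/$*$-structure).

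Two remarks. First, a small slip: the Hopf algebra map $\pi:\Aaut(A,\varphi)\to H$ induces a functor $\mathrm{Corep}(\Aaut(A,\varphi))\to\mathrm{Corep}(H)$, not the other way around; your isomorphism argument goes through once the direction is corrected (this is exactly the paper's ``standard semi-ring argument'': $\pi_*$ sends the fundamental simple to the fundamental simple, hence all simples to the corresponding simples, so $\pi$ is bijective on matrix coefficients). Second, the paper's resolution of your ``main obstacle'' hinges on a specific use of compactness that is worth flagging: from the fusion rules alone one only gets a cube root of unity $\omega$ governing the symmetry relations between $C$ and $D$, and associativity in the form you need requires $\omega=1$. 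The paper forces $\omega=1$ by showing that $\omega\neq 1$ would imply $\tau=\pm 2$ for the quantum-dimension parameter $\tau=\tr(FF^*)$, which is excluded in the compact case by an elementary estimate (their Lemma~\ref{LemmeTau}) since $\dim W_1\geq 3$. The $*$-structure and positivity of $\varphi$ are then obtained (their Lemma~\ref{LemmInvol}) from the unitary structure on $W_1$, where compactness is again used to rule out the sign $R=-1$ in $F\overline F=R I$ and to pin down the sign in $C(v^*\otimes w^*)=C(w\otimes v)^*$ via a positivity computation. These are precisely the normalisation and sign issues you anticipated.
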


Recall that if $G$ is a reductive algebraic group, a $G$-deformation is a cosemisimple Hopf algebra $H$ such that $\R^+(H) \simeq \R^+(\gO(G))$, where $\R^+$ denotes the corepresentation semi-ring. The problem of the classification of $G$-deformations has been already studied for several algebraic groups: see \cite{Wo, Ban1, PM, Bi1} for $SL(2)$, \cite{Ohn, Mroz1} for  $GL(2)$, and \cite{Ohn2} for $SL(3)$.
Thus Theorem \ref{ClassTh} provides the full description of the compact $SO(3)$-deformations.  

The next natural step is then to study the non-compact $SO(3)$-deformations. For this purpose we study the comodule category of $\Aaut(A,\varphi)$ with $\varphi$ non necessarily positive and give a generalization of the results from \cite{Ban2, BanFussCat, Bichon3, DeRVaVenn} (together with independent proof of these results), as follows (see Section 2 for the relevant definitions).

\begin{theo}\label{SO3comod}
 Let $(A,\varphi)$ be a finite dimensional, semisimple algebra endowed with a normalizable measure $\varphi$, with $\dim A \geq 4$. Then there exists  a $\C$-linear equivalence of monoidal categories $$\Com(\Aaut(A,\varphi))\simeq^{\otimes}\Com(\gO(SO_{q}(3)))$$ between the comodule categories of $\Aaut(A,\varphi)$ and $\gO(SO_{q}(3))$ respectively, for some well-chosen $q \in \C^*$.
\end{theo}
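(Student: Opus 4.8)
The plan is to reconstruct both comodule categories by Tannaka--Krein duality and to exhibit an explicit $\C$-linear monoidal equivalence between the resulting diagrammatic categories. First I would recall that, by the universal property defining $\Aaut(A,\varphi)$, the algebra $A$ is canonically a comodule, and that the full subcategory of $\Com(\Aaut(A,\varphi))$ tensor-generated by $A$ has morphism spaces $\Hom(A^{\otimes k},A^{\otimes l})$ spanned, under composition, tensor product and duality, by the structural morphisms of $A$: the multiplication $m\colon A\otimes A\to A$, the unit $\eta\colon \C\to A$, their adjoints with respect to the pairing induced by $\varphi$, and the counit. The hypothesis that $\varphi$ is a normalizable measure is exactly what forces the Frobenius relation $m\circ m^{*}=\delta\,\id_A$ for a scalar $\delta\in\C^{*}$, so that $A$ becomes a special symmetric Frobenius algebra object and these morphisms obey planar, Temperley--Lieb type relations.

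Second, and this is the technical heart, I would prove that the dimensions of all the $\Hom$-spaces between tensor powers of $A$ agree with those predicted by the $SO(3)$ fusion rules, i.e. that there are no unexpected relations among the diagrams beyond those dictated by the Frobenius structure with modulus $\delta$. Splitting off the trivial subobject $A=\C\,1\oplus V$, this amounts to showing that the category generated by $A$ is the free $\C$-linear rigid monoidal category on a special symmetric Frobenius algebra satisfying the $\delta$-relation, which by the theory of Temperley--Lieb/Jones towers is $\C$-linearly monoidally identified with $\Com(\gO(SO_q(3)))$.

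Third, I would determine $q$ from $\delta$. The scalar $\delta$ with $m\circ m^{*}=\delta\,\id_A$ is a categorical datum preserved by the reconstruction, and the Temperley--Lieb structure on the algebras $\End(A^{\otimes k})$ it induces is the one realized by $\Com(\gO(SO_q(3)))$ precisely when $q\in\C^{*}$ satisfies $q+q^{-1}=\sqrt{\delta}$ (for $(A,\varphi)=(\C^{N},\mathrm{tr})$ one computes $\delta=N$, recovering the known relation $q+q^{-1}=\sqrt{N}$). Normalizability of $\varphi$ guarantees $\delta\in\C^{*}$, hence such a $q$ exists, determined up to the inversion $q\leftrightarrow q^{-1}$ which leaves $\gO(SO_q(3))$ unchanged. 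Crucially, because the equivalence is only required to be $\C$-linear and monoidal, not compatible with the forgetful functors, no matching of the underlying vector-space dimensions is needed; only the single scalar $\delta$ must agree. Once generators, relations and $\delta$ are matched, the functor sending $V$ to the spin-one object and the structural morphisms to the corresponding Temperley--Lieb morphisms is fully faithful, essentially surjective and $\C$-linear monoidal between semisimple rigid tensor categories, hence an equivalence.

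The main obstacle is the faithfulness step in the second paragraph: when $\varphi$ is not positive one loses the $C^{*}$-algebraic tools (the GNS construction, positivity of the inner products on the $\Hom$-spaces, Jones' index estimates) that, in the compact case of \cite{Ban2, BanFussCat, DeRVaVenn}, guarantee that the spanning diagrams are linearly independent. Establishing that the diagrammatic model has exactly the expected dimension in the non-positive setting must therefore be carried out purely algebraically --- for instance by constructing an explicit faithful representation of the morphism spaces, or by exhibiting the Jones--Wenzl idempotents and checking they remain well defined and nonzero for the relevant values of $\delta$ --- and this is where the argument genuinely departs from, and thereby extends, the earlier proofs.
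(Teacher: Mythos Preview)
Your strategy is genuinely different from the paper's. You aim for a direct Tannakian argument: identify the monoidal subcategory generated by $A$ with a diagrammatic (Temperley--Lieb/Frobenius) category depending only on the modulus $\delta$, and match $\delta$ on both sides. The paper instead constructs, for any two multimatrices $E,F$ with $\Tr(E^{-1})=\Tr(F^{-1})$ and $\tr(E_\lambda)=\tr(F_\mu)$, an explicit algebra $\A(E,F)$ carrying commuting $\Aaut(A_E,\tr_E)$--$\Aaut(A_F,\tr_F)$ coactions, proves $\A(E,F)\neq 0$ by the diamond lemma, and invokes Schauenburg's theorem on Hopf bi-Galois objects to obtain the monoidal equivalence. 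The case $A=\C^n$ (all blocks of size $1$) is handled separately by a cocycle-twist argument. The bi-Galois route buys exactly what you are missing: nonvanishing of $\A(E,F)$ is a concrete algebraic computation that replaces, in one stroke, the entire faithfulness analysis of the diagram category.

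The gap in your proposal is precisely the step you flag as ``the main obstacle'', and your suggested remedies do not close it. Checking that the Jones--Wenzl idempotents are well defined and nonzero works only away from roots of unity; at the critical values of $\delta$ some of them are undefined, yet the theorem must still hold there (and indeed $\gO(SO_q(3))$ is \emph{not} cosemisimple for such $q$, so your final sentence about ``semisimple rigid tensor categories'' is incorrect in that regime). Likewise, ``constructing an explicit faithful representation of the morphism spaces'' is essentially equivalent to the problem you are trying to solve. A second issue is that you implicitly treat all normalizable $(A,\varphi)$ uniformly, whereas the purely commutative case $A=\C^n$ has no matrix block of size $\geq 2$ and does not fit the same presentation; the paper needs a separate twisting argument there, and your outline gives no indication of how you would handle it.

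In short: the Tannakian route is plausible and, if the faithfulness step could be carried out purely algebraically for all $\delta$, would yield an alternative proof; but as written it is a programme rather than a proof, and the paper's cogroupoid/diamond-lemma approach is what actually establishes the result in the non-positive, possibly root-of-unity setting.
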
 

We have not been able to show that all $SO(3)$-deformations arise as quantum automorphism groups as in the previous theorem. However see Section 5 for partial results in this direction. Note that the monoidal reconstruction theorem of Tuba-Wenzl \cite{TubaWenzl}, which discuss the related but non equivalent problem of determining the braided semisimple tensor categories of type B, cannot be used in our setting, where the existence of a braiding is not assumed.

This paper is organized as follows: in Sec. 2, we fix some notations and definitions, state some basic facts about compact Hopf algebras, finite dimensional algebras and we recall the construction of the quantum automorphism group of a finite dimensional, semisimple, measured algebra. Theorem \ref{ClassTh} is proved in Sec. 3, thanks to a careful study of the fusion rules of $SO(3)$. In Sec. 4, we prove Theorem \ref{SO3comod} by building a cogroupoid linking these Hopf algebras and studying its connectedness and in Sec. 5, we prove some classification results about Hopf algebras having a corepresentation semi-ring isomorphic to that of $SO(3)$.

\section{Preliminaries}

\subsection{Compact Hopf algebras}

Let us recall the definition of a compact Hopf algebra (see \cite{KS}): 

\begin{defi}
 \begin{enumerate}
  \item A Hopf $*$-algebra is a Hopf algebra $H$ which is also a $*$-algebra and such that the comultiplication is a $*$-homomorphism.
  \item If $x=(x_{ij})_{1\leq i,j\leq n}\in M_n(H)$ is a matrix with coefficient in $H$, the matrix $(x^*_{ij})_{1\leq i,j\leq n}$ is denoted by $\overline{x}$, while $\overline{x}^t$, the transpose matrix of $\overline{x}$, is denoted by $x^*$. The matrix $x$ is said to be unitary if $x^*x=I_n=xx^*$.
  \item A Hopf $*$-algebra is said to be a compact Hopf algebra if for every finite-dimensional $H$-comodule with associated multiplicative matrix of coefficients $x\in M_n(H)$, there exists $K\in GL_n(\C)$ such that the matrix $KxK^{-1}$ is unitary.
 \end{enumerate}
\end{defi}
Compact Hopf algebras correspond to Hopf algebras of representative functions on compact quantum groups. In this paper, we only consider compact quantum groups at the level of compact Hopf algebras.

\subsection{Finite dimensional semisimple algebras}

In this subsection, we collect some facts about finite dimensional algebras and introduce some convenient notations and definitions.

\begin{defi}
 Let $A$ be an algebra. A \emph{measure} on $A$ is a linear form $\varphi : A \to \C$ such that the induced bilinear form $\varphi \circ m : A\otimes A \to \C$ is non degenerate. A \emph{measured algebra} is a pair $(A, \varphi)$ where $A$ is an algebra and $\varphi$ is a measure on $A$.
\end{defi}

The following definition will be useful:

\begin{defi}\label{DefNorm}
  Let $(A, \varphi)$ be a finite dimensional semisimple measured algebra. Let $\tilde{\delta} : \C \to A\otimes A$ be the dual map of the bilinear form $\varphi \circ m$. We define the application $$\tilde{\varphi} := (\varphi \circ m) \circ (m \otimes \id_A) \circ (\id_A \otimes \tilde{\delta}) : A \to \C. $$  Then 
\begin{itemize}
  \item We say that $(A, \varphi)$ is \emph{homogeneous} if there exists $\lambda_A \in \C^*$ such that $\tilde{\varphi} = \lambda_A \varphi$.
  \item We say that $(A, \varphi)$ is \emph{normalized} if $\tilde{\varphi} = \varphi(1_A) \varphi$.
  \item We say that $(A, \varphi)$ is \emph{normalizable} if $(A, \varphi)$ is homogeneous and $\varphi(1_A)\neq 0$.
 \end{itemize}
We say that a measured $C^*$-algebra $(A,\varphi)$ is homogeneous (resp. normalized) if $\varphi$ is homogeneous (resp. normalized) and positive.
\end{defi}

\begin{rem}
 It is a consequence of Cauchy-Schwarz inequality that a homogeneous and positive measure on a $C^*$-algebra is always normalizable.
\end{rem}

\begin{ex}
The canonical trace used by Banica in \cite{Ban2} is homogeneous, as well as the $\delta$-forms from \cite{BanFussCat}.
\end{ex}

Finite dimensional semisimple measured algebras can be described in term of a more concret object.

\begin{defi}
 Let $0< d_1 \leq \dots \leq d_{n}$ be some nonzero positive integers. We call a \emph{multimatrix} an element $E=(E_1,\dots, E_{n})\in \underset{\lambda=1}{\overset{n}{\bigoplus}} GL_{d_{\lambda}}(\C)$. If $\tr(-)$ is the usual trace, we denote:
 
$$\begin{aligned}
  &\Tr(E):=\underset{\lambda=1}{\overset{n}{\sum}} \tr(E_{\lambda}),  & &\tr_E:=\oplus \tr(E_{\lambda}^{-1t}-)\\
  & E^{-1}:=(E_1^{-1},\dots, E_{n}^{-1}),  & & E^{t}:=(E_1^{t},\dots, E_{n}^{t})
\end{aligned}$$
and we say that $E$ is \emph{positive} if each $E_{\lambda}$ is positive.
\end{defi}

Let us recall some well known results.

\begin{lemma}\label{Wedderburn}
 Let $A$ be a finite dimensional $C^*$-algebra. Then there exist some nonzero positive integers $0< d_1 \leq \dots \leq d_n$ such that $$A\simeq \underset{\lambda=1}{\overset{n}{\bigoplus}} M_{d_{\lambda}}(\C)$$

If $\varphi : A \to \C$ is a measure, then there exists a multimatrix $E=(E_1,\dots, E_{n})\in \underset{\lambda=1}{\overset{n}{\bigoplus}} GL_{d_{\lambda}}(\C)$ such that $\varphi=\tr_E$, and $\varphi$ is positive and faithful if and only if $E_{\lambda}$ is positive for all $ 1 \leq \lambda \leq n$.
\end{lemma}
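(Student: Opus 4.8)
The plan is to prove the two assertions separately: the decomposition is the classical structure theorem, while the description of the measure is a computation with the trace pairing. For the first part I would first observe that the $C^*$-identity forces $A$ to be semisimple. The Jacobson radical $J$ of the finite dimensional algebra $A$ is a nilpotent two-sided ideal; if $x \in J$ then $x^*x \in J$ is self-adjoint and nilpotent, and a self-adjoint nilpotent element of a $C^*$-algebra vanishes (as $\|(x^*x)^{2^k}\| = \|x^*x\|^{2^k}$), so $x^*x = 0$ and hence $x = 0$. Thus $J = 0$, and the Artin--Wedderburn theorem over the algebraically closed field $\C$ gives $A \simeq \bigoplus_{\lambda=1}^n M_{d_\lambda}(\C)$; one then reorders the blocks so that $0 < d_1 \leq \cdots \leq d_n$.

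For the second part I would use that the trace pairing $(M,x) \mapsto \tr(Mx)$ is a non-degenerate bilinear form on each $M_{d_\lambda}(\C)$, identifying the linear dual with the matrix algebra itself, so that any linear form on $A$ is uniquely $\varphi(x) = \sum_\lambda \tr(M_\lambda x_\lambda)$ for matrices $M_\lambda$. The bilinear form $\varphi \circ m$ is then an orthogonal direct sum over the blocks, hence non-degenerate if and only if each form $(x,y) \mapsto \tr(M_\lambda xy)$ on $M_{d_\lambda}(\C)$ is; the left radical of the latter is $\{x : M_\lambda x = 0\}$, which is trivial precisely when $M_\lambda \in GL_{d_\lambda}(\C)$. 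Therefore $\varphi$ is a measure if and only if every $M_\lambda$ is invertible, and setting $E_\lambda := (M_\lambda^{-1})^t$ one computes $(E_\lambda^{-1})^t = M_\lambda$, i.e. $\varphi = \tr_E$.

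Finally, for the positivity statement I would note that positivity and faithfulness of $\varphi$ are blockwise conditions, since $x^*x = \bigoplus_\lambda x_\lambda^* x_\lambda$. On the $\lambda$-th block the question reduces to showing that $\tr(N x^*x) \geq 0$ for all $x$, with equality only for $x = 0$, holds if and only if $N := (E_\lambda^{-1})^t$ is Hermitian positive-definite: testing on elements $x$ with $x^*x = vv^*$ gives $v^*Nv \geq 0$ for every vector $v$, which forces $N$ Hermitian and positive semidefinite, faithfulness upgrading this to positive-definite, while the converse follows by diagonalizing $N = UDU^*$ and reducing to $\sum_i d_i\,(U^*x^*xU)_{ii}$. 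The argument then closes by observing that for any Hermitian positive-definite $H$ one has $(H^{-1})^t = \overline{H^{-1}}$, again Hermitian positive-definite; applying this to $H = N$ and to $H = E_\lambda$ (recalling $E_\lambda = (N^{-1})^t$) shows $N$ is positive-definite exactly when $E_\lambda$ is. The only mildly delicate point is this last transpose-conjugate bookkeeping relating the positivity of $E_\lambda$ to that of $(E_\lambda^{-1})^t$; everything else is standard linear algebra.
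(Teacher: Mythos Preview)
Your argument is correct. The paper does not actually prove this lemma: it is introduced with ``Let us recall some well known results'' and stated without proof, so there is nothing to compare against---you have simply supplied the standard details that the paper omits.

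One small point worth tightening: your semisimplicity argument via the Jacobson radical gives only an \emph{algebra} isomorphism $A \simeq \bigoplus_\lambda M_{d_\lambda}(\C)$, but the positivity discussion in the second half implicitly assumes that the $*$-structure on $A$ is carried to the standard conjugate-transpose on each matrix block. This needs the isomorphism to be a $*$-isomorphism, which is not automatic from Artin--Wedderburn alone. It is of course still true (and well known) that every finite-dimensional $C^*$-algebra is $*$-isomorphic to such a direct sum---e.g.\ decompose via the minimal central projections and identify each simple summand with a full matrix algebra via any irreducible $*$-representation---but you should say a word to that effect before invoking the block-by-block positivity analysis.
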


\begin{nota}
 For a multimatrix $E\in \underset{\lambda=1}{\overset{n}{\bigoplus}} GL_{d_{\lambda}}(\C)$, we denote by $A_E$ the algebra $\underset{\lambda=1}{\overset{n}{\bigoplus}} M_{d_{\lambda}}(\C)$, and we denote by $(e_{kl,\lambda})_{kl,\lambda}$ its canonical basis.
\end{nota}

The following lemma translates the definition \ref{DefNorm} in term of multimatrices.

\begin{lemma}
 Let $E=(E_1,\dots, E_{n})\in \underset{\lambda=1}{\overset{n}{\bigoplus}} GL_{d_{\lambda}^E}(\C)$ be a multimatrix. Then:
\begin{enumerate}
 \item $(A_E, \tr_E)$ is homogeneous if and only if $\tr(E_{\lambda})=\tr (E_{\mu})\neq 0$, for all $ \lambda, \mu=1,\dots, n$
 \item $(A_E, \tr_E)$ is normalized if and only if $\Tr(E^{-1})=\tr(E_{\lambda})$, for all $ \lambda=1,\dots, n$
 \item $(A_E, \tr_E)$ is normalizable if and only if there exists $\xi \in \C^*$ such that $\Tr((\xi E)^{-1})=\tr(\xi E_{\lambda})\neq 0$, for all $ \lambda=1,\dots, n$
\end{enumerate}
\end{lemma}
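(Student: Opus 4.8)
The whole statement reduces to computing the auxiliary objects $\tilde\delta$ and $\tilde\varphi$ explicitly on the canonical matrix-unit basis $(e_{kl,\lambda})$, after which each of the three equivalences is a short formal consequence. The plan is first to record the Gram matrix of the bilinear form $\varphi\circ m$ in this basis. Since $\tr_E(e_{kl,\lambda})=(E_\lambda^{-1})_{kl}$ and $e_{kl,\lambda}e_{pq,\mu}=\delta_{\lambda\mu}\delta_{lp}e_{kq,\lambda}$, one gets $(\varphi\circ m)(e_{kl,\lambda}\otimes e_{pq,\mu})=\delta_{\lambda\mu}\delta_{lp}(E_\lambda^{-1})_{kq}$, so the form is block-diagonal in the index $\lambda$ and non-degenerate on each block (because $E_\lambda$ is invertible), as it must be.

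Next I would compute $\tilde\delta$ as the copairing inverse to this form, that is, the element $\tilde\delta(1)\in A_E\otimes A_E$ whose coefficient tensor is the inverse of the Gram matrix, block by block. A direct inversion (the one genuinely computational step) gives
$$\tilde\delta(1)=\sum_{\lambda=1}^{n}\sum_{p,q,r}(E_\lambda)_{qr}\,e_{pq,\lambda}\otimes e_{rp,\lambda}.$$
Substituting this into $\tilde\varphi=(\varphi\circ m)\circ(m\otimes\id_A)\circ(\id_A\otimes\tilde\delta)$ and using $e_{pq,\lambda}e_{rp,\lambda}=\delta_{qr}e_{pp,\lambda}$ together with $\sum_p e_{pp,\lambda}=1_\lambda$ (the unit of the $\lambda$-th block) collapses the sums to the key identity
$$\tilde\varphi(x)=\sum_{\lambda=1}^{n}\tr(E_\lambda)\,\varphi(x\,1_\lambda),\qquad x\in A_E.$$

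With this formula in hand, parts (1) and (2) are immediate. Writing $\varphi^{(\lambda)}(x):=\varphi(x\,1_\lambda)$, the forms $\varphi^{(1)},\dots,\varphi^{(n)}$ are supported on distinct blocks and each is nonzero, hence linearly independent; since $\varphi=\sum_\lambda\varphi^{(\lambda)}$, the equality $\tilde\varphi=\lambda_A\varphi$ holds iff $\tr(E_\lambda)=\lambda_A$ for every $\lambda$, which gives (1) after imposing $\lambda_A\neq 0$. Likewise $\tilde\varphi=\varphi(1_A)\varphi$ iff $\tr(E_\lambda)=\varphi(1_A)$ for every $\lambda$, and the computation $\varphi(1_A)=\tr_E(1_A)=\sum_\lambda\tr(E_\lambda^{-1t})=\Tr(E^{-1})$ turns this into (2).

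For (3) I would exploit the scaling behaviour $\tr_{\xi E}=\xi^{-1}\tr_E$ for $\xi\in\C^*$: the condition \emph{$\Tr((\xi E)^{-1})=\tr(\xi E_\lambda)\neq 0$ for all $\lambda$} is exactly the statement that $(A_{\xi E},\tr_{\xi E})$ is normalized with nonzero value, by part (2) applied to $\xi E$. If $(A_E,\tr_E)$ is normalizable then, by (1), the common value $c:=\tr(E_\lambda)$ is nonzero and $\Tr(E^{-1})=\varphi(1_A)\neq 0$, so one may pick a square root $\xi\in\C^*$ of $\Tr(E^{-1})/c$; then $\tr(\xi E_\lambda)=\xi c=\xi^{-1}\Tr(E^{-1})=\Tr((\xi E)^{-1})\neq 0$, giving the desired $\xi$. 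Conversely such a $\xi$ forces all $\tr(E_\lambda)$ equal and nonzero (homogeneity) and $\Tr(E^{-1})=\xi\,\Tr((\xi E)^{-1})\neq 0$ (so $\varphi(1_A)\neq 0$), i.e. normalizability. The only real obstacle is the explicit inversion of the Gram matrix in the second step, which is pure index bookkeeping; once the formula for $\tilde\varphi$ is established, everything else is formal.
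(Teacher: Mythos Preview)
Your proposal is correct and follows essentially the same route as the paper: both compute the copairing $\tilde\delta(1)=\sum_{\lambda,k,l,r}(E_\lambda)_{lr}\,e_{kl,\lambda}\otimes e_{rk,\lambda}$ explicitly and deduce $\tilde\varphi(e_{kl,\lambda})=\tr(E_\lambda)\,\varphi(e_{kl,\lambda})$, from which (1)--(3) follow; your global formula $\tilde\varphi(x)=\sum_\lambda\tr(E_\lambda)\,\varphi(x\,1_\lambda)$ is just a repackaging of the paper's basis-wise identity. The only difference is that for (3) the paper simply writes ``immediate'' while you spell out the square-root choice of $\xi$, which is fine and indeed clarifies why the equivalence holds.
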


\begin{proof}
The linear map $\tilde{\delta} : \C \to A\otimes A$ is given by $$\tilde{\delta}(1)=\sum_{\lambda=1}^{n} \sum_{k,l,r=1}^{d_{\lambda}^E} E_{lr,\lambda} e_{kl,\lambda}\otimes e_{rk,\lambda}.$$ Then $\tilde{\varphi}$ is is given by $$\tilde{\varphi}(e_{kl,\lambda})= \tr(E_\lambda) E^{-1}_{kl,\lambda}=\tr(E_\lambda)\varphi(e_{kl,\lambda})$$ Hence, $\tilde{\varphi}$ coincides with $\tr_E$ up to a nonzero scalar if and only if $E$ is homogeneous, which proves the first claim. The second claim follows from $$\tr_E(1_A)=\Tr(E^{-1}).$$ The last claim is now immediate.
\end{proof}

From now, we say that a multimatrix $E$, as well as the induced measure $\tr_E$, is homogeneous (resp. normalized, normalizable) if the measured algebra $(A_E, \tr_E)$ is homogeneous (resp. normalized, normalizable).

\subsection{The Quantum automorphism group $\Aaut(A,\varphi)$}

We can now recall the construction of the quantum automorphism group $\Aaut(A,\varphi)$ for a finite dimensional, semisimple, measured algebra $(A, \varphi)$ from \cite{Wang1}.

\begin{prop}
 Let $(A,\varphi)=(A_E, \tr_E)$ be a finite dimensional, semisimple, measured algebra and let $(e_{ij,\lambda})_{(ij,\lambda)}$ be its canonical basis. The quantum automorphism group $\Aaut(A,\varphi)$ is defined as follows. As an algebra, $\Aaut(A,\varphi)$ is the universal algebra with generators $X_{kl,\mu}^{ij,\lambda}$ $(1\leq \lambda, \mu \leq n$, $1\leq i,j \leq d_{\lambda}$, $1\leq k,l\leq d_{\mu})$ submitted to the relations

\begin{align*}
&\sum_{q=1}^{d_{\nu}} X_{ij,\lambda}^{rq,\nu}X_{kl,\mu}^{qs,\nu}=\delta_{\lambda \mu} \delta_{jk} X_{il,\mu}^{rs,\nu}, & &\sum_{\mu=1}^{n}\sum_{k=1}^{d_{\mu}} X_{kk,\mu}^{ij,\lambda}=\delta_{ij},\\
&\sum_{\mu=1}^{n}\sum_{k,l=1}^{d_{\mu}}E^{-1}_{kl,\mu} X_{ij,\lambda}^{kl,\mu}=E^{-1}_{ij,\lambda}, & &\sum_{r,s=1}^{d_{\lambda}} E_{rs,\lambda} X_{ir,\lambda}^{kp,\mu}X_{sj,\lambda}^{ql,\nu}=\delta_{\mu \nu}E_{pq,\mu}X_{ij,\lambda}^{kl,\mu}.
\end{align*}

It has a natural Hopf algebra structure given by 

$$\Delta (X_{ij,\lambda}^{kl,\mu})=\sum_{\nu=1}^{n} \sum_{p,q=1}^{d_{\nu}} X_{pq,\nu}^{kl,\mu} \otimes X_{ij,\lambda}^{pq,\nu}, \ \varepsilon (X_{ij,\lambda}^{kl,\mu})= \delta_{ik}\delta_{jl}\delta_{\lambda \mu}, \ S(X_{ij,\lambda}^{kl,\mu})=\sum_{r=1}^{d_{\lambda}}\sum_{s=1}^{d_{\mu}} E^{-1}_{rj,\lambda} E_{ls,\mu} X_{sk,\mu}^{ri,\lambda}$$
and the algebra map $\alpha_A: A \to A\otimes \Aaut (A,\varphi)$ defined by  $$\alpha_A(e_{ij,\lambda})=\sum_{\mu=1}^{n}\sum_{p,q=1}^{d_{\mu}} e_{pq,\mu}\otimes X^{pq,\mu}_{ij,\lambda}$$ is a coaction on $A$ such that $\varphi$ is equivariant.

If $(H,\alpha)$ is a Hopf algebra coacting on $(A, \varphi)$ with an algebra morphism $\alpha : (A,\varphi) \to (A,\varphi)\otimes H$, then there exists a Hopf algebra morphism $f:\Aaut(A,\varphi) \to H$ such that $(f\otimes \id_A)\circ \alpha_A=\alpha$.

If moreover $E$ is positive, $\Aaut(A,\varphi)$ is a compact Hopf algebra for the $*$-structure $$(X_{kl,\mu}^{ij,\lambda})^* =X_{lk,\mu}^{ji,\lambda}.$$ Then $\alpha_A$ is a $*$-morphism and if $H$ is a compact Hopf algebra, $f$ is also a $*$-morphism.
\end{prop}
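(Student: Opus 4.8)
The plan is to read each of the four families of defining relations as the coordinate form of one structural requirement on a coaction, and then to verify the Hopf and $\ast$-axioms directly on the generators. To set up the dictionary, take any linear map $\alpha\colon A\to A\otimes H$ into an algebra $H$ and write $\alpha(e_{ij,\lambda})=\sum_{\mu,p,q}e_{pq,\mu}\otimes a_{ij,\lambda}^{pq,\mu}$. Expanding $\alpha(e_{ij,\lambda}e_{kl,\mu})=\alpha(e_{ij,\lambda})\alpha(e_{kl,\mu})$ and $\alpha(1_A)=1_A\otimes 1_H$, and comparing coefficients in the canonical basis, shows that the first two families hold for the $a$'s if and only if $\alpha$ is a unital algebra morphism. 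Since $\varphi(e_{ij,\lambda})=E^{-1}_{ij,\lambda}$, the equivariance condition $(\varphi\otimes\id)\alpha=\varphi(\,\cdot\,)1_H$ is exactly the third family. The fourth family encodes the compatibility of $\alpha$ with the copairing $\tilde\delta$ dual to $\varphi\circ m$, equivalently the $E$-orthogonality of the coefficient matrix; it is this relation that will be responsible for the antipode. Applying the dictionary to the universal generators $X$ themselves, which satisfy all four families by construction, shows at once that $\alpha_A$ is a well-defined $\varphi$-equivariant algebra morphism, and that it is a coaction, $(\id\otimes\Delta)\alpha_A=(\alpha_A\otimes\id)\alpha_A$ and $(\id\otimes\varepsilon)\alpha_A=\id$, is a direct check against the stated formulas for $\Delta$ and $\varepsilon$.

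Next I would establish the bialgebra structure: one checks that the formulas for $\Delta$ and $\varepsilon$ respect the four families, so that they extend to algebra morphisms $\Aaut(A,\varphi)\to\Aaut(A,\varphi)\otimes\Aaut(A,\varphi)$ and $\Aaut(A,\varphi)\to\C$, after which coassociativity and counitality are verified on generators. The delicate point, which I expect to be the main obstacle, is the antipode. One must first show that the given $E$-twisted ``transpose'' formula for $S$ respects each relation after reversal of products, i.e.\ that $S$ is a well-defined algebra anti-morphism; this is precisely where the fourth family is indispensable, since it supplies the $E$-orthogonality that turns the $E$-twist into a genuine inverse. Then one verifies the antipode axioms $m(S\otimes\id)\Delta=\eta\varepsilon=m(\id\otimes S)\Delta$ on the generators, which after index bookkeeping reduces to the first and fourth families together with the statement that the coefficient matrix and its $E$-twist are two-sided inverses.

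For the universal property, given a Hopf algebra $H$ with an algebra coaction $\alpha\colon A\to A\otimes H$ preserving $\varphi$, I would write its coefficients $a_{ij,\lambda}^{pq,\mu}$ as above. By the dictionary the $a$'s satisfy the first three families automatically, and the fourth follows from the invariance of the non-degenerate pairing $\varphi\circ m$ (a consequence of multiplicativity and equivariance) together with the existence of the antipode of $H$, which forces the coefficient matrix to be $E$-orthogonal. Hence, by the universal property of an algebra presented by generators and relations, there is a unique algebra morphism $f\colon\Aaut(A,\varphi)\to H$ with $f(X_{ij,\lambda}^{pq,\mu})=a_{ij,\lambda}^{pq,\mu}$, and by construction $(f\otimes\id_A)\circ\alpha_A=\alpha$. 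Comparing the two coactions forces $f$ to intertwine the comultiplications and counits, so $f$ is a morphism of bialgebras, hence of Hopf algebras; uniqueness is clear since the $X$'s generate.

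Finally, in the positive case I would treat the $\ast$-structure and compactness. One checks that $(X_{kl,\mu}^{ij,\lambda})^\ast=X_{lk,\mu}^{ji,\lambda}$ defines an antilinear algebra anti-involution compatible with the four relations, positivity of $E$ being used here so that $E$ admits a positive square root and the relations are stable under $\ast$, and that $\Delta$ is a $\ast$-homomorphism; this makes $\Aaut(A,\varphi)$ a Hopf $\ast$-algebra. A direct substitution shows $\alpha_A(e_{ij,\lambda}^\ast)=\alpha_A(e_{ij,\lambda})^\ast$, so $\alpha_A$ is a $\ast$-morphism, and the same coefficient comparison as above shows that if $H$ is a compact Hopf $\ast$-algebra and $\alpha$ is a $\ast$-coaction, the induced $f$ is a $\ast$-morphism. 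To conclude compactness it remains to exhibit, for the fundamental comodule $A$, a matrix $K$ built from the positive square root of $E$ that conjugates the coefficient matrix to a unitary, using the inner product attached by the positive faithful state $\varphi$; cosemisimplicity then propagates unitarizability to every finite-dimensional comodule.
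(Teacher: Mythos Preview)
The paper does not prove this proposition: it is presented as a recall of Wang's construction, with a citation to \cite{Wang1}, and is followed immediately by examples rather than a proof. So there is no ``paper's own proof'' to compare against.

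Your outline is correct and is essentially the standard verification one would carry out. Two minor remarks. First, your reading of the four families is accurate: the first two encode that $\alpha$ is a unital algebra map, the third is $\varphi$-equivariance, and the fourth is most cleanly interpreted as colinearity of the copairing $\tilde\delta:\C\to A\otimes A$ (equivalently, the $E$-orthogonality you mention); in the universal-property step this last relation is indeed the only one that genuinely requires the antipode of $H$, since invertibility of the coefficient matrix is what lets you pass from invariance of $\varphi\circ m$ to invariance of its dual. Second, for compactness, once $E^{1/2}$ conjugates the fundamental coefficient matrix to a unitary, the passage to all finite-dimensional comodules is not quite ``cosemisimplicity propagates unitarizability'' (you have not yet shown cosemisimplicity); rather, observe that the given $\ast$ makes the fundamental comodule self-conjugate, so every simple comodule embeds in a tensor power of a unitarizable one and is therefore unitarizable.
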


\begin{ex}\label{ExSO3}
 \begin{enumerate}
\item If $X_n$ is the set consisting of $n$ distinct points and $\psi$ is the uniform probability measure on $X_n$, then $\Aaut(C(X_n),\psi)$ is the quantum permutation group on $n$ points, see \cite{Wang1}.
\item Let $(A,\varphi)=(M_2(\C), \tr)$. Then $\Aaut(A,\varphi)\simeq \mathcal{O}(PSL_2(\C)) \simeq \mathcal{O}(SO_3(\C))$. See \cite{Ban2, BichNat}.
\item\label{ExSOq3} Let $q\in \C^*$ and $$F_q:=\left(\begin{array}{cc}
q^{-1} & 0 \\
0 & q 
\end{array}\right)$$ Denote $\tr_q:=\tr_{F_q}$. Then we have $\Aaut(M_2(\C),\tr_q) \simeq \mathcal{O}(SO_{q^{1/2}}(3))$. 
\end{enumerate}
\end{ex}

\begin{rem}\label{RemNorm}
\begin{enumerate}
 \item  Let $(A_E,\tr_E)$ be a finite dimensional, semisimple, measured algebra, where $E=(E_1,\dots,E_{\lambda_0},\dots, E_{n})$. Then we have a Hopf algebra surjection $$\Aaut(A_E,\tr_E) \to \Aaut(A_{E_{\lambda_0}},\tr_{E_{\lambda_0}})$$ given by $$X^{ij,\lambda}_{kl,\mu} \mapsto \left\{ \begin{array}{cc}
\delta_{\lambda \mu} X^{ij}_{kl} & \text{when } \lambda=\lambda_0 \\
\delta_{\lambda \mu} \delta_{ik} \delta_{jl} & \text{otherwise} 
\end{array}\right. $$
 \item In view of the relations defining $\Aaut(A_E,\tr_E)$, we have $\Aaut(A_E,\tr_E)=\Aaut(A_{\xi E},\tr_{\xi E})$ for all $\xi \in \C^*$. Then if $E\in \underset{\lambda=1}{\overset{n}{\bigoplus}} GL_{d_{\lambda}^E}(\C)$ is normalizable, there exists $F\in \underset{\lambda=1}{\overset{n}{\bigoplus}} GL_{d_{\lambda}^E}(\C)$ normalized such that $A_E=A_F$ and $\Aaut(A_E,\tr_E)=\Aaut(A_E,\tr_F)$. 
\end{enumerate}
\end{rem}

According to the properties of the trace, we have the following result:

\begin{prop}
 Let $E, P \in \underset{\lambda=1}{\overset{n}{\bigoplus}} GL_{d_{\lambda}}(\C)$ be some multimatrices. Then $A_E=A_{PEP^{-1}}$ and we have a Hopf algebra isomorphism $$\Aaut(A_E, \tr_E) \simeq \Aaut(A_E, \tr_{PEP^{-1}})$$
\end{prop}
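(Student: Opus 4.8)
The plan is to exhibit an isomorphism of measured algebras between $(A_E, \tr_E)$ and $(A_E, \tr_{PEP^{-1}})$, and then to transport the universal coaction along it and invoke the universal property of $\Aaut$. The equality $A_E = A_{PEP^{-1}}$ is immediate, since $A_E = \bigoplus_\lambda M_{d_\lambda}(\C)$ depends only on the sizes $d_\lambda$, which conjugation leaves unchanged.

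First I would pin down the correct algebra automorphism. For a multimatrix $Q$, write $\theta_Q$ for the conjugation automorphism of $A_E$ acting blockwise by $a_\lambda \mapsto Q_\lambda a_\lambda Q_\lambda^{-1}$. Using the defining formula $\tr_E = \oplus\, \tr(E_\lambda^{-1t}\, \cdot\,)$ together with cyclicity of the trace, one computes $\tr_E \circ \theta_Q = \tr_F$ with $F_\lambda = Q_\lambda^t E_\lambda (Q_\lambda^t)^{-1}$. The one genuinely content-bearing point here is the transpose: choosing $Q = P^t$ yields $F = PEP^{-1}$, so that $\theta_{P^t}$ is an isomorphism of measured algebras $(A_E, \tr_{PEP^{-1}}) \to (A_E, \tr_E)$, i.e. $\tr_E \circ \theta_{P^t} = \tr_{PEP^{-1}}$.

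Next I would show that an isomorphism of measured algebras induces an isomorphism of the associated quantum automorphism groups. Writing $H = \Aaut(A_E, \tr_E)$ with universal coaction $\alpha_E$, set $\gamma := (\theta_{P^t}^{-1} \otimes \id_H)\circ \alpha_E \circ \theta_{P^t}$. As a composite of algebra maps, $\gamma$ is an algebra map; conjugating the coaction $\alpha_E$ by the algebra automorphism $\theta_{P^t}$ preserves coassociativity and counitality, so $\gamma$ is a coaction of $H$ on $A_E$; and the intertwining relation $\tr_{PEP^{-1}} = \tr_E \circ \theta_{P^t}$ makes $\gamma$ equivariant for $\tr_{PEP^{-1}}$. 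The universal property of $\Aaut(A_E, \tr_{PEP^{-1}})$ then supplies a unique Hopf algebra morphism $g$ from it to $H$ compatible with the coactions.

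Finally I would run the same construction with $\theta_{P^t}^{-1} = \theta_{(P^t)^{-1}}$ to produce a Hopf algebra morphism $g'$ in the opposite direction, and conclude that $g$ and $g'$ are mutually inverse from the uniqueness clause of the universal property: the composites $gg'$ and $g'g$ intertwine the universal coactions with themselves and hence must equal the respective identities. I expect no serious obstacle beyond keeping track of the transpose in the first step and of the tensor-factor conventions in the coaction bookkeeping; the uniqueness invoked at the end is automatic, since a morphism out of $\Aaut$ is determined by its values on the generators $X_{ij,\lambda}^{kl,\mu}$, which the coaction pins down.
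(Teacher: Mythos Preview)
Your proof is correct and follows essentially the same route as the paper: the paper's one-line argument invokes the universal property of $\Aaut$ with respect to the base change $M \mapsto P^t M P^{-1t}$, which is precisely your conjugation automorphism $\theta_{P^t}$, together with the same trace identity $\tr_E \circ \theta_{P^t} = \tr_{PEP^{-1}}$. Your write-up simply makes explicit the transport of the coaction and the uniqueness argument for mutual inverses that the paper leaves implicit.
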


\begin{proof}
 The first assertion is obvious, and the rest follows from the universal property of the quantum automorphism group, with respect to the base change induced by the linear map $M\mapsto P^tMP^{-1t}$ and the fact that $\tr_E(P^tMP^{-1t})=\tr_{PEP^{-1}}(M)$.
\end{proof}

\section{$SO(3)$-deformation: the compact case}

This section is devoted to the proof of Theorem \ref{ClassTh} which classifies compact $SO(3)$-deformations. $\ $ \\ Let us describe the fusion semi-ring $\R^+(\gO(SO(3)))$: there exists a family of non-isomorphic simple comodules $(W_n)_{n\in \N}$ such that: $$W_0=\C, \quad W_n\otimes W_1 \simeq W_1 \otimes W_n \simeq W_{n-1} \oplus W_n \oplus W_{n+1}, \quad \dim (W_n) = 2n+1, \ \forall n \in \N^*$$
We aim to prove the following proposition:

\begin{prop}\label{PropAlgebre}
 Let $H$ be a compact $SO(3)$-deformation with simple comodules $(W_n)_{n\in \N}$ as above. Put $A=\C \oplus W_1$. Then there exist $H$-colinear maps 
\begin{align*}
 &A\otimes A \to A, &\C \to A,  & &\varphi : A \to \C
\end{align*}
making $(A,\varphi)$ into a measured $H$-comodule algebra. 

Moreover, there exists an antilinear map $*:A\to A$ making $A$ into a $*$-algebra and such that 
\begin{enumerate}
 \item $\varphi$ is positive, so that $A$ is a $C^*$-algebra,
 \item $(A,\varphi)$ is a normalizable measured $C^*$-algebra,
 \item $(A,\varphi)$ is a measured $H$-comodule $*$-algebra
\end{enumerate}
\end{prop}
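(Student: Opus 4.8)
The plan is to produce every structure map as an $H$-colinear morphism dictated by the fusion rules, leaving associativity as the only genuine computation. Since $\R^+(H)\simeq\R^+(\gO(SO(3)))$ we have $W_1\otimes W_1\simeq W_0\oplus W_1\oplus W_2$, so cosemisimplicity gives $\dim\Hom(W_1\otimes W_1,W_0)=1=\dim\Hom(W_1\otimes W_1,W_1)$; I fix generators $\beta:W_1\otimes W_1\to W_0=\C$ and $\mu:W_1\otimes W_1\to W_1$. The colinear inclusion $W_0\hookrightarrow A$ supplies the unit $\eta$, and the colinear projection $A\to W_0$ supplies (up to scale) the candidate measure $\varphi$, which vanishes on $W_1$; note that $\Hom(W_1\otimes W_1,W_0)\neq 0$ forces $W_1\simeq W_1^*$, so $W_1$ is self-dual and $\beta$ is a non-degenerate invariant pairing.

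Next I would define $m:A\otimes A\to A$ by declaring $\eta$ to be a two-sided unit and setting $m|_{W_1\otimes W_1}=a\,(\C\hookrightarrow A)\circ\beta+b\,(W_1\hookrightarrow A)\circ\mu$ for scalars $a,b$ to be fixed; the $W_2$-isotypic part is killed automatically since $A$ contains no $W_2$. Colinearity of $m$ is then clear, and the content is associativity on $W_1^{\otimes 3}$. I would project $m(m\otimes\id)=m(\id\otimes m)$ onto the two relevant isotypic components. On the $W_0$-component $\Hom(W_1^{\otimes 3},W_0)$ is one-dimensional, so associativity reduces to the single cyclicity identity $\beta\circ(\mu\otimes\id)=\beta\circ(\id\otimes\mu)$, which I would secure (or arrange by normalising $\mu$) using self-duality of $W_1$. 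On the $W_1$-component $\Hom(W_1^{\otimes 3},W_1)$ is three-dimensional, and associativity becomes $a\,(\beta_{12}-\beta_{23})+b^2\,(\mu(\mu\otimes\id)-\mu(\id\otimes\mu))=0$, where $\beta_{12},\beta_{23}$ are the two contractions into $W_1$.

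The decisive step, which I expect to be the main obstacle, is the triple-product identity $\mu(\mu\otimes\id)-\mu(\id\otimes\mu)=\kappa\,(\beta_{12}-\beta_{23})$ in $\Hom(W_1^{\otimes 3},W_1)$ for some $\kappa\neq 0$: the associator defect of $\mu$ must have no ``outer'' $\beta(x,z)y$ component and opposite coefficients on the two surviving basis morphisms. Since no braiding is available I cannot invoke (anti)symmetry of $\mu$ via a flip; instead I would pin down the structure constants by analysing the three orthogonal idempotents $P_0,P_1,P_2\in\End(W_1\otimes W_1)$ projecting onto $W_0,W_1,W_2$, expressing $P_0,P_1$ in terms of the duality data and of $\mu$ and using the rigidity isomorphisms that identify the summands of $\Hom(W_1^{\otimes 3},W_1)$. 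The relations $P_iP_j=\delta_{ij}P_i$ and $\sum_iP_i=\id$ translate into finitely many scalar equations governed by a single parameter, the quantum dimension of $W_1$, and these yield both the cyclicity condition and the triple-product identity; the value of this parameter is what links $H$ to $SO_q(3)$ as in Example \ref{ExSO3}. Granting the identity, associativity is solved by any $b\neq 0$ with $a=-\kappa b^2$, and $\varphi\circ m$ is non-degenerate because its restriction to $W_1\otimes W_1$ equals $a\,\varphi(1_A)\,\beta$ with $\beta$ non-degenerate, so $\varphi$ is a measure.

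Finally I would build the $*$-structure from compactness of $H$: the category $\Com(H)$ is unitary, so $W_1$ carries a conjugate $\overline{W_1}$ with an antilinear colinear isomorphism, which together with $W_1\simeq W_1^*$ yields an antilinear involution $*:A\to A$ (complex conjugation on $W_0$, the conjugation map on $W_1$). I would verify $(xy)^*=y^*x^*$ on the colinear generators and normalise $\beta$ to be the positive invariant form, so that $*$ is an involution, $\varphi$ is positive and faithful, and $A$ becomes a $4$-dimensional $C^*$-algebra; which of the two possible types ($\C^4$ or $M_2(\C)$) occurs is decided by the structure constants of $H$, both cases being genuine. Homogeneity is essentially automatic: $\tilde{\varphi}$ is a composite of colinear maps, hence a colinear functional, hence a scalar multiple of the unique such functional $\varphi$, with $\tilde{\varphi}(1_A)>0$ giving $\lambda_A\neq 0$; since $\varphi$ is positive with $\varphi(1_A)\neq 0$, the Remark following Definition \ref{DefNorm} shows $(A,\varphi)$ is normalizable, which establishes the three listed assertions.
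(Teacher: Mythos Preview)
Your outline matches the paper's strategy closely, but two points are genuine gaps rather than routine verifications.

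First, the cyclicity identity $\beta\circ(\mu\otimes\id)=\beta\circ(\id\otimes\mu)$ cannot be ``arranged by normalising $\mu$''. Since $\Hom(W_1^{\otimes3},\C)$ is one-dimensional you only get $\beta\circ(\mu\otimes\id)=\omega\,\beta\circ(\id\otimes\mu)$ for some $\omega\in\C^*$, and this $\omega$ is invariant under any rescaling of $\beta$ or $\mu$. The paper devotes most of Lemma~\ref{prop1} to proving first that $\omega^3=1$ (via repeated Frobenius reciprocity isomorphisms between $\Hom(W^{\otimes 2},W)$, $\Hom(W,W^{\otimes 2})$ and $\Hom(W^{\otimes 3},\C)$), and then that $\omega=1$; the last step uses compactness through Lemma~\ref{LemmeTau}, which forces $\tau=\tr(FF^*)>2$ and rules out the solution $\tau=2R$ that would otherwise allow $\omega\neq 1$. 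This is essential: in the non-compact setting (Section~\ref{GenCase}) there exist $SO(3)$-deformations of type~$\mathbf{II}$ with $\omega$ a primitive cube root of unity, for which no associative product on $\C\oplus W_1$ is obtained. Your idempotent scheme runs into the same wall: the relations $P_iP_j=\delta_{ij}P_i$ and $\sum_iP_i=\id$ live entirely in $\End(W_1^{\otimes 2})$ and do not by themselves see $\omega$, which is a ratio in $\Hom(W_1^{\otimes 3},\C)$. The same lemma is also where your $\kappa$-relation is actually established, together with $\tau\neq 1$ (so that $\kappa\neq 0$).

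Second, $A$ is not $4$-dimensional in general. A semi-ring isomorphism $\R^+(H)\simeq\R^+(\gO(SO(3)))$ fixes only the fusion rules, not the underlying vector-space dimensions; $\dim W_1$ can be any integer $\geq 3$ (for instance $\dim W_1=n^2-1$ for $\Aaut(M_n(\C),\tr)$), so the resulting $C^*$-algebra need not be $\C^4$ or $M_2(\C)$. Relatedly, the $*$-structure step hides a further sign obstruction: the antilinear conjugation on $W_1$ is governed by $F\in GL_n(\C)$ with $F\overline F=R\,I_n$, $R\in\{\pm1\}$, and one must prove $R=1$ (this is Lemma~\ref{LemmInvol}). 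The compatibility $(xy)^*=y^*x^*$ then requires $C(v^*\otimes w^*)=C(w\otimes v)^*$, which the paper secures via a positivity argument using $e(w\otimes w^*)>0$; none of this follows automatically from ``the category is unitary''.
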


After this paper was written, T. Banica informed us that Grossman and Snyder proved a related result (Theorem 3.4) in \cite{GrossSny}, working in arbitrary tensor $C^*$-categories with duals. More precisely, the first part of Proposition \ref{PropAlgebre} is a special case of Theorem 3.4 in \cite{GrossSny}. It is probably possible, although not immediate, to recover the full structure described in \mbox{Proposition \ref{PropAlgebre}} (\mbox{$*$-involution} and positivity of $\varphi$) from Theorem 3.4 in \cite{GrossSny}. Our independant proof is more concrete, and also has the merit that it brings some information in the non-compact case, see section \ref{GenCase}. We thank T. Banica for informing us about the paper \cite{GrossSny}.

Here, the proof of Proposition \ref{PropAlgebre} is the consequence of two lemmas. The different proofs being slightly technical, it seems useful to describe the example of $\gO(SO(3))\simeq \Aaut(M_2(\C), \tr)$, following Proposition 3.2 in \cite{BichNat}, which motivate the Lemmas \ref{prop1} and \ref{LemmInvol} below. 

At first, the reader can skip the proofs of Lemmas \ref{prop1} and \ref{LemmInvol} and go to the end of this section to see the construction leading to Proposition \ref{PropAlgebre} and the proof of Theorem \ref{ClassTh}.

\begin{ex}
Consider $(A,\varphi)=(M_2(\C), \tr)$ and the linear basis of $A$ consisting of the unit quaternions
\begin{equation*} e_0 = \begin{pmatrix}1 & 0\\0 & 1\end{pmatrix},
 e_1 = \begin{pmatrix}-i & 0\\0 & i\end{pmatrix},
e_2 = \begin{pmatrix}0 & -1\\1 & 0\end{pmatrix}, e_3 =
\begin{pmatrix}0 & i\\ i & 0\end{pmatrix}.
\end{equation*} 
These satisfy the following multiplication rules:
\begin{equation*}\label{quater}e_k^2 = -e_0, \; 1\leq k \leq 3, \quad e_1e_2 = e_3, \quad e_2e_3 = e_1,
\quad e_3e_1 = e_2. \end{equation*} 

We introduce some notations: for $1\leq k \neq l \leq 3$, $\langle kl\rangle \in \{ 1, 2, 3 \}$ is such that $\{ k, l \} \cup \{\langle kl\rangle\} = \{ 1, 2, 3 \}$, and let $\varepsilon_{kl} \in \{ \pm 1 \}$ be such that $e_k e_l=\varepsilon_{kl}e_{\langle kl\rangle}$. In particular, $\varepsilon_{kl}=-\varepsilon_{lk}$.

$\{ e_1, e_2, e_3 \}$ is a basis of ker$(\tr)$ which can be identified with the simple comodule $W=W_1$ in $\R^+(\gO(SO(3)))$, and we have the decomposition $$M_2(\C)= \C.e_0 \oplus W$$

Define the following colinear maps:

$$\begin{aligned}
  & e : W\otimes W \to \C,   & & e(e_k\otimes e_l)=-2 \ \delta_{kl}\\
  & e^* : \C \to W\otimes W, & & e^*(1)=-\frac{1}{2}\sum_{k=1}^3 e_k\otimes e_k\\
  & C : W\otimes W \to W,   & & C(e_k\otimes e_l)=(1-\delta_{kl})\varepsilon_{kl}e_{\langle kl\rangle}\\
  & D : W \to W\otimes W, & & D(e_k)= \sum_{p\neq k} \varepsilon_{kp}e_{\langle kp\rangle}\otimes e_p
 \end{aligned}$$

This maps satisfy some (compatibility) relations which are described in the following Lemma \ref{prop1}, with $\tau=3$ and $R=1$, and the multiplication in $M_2(\C)= \C.e_0 \oplus W$ decomposes into $$m(A\otimes B)=\frac{1}{2} e(A\otimes B) e_0 \oplus C(A\otimes B) \ \ \ \forall \ A,B \in W$$

\end{ex}
The rigidity provided by Schur's lemma and the fusion rules of $W\otimes W$ will allow us to see that this situation essentially holds in the general case.

We begin by a lemma: 

\begin{lemma}\label{LemmeTau}
 Let $F\in M_n(\C)$, $n\geq 3$, be such that $F\overline{F}=\pm I_n$. Then $\tr(FF^*)>2$.
\end{lemma}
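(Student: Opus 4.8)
The plan is to reduce the statement to a determinant computation together with the arithmetic--geometric mean inequality applied to the singular values of $F$. First I would observe that the hypothesis $F\overline{F}=\pm I_n$ forces $F$ to be invertible, and taking determinants on both sides gives $\det(F)\det(\overline{F})=(\pm 1)^n$. Since $\det(\overline{F})=\overline{\det(F)}$, the left-hand side equals $|\det(F)|^2\geq 0$; hence $(\pm 1)^n\geq 0$, which in the minus-sign case forces $n$ to be even, and in every admissible case yields $|\det(F)|^2=1$, i.e. $|\det(F)|=1$.

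Next I would bring in the singular values $\sigma_1,\dots,\sigma_n>0$ of $F$. The positive Hermitian matrix $FF^*$ has eigenvalues $\sigma_1^2,\dots,\sigma_n^2$, so that $\tr(FF^*)=\sum_{i}\sigma_i^2$, while $\prod_i \sigma_i^2=\det(FF^*)=|\det(F)|^2=1$ by the previous step. Applying the AM--GM inequality to the positive reals $\sigma_1^2,\dots,\sigma_n^2$ gives
$$\frac{1}{n}\,\tr(FF^*)=\frac{1}{n}\sum_{i=1}^n \sigma_i^2 \geq \Big(\prod_{i=1}^n \sigma_i^2\Big)^{1/n}=1,$$
so that $\tr(FF^*)\geq n\geq 3>2$, as desired.

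There is essentially no hard obstacle in this argument; the only points requiring a little care are the two standard linear-algebra facts I rely on---that $|\det(F)|=\prod_i\sigma_i$ and that $\tr(FF^*)$ is the sum of the squared singular values---and the observation that the strictness $>2$ (as opposed to $\geq 2$) is precisely what the hypothesis $n\geq 3$ buys. Indeed, for $n=2$ the choice $F=I_2$ satisfies $F\overline{F}=I_2$ with $\tr(FF^*)=2$, so the inequality is sharp and genuinely fails to be strict in dimension two; this confirms that the bound and the dimension restriction are optimally matched.
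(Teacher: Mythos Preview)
Your argument is correct and considerably more elementary than the paper's. The paper proceeds by invoking explicit normal forms from \cite{BichonVaes}: up to a unitary congruence $F\mapsto U^tFU$, a matrix with $F\overline{F}=\pm I_n$ becomes block-diagonal with $2\times 2$ blocks $\begin{pmatrix}0&\lambda_i\\ \pm\lambda_i^{-1}&0\end{pmatrix}$ (together with an identity block in the $+$ case), and then $\tr(FF^*)$ is computed termwise as $\sum(\lambda_i^2+\lambda_i^{-2})+(n-2k)$. Your route bypasses this structural classification entirely: the determinant identity $|\det F|^2=(\pm1)^n$ forces $|\det F|=1$ (and $n$ even in the minus case), after which AM--GM on the squared singular values gives $\tr(FF^*)\geq n\geq 3$. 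This is shorter, self-contained, and in fact yields the sharper bound $\tr(FF^*)\geq n$ rather than merely $>2$. The paper's approach has the compensating advantage of displaying the canonical form explicitly, which is useful background for the later construction of the $*$-involution, but for the bare inequality your proof is the cleaner one.
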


\begin{proof}
First assume that $F\overline{F}=I_n$. Then according to \cite{BichonVaes} p.724, there exists a unitary matrix $U\in M_n(\C)$ and some real numbers $0 < \lambda_1 \leq \cdots \leq \lambda_k < 1$ such that 
\begin{equation*}
U^t F U = \begin{pmatrix} 0 & D(\lambda_1,\ldots,\lambda_k) & 0 \\ D(\lambda_1,\ldots,\lambda_k)^{-1} & 0 & 0 \\ 0 & 0 & I_{n-2k} \end{pmatrix} \; .
\end{equation*}
where $D(\lambda_1,\ldots,\lambda_k)$ denotes the diagonal matrix with the $\lambda_i$ along the diagonal. In that case, $$\tr(FF^*)=\sum_{i=1}^k (\lambda_i^2+\lambda_i^{-2}) + n-2k > 2$$

Now assume that $F\overline{F}=- I_n$. Then according to \cite{BichonVaes} p.724, $4\leq n$ is even and there exists a unitary matrix $u\in U_n(\C)$ and some real numbers $0 < \lambda_1 \leq \cdots \leq \lambda_{n/2} \leq 1$ such that
 $$U^t F U=\begin{pmatrix} 0 & D(\lambda_1,\ldots,\lambda_{n/2}) \\  -D(\lambda_1,\ldots,\lambda_{n/2})^{-1} & 0 \end{pmatrix}$$
In that case, $$\tr(F^*F)=\sum_{i=1}^{n/2} \lambda_i^{-2} + \lambda_i^2 > 2$$
\end{proof}

\begin{lemma}\label{prop1}
 Let $H$ be a compact $SO(3)$-deformation, with fundamental comodule $(W,\alpha)$ endowed with an $H$-invariant inner product. Then there exist morphisms of $H$-comodules
\begin{equation}
    \label{basic}
\begin{aligned}
  & e : W\otimes W \to \C \ \ \ & C : W\otimes W \to W \\
 \end{aligned}
   \end{equation}
and some scalars $\tau > 2$, $R\in \{\pm 1\}$ such that the following compatibility relations hold (where $e^*:\C \to W\otimes W$ is the adjoint of $e$ and $D:=(\id_W\otimes C)(e^* \otimes  \id_W) : W \to W\otimes W$):
\begin{subequations}
\label{rel}
  \begin{align}
 (e\otimes \id_W)(\id_W \otimes e^*)&=R \id_W  &(\id_W \otimes e) (e^* \otimes \id_W)&=R\id_W \\
  CD&=\id_W    &ee^*&=\tau\id_\C\\
  Ce^*&=0 &eD&=0\\
  e(C\otimes \id_W)&=e(\id_W\otimes C)    &(\id_W\otimes C)(e^* \otimes\id_W)&=(C\otimes \id_W)(\id_W\otimes e^*) \\
  (\id_W\otimes D)e^*&=(D\otimes \id_W)e^*   &(\id_W\otimes e)(D\otimes \id_W)&=(e\otimes \id_W)(\id_W\otimes D) 
\end{align}
\begin{align}
  (\id_W\otimes C)(D\otimes \id_W)&=(C\otimes \id_W)( \id_W \otimes D)=R(R-\tau)^{-1}\id_{W^{\otimes 2}} +(\tau-R)^{-1}e^* e +DC \\
(\id_W\otimes D)D&=R(R-\tau)^{-1}(e^* \otimes \id_W) +R(\tau-R)^{-1}(\id_W \otimes e^*) +(D\otimes \id_W)D \\
C(\id_W\otimes C)&=(R-\tau)^{-1}(\id_W\otimes e) +(\tau-R)^{-1}(e \otimes \id_W) +C(C \otimes \id_W)
\end{align}
 \end{subequations}
\end{lemma}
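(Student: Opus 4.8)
The plan is to extract $e$ and $C$ directly from the multiplicity-free decomposition of $W\otimes W$ and to reduce every relation in \eqref{rel} either to Schur's lemma or to a short linear-algebra computation inside a Hom-space whose dimension is pinned down by the fusion rules. Since $\dim W=\dim W_1=3$, the $SO(3)$ fusion rules give
\begin{equation*}
W\otimes W \simeq \C \oplus W \oplus W_2,
\end{equation*}
so each isotypic component occurs with multiplicity one. Hence by Schur's lemma $\Hom(W\otimes W,\C)$ and $\Hom(W\otimes W,W)$ are one-dimensional, and I would take $e$ and $C$ to be nonzero elements of these two spaces; $e^*$ is then the adjoint of $e$ for the invariant inner products on $W\otimes W$ and $\C$, and $D$ is built from $C$ and $e^*$ as in the statement. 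The same decomposition shows $\End(W\otimes W)\simeq\C^3$ is three-dimensional, spanned by the three isotypic projections, and this is the structural fact driving the whole argument.

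Next I would produce the scalars $R$ and $\tau$. Using the invariant inner product I may present $W$ by a unitary multiplicative matrix $x$, and colinearity of $e$ translates into a relation of the form $x^tEx=E$ for the matrix $E$ of $e$; equivalently $\overline{x}=ExE^{-1}$, so $F:=E$ intertwines $W$ with its conjugate $\overline{W}$. Schur's lemma forces $F\overline{F}$ to be a scalar, and after rescaling $e$ I may assume $F\overline{F}=R\,I$ with $R\in\{\pm1\}$. A direct computation in the orthonormal basis then yields the snake relations $(e\otimes\id_W)(\id_W\otimes e^*)=R\,\id_W=(\id_W\otimes e)(e^*\otimes\id_W)$ together with $ee^*=\tr(FF^*)\,\id_\C=:\tau\,\id_\C$. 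Since $\dim W=3$ and $F\overline{F}=\pm I$, Lemma \ref{LemmeTau} gives $\tau>2$; in particular $\tau\neq R$, which is exactly what makes the coefficients $(\tau-R)^{-1}$ and $R(R-\tau)^{-1}$ in \eqref{rel} well defined.

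The relations $Ce^*=0$ and $eD=0$ are then automatic, as they live in $\Hom(\C,W)=0$ and $\Hom(W,\C)=0$ ($W$ being a nontrivial simple comodule). Because $\End(W)$ is one-dimensional, $CD$ is a scalar multiple of $\id_W$, and after checking $CD\neq0$ (the image of the nonzero map $D$ is a copy of $W$ on which $C$ is an isomorphism by Schur) I rescale $C$ so that $CD=\id_W$. The four ``symmetry'' relations $e(C\otimes\id_W)=e(\id_W\otimes C)$, $(\id_W\otimes C)(e^*\otimes\id_W)=(C\otimes\id_W)(\id_W\otimes e^*)$, $(\id_W\otimes D)e^*=(D\otimes\id_W)e^*$ and $(\id_W\otimes e)(D\otimes\id_W)=(e\otimes\id_W)(\id_W\otimes D)$ compare maps lying in one-dimensional Hom-spaces: the fusion rules give $W^{\otimes3}\simeq\C\oplus 3W\oplus 2W_2\oplus W_3$, so $\Hom(W^{\otimes3},\C)$ and $\Hom(W,W^{\otimes2})$ are one-dimensional, forcing each pair to be proportional; I would fix the constant to be $1$ by capping off with $e$ or $e^*$ and invoking the snake relations.

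The heart of the matter, and the step I expect to be the main obstacle, is the final block of three relations, which encode the associativity/$6j$ constraints and genuinely involve the summand $W_2$ that Schur on $W$ cannot detect. For the first of them both sides lie in $\End(W\otimes W)\simeq\C^3$, so I would expand $(\id_W\otimes C)(D\otimes\id_W)=a\,\id+b\,e^*e+c\,DC$ and determine $a,b,c$ by composing this identity on either side with $e,e^*,C,D$ and evaluating the resulting scalars via the already-established relations ($ee^*=\tau$, $CD=\id_W$, the snakes, $Ce^*=0$, $eD=0$), using $\tau\neq R$ to invert the resulting linear system; this should yield $c=1$, $b=(\tau-R)^{-1}$ and $a=R(R-\tau)^{-1}$. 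The delicate points are, first, showing that the two iterated-vertex maps $(\id_W\otimes C)(D\otimes\id_W)$ and $(C\otimes\id_W)(\id_W\otimes D)$ coincide — this is where compatibility of $e$ and $C$ with the self-duality, through the symmetry relations above, must be used, since without a braiding the two gluings are not formally equal — and second, the bookkeeping needed to pin the exact coefficients. The remaining two relations I would then deduce from the first by ``rotating'' it, namely composing with $e^*$ or $e$ in an outer leg and applying the snake relations, which transports the identity from $\End(W\otimes W)$ to $\Hom(W,W^{\otimes3})$ and $\Hom(W^{\otimes3},W)$ respectively.
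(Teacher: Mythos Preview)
Your overall architecture matches the paper's: extract $e$ from self-duality of $W$, get $\tau=\tr(FF^*)>2$ from Lemma~\ref{LemmeTau}, take $C$ from the one-dimensional $\Hom(W\otimes W,W)$, normalize so $CD=\id_W$, expand the iterated vertex in the three-dimensional $\End(W\otimes W)$, and rotate with the snake relations to get the last two identities. Two points, one minor and one essential.

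\emph{Minor.} You write $\dim W=3$, but an $SO(3)$-deformation only has the same fusion \emph{semi-ring} as $SO(3)$, not the same dimensions; for $\Aaut(A,\varphi)$ one has $\dim W=\dim A-1\geq 3$. This does not damage your argument (Lemma~\ref{LemmeTau} only needs $n\geq 3$, and the decomposition $W\otimes W\simeq\C\oplus W\oplus W_2$ comes from the semi-ring isomorphism, not from dimension counting), but the claim is wrong as written.

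\emph{Essential.} The step you describe as routine---``fix the constant to be $1$ by capping off with $e$ or $e^*$ and invoking the snake relations''---is in fact the heart of the proof and is where compactness enters. Schur gives only
\[
e(C\otimes\id_W)=\omega\,e(\id_W\otimes C),\qquad (\id_W\otimes C)(e^*\otimes\id_W)=\omega\,(C\otimes\id_W)(\id_W\otimes e^*)
\]
for some $\omega\in\C^*$, and once $e$ is normalized by $F\overline F=RI$ and $C$ by $CD=\id_W$ there is no freedom left to rescale $\omega$ away. Frobenius/snake manipulations show that the same $\omega$ governs all four symmetry relations and that $\omega^3=1$, but they \emph{cannot} force $\omega=1$: the non-compact analysis in Section~\ref{GenCase} (Lemma~\ref{prop1bis}) shows all three cube roots genuinely occur, and the paper singles out the $\omega\neq1$ case as ``type~$\mathbf{II}$''. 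The way the paper excludes this in the compact setting is to carry $\omega$ through your $\End(W\otimes W)$ expansion $B=\alpha\,\id+\beta\,e^*e+\gamma\,DC$, derive (via $eB$, $\Phi(B)$, $\Psi(B)$, and the identity $(\id\otimes B)(D\otimes\id)D=\omega(B\otimes\id)(\id\otimes D)D$) a system including $\gamma^2=\omega$, $\alpha+\tau\beta=\omega^2$, $\alpha+\omega R\gamma\beta=0$, $R\beta+\omega\gamma\alpha=0$, and then check that $\omega^2\neq1$ forces $\tau=2R$, contradicting $\tau>2$. So your plan for the last block is the right one, but you must run it \emph{with $\omega$ present}: the output of that computation is precisely what kills $\omega\neq1$ and simultaneously pins down $\alpha,\beta,\gamma$; it is not a preliminary capping argument that does so.
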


\begin{proof}
Let $(w_i)_{1\leq i \leq n}$ be an orthonormal basis of $W$ and let $(x_{ij})_{1\leq i,j \leq n}$ be the associated unitary multiplicative matrix. Recall that we write $\overline{x}=(x^*_{ij})$. From the fusion rules, we get $\dim W \geq 3$.

 We have $\overline{W} \simeq W$ by the fusion rules, hence there exist $F\in GL_n(\C)$ and $R\in \mathbb R^*$ such that $\bar x= F^{-1}xF$ and $F\overline{F}=R I_n$. Up to a nonzero real number, we can assume that $R\in \{\pm 1\}$. The map $e$ defined by $$e(w_i\otimes w_j)=\overline{F}_{ji}$$ is $H$-colinear and we have $$e^*(1)=\sum F_{ji} w_i \otimes w_j$$ and $e, e^*$ satisfy (\ref{rel}ab) for $\tau=\tr(F^*F) > 2$ according to Lemma \ref{LemmeTau}.

The fusion rules of $SO(3)$ give: $$W\otimes W \simeq \C \oplus W \oplus W^H_2 \ \ \ \ \ \emph{(FR)}$$
and there exists a nonzero (hence surjective) $H$-colinear map $C:W\otimes W \to W$. By Frobenius reciprocity, there exist isomorphisms

$$\begin{aligned}
  &\begin{array}{rl}
\Psi_1 : \Hom_H(W^{\otimes3}, \C) &\to \Hom_H(W^{\otimes 2}, W) \\
     f &\mapsto (\id_W \otimes f)(e^* \otimes \id_{W^{\otimes 2}})
\end{array}
&\begin{array}{rl}
\Psi_1^{-1} :  \Hom_H(W^{\otimes 2}, W) &\to \Hom_H(W^{\otimes3}, \C) \\
      g &\mapsto Re(\id_W\otimes g)
\end{array}
\end{aligned}$$

$$\begin{aligned} &\begin{array}{rl}
\Psi_2 : \Hom_H(W^{\otimes3}, \C) &\to \Hom_H(W^{\otimes 2}, W) \\
     f &\mapsto (f \otimes \id_W)( \id_{W^{\otimes 2}}\otimes e^* )
\end{array} &\begin{array}{rl}
\Psi_2^{-1} : \Hom_H(W^{\otimes 2}, W) &\to \Hom_H(W^{\otimes3}, \C) \\
     g &\mapsto Re(g \otimes \id_W)
\end{array}\end{aligned}$$

$$\begin{aligned} &\begin{array}{rl}
\Phi_1 : \Hom_H (W^{\otimes 2}, W) &\to \Hom_H (W, W^{\otimes 2}) \\
     f &\mapsto (f\otimes \id_W)(\id_W\otimes e^*)
\end{array} &\begin{array}{rl}
\Phi_1^{-1} : \Hom_H (W^{\otimes 2}, W) &\to \Hom_H (W, W^{\otimes 2}) \\
     g &\mapsto R(\id_W\otimes e)(g \otimes \id_W)
\end{array}\end{aligned}$$

$$\begin{aligned} &\begin{array}{rl}
\Phi_2 :\Hom_H (W^{\otimes 2}, W) &\to \Hom_H (W, W^{\otimes 2}) \\
     f &\mapsto (\id_W\otimes f)(e^* \otimes \id_W)
\end{array} &\begin{array}{rl}
\Phi_2^{-1} : \Hom_H (W^{\otimes 2}, W) &\to \Hom_H (W, W^{\otimes 2}) \\
     g &\mapsto R(e\otimes \id_W)(\id_W \otimes g)
\end{array}\end{aligned}$$
Put \begin{equation}\label{defD}
 D:=\Phi_2(C)=(\id_W\otimes C)(e^* \otimes \id_W)                                                                  
    \end{equation}
  By Schur's lemma, we can rescale $C$ such that $CD=\id_W$. Again by Schur's lemma, we have $Ce^* =0$ et $eD=0$. This gives relations (\ref{rel}bc).

Let us show that there exists $\omega \in \C^*$ such that the following relations hold:

\begin{equation}\label{EqD}
\begin{aligned}
    e(C\otimes \id_W)&= \omega e(\id_W\otimes C) \\  D=(\id_W\otimes C)(e^* \otimes  \id_W)&= \omega(C\otimes \id_W)(\id_W\otimes e^*)
\end{aligned}
  \end{equation}
According to Schur's lemma and the isomorphism $(FR)$, there exist $\omega_1$ and $\omega_2$ such that $$e(C\otimes \id_W)= \omega_1 e(\id_W,C)=\omega_1 R \Psi_1^{-1}(C) $$ and $$ (\id_W\otimes C)(e^* \otimes  \id_W)= \omega_2(C\otimes \id_W)(\id_W\otimes e^*)= \omega_2 \Phi_1(C)$$
Hence on the first hand we have
$$\begin{aligned}
\omega_1C&= R \Psi_1 (e(C\otimes \id_W)) \\
 &= R (\id_W\otimes e)(\id_W\otimes C \otimes \id_W)(e^* \otimes \id_W \otimes \id_W)  
 \end{aligned}$$
and on the other hand we have
$$\begin{aligned}
 \omega_2C &= \Phi_1^{-1}((\id_W\otimes C)(e^* \otimes \id_W)) \\
   &= R (\id_W\otimes e)(\id_W\otimes C \otimes \id_W)(e^* \otimes \id_W \otimes \id_W)
\end{aligned}$$
so $\omega_1=\omega_2:=\omega$. Since $C=\omega \Phi_2^{-1}((C\otimes\id_W)(\id_W\otimes e^*)) $, we have $\omega\neq 0$.

Let us show that 

\begin{equation}\label{EqC}
\begin{aligned}
 (\id_W\otimes e)(D\otimes \id_W)&= \omega(e\otimes \id_W)(\id_W\otimes D)=\omega RC \\  (\id_W\otimes D)e^* &=  \omega(D\otimes \id_W)e^*
\end{aligned}
\end{equation}
We have $D=(\id_W\otimes C)(e^* \otimes \id_W)=\omega (C\otimes \id_W)(\id_W\otimes e^*)$

$$
\begin{aligned}
(\id_W\otimes e)(D\otimes \id_W)&=(\id_W\otimes e)(\id_W\otimes C\otimes \id_W)(e^* \otimes \id_W\otimes \id_W) \\
&\overset{(\ref{EqD})}{=}\omega (\id_W\otimes e)(\id_W\otimes \id_W\otimes C)(e^* \otimes \id_W\otimes \id_W)\\
&\overset{(\ref{rel}a)}{=}\omega R C
\end{aligned}
 $$
and 
$$(e\otimes \id_W)(\id_W\otimes D)\overset{(\ref{defD})}{=}(e\otimes \id_W)(\id_W\otimes \id_W \otimes C)(\id_W\otimes e^* \otimes \id_W)\overset{(\ref{rel}a)}{=}R C$$
Hence $(\id_W\otimes e)(D\otimes \id_W)=\omega (e\otimes \id_W)(\id_W\otimes D)$. Moreover $$\begin{aligned}
           (\id_W\otimes D)e^* &= \omega (\id_W\otimes C \otimes \id_W) (\id_W\otimes \id_W \otimes e^*)e^*\\
&=\omega (\id_W\otimes C \otimes \id_W) (e^* \otimes \id_W \otimes \id_W)e^*\\
&\overset{(\ref{defD})}{=}\omega (D\otimes \id_W)e^*
          \end{aligned}$$

Let us show that \begin{equation}\label{HomW}     (\id_W\otimes C)(D\otimes \id_W)=\omega (C\otimes \id_W)( \id_W \otimes D) \end{equation}
Using relations (\ref{EqC}) and (\ref{rel}a), we compute $\omega^2(e\otimes \id_{W^{\otimes 2}})(\id_W\otimes D \otimes \id_W)(\id_W\otimes D)$ in two different ways: 

$$\begin{aligned}
\omega^2 (e\otimes \id_{W^{\otimes 2}})&(\id_W\otimes D \otimes \id_W)(\id_W\otimes D)\\
&\overset{(\ref{EqC})}{=}\omega(\id_W\otimes e \otimes \id_W)(D\otimes \id_W \otimes \id_W)(\id_W\otimes D)\\
&=\omega(\id_W\otimes e \otimes \id_W)(\id_W \otimes \id_W \otimes D)(D\otimes \id_W)\\
&=(\id_{W^{\otimes 2}} \otimes e)(\id_W\otimes D \otimes \id_W)(D\otimes \id_W)\\
&\overset{(\ref{defD})}{=}(\id_{W^{\otimes 2}} \otimes e)(\id_{W^{\otimes 2}}\otimes C \otimes \id_W)(\id_W\otimes e^* \otimes \id_{W^{\otimes 2}})(D\otimes \id_W)\\
&=\omega(\id_{W^{\otimes 2}} \otimes e)(\id_{W^{\otimes 3}}\otimes C)(\id_W\otimes e^* \otimes \id_{W^{\otimes 2}})(D\otimes \id_W)\\
&=\omega(\id_{W} \otimes \id_{W} \otimes e)(\id_W\otimes e^* \otimes \id_W) (\id_W\otimes C)(D\otimes \id_W)\\
&\overset{(\ref{rel}a)}{=}R\omega(\id_W\otimes C)(D\otimes \id_W)
  \end{aligned}$$

$$\begin{aligned}
\omega^2(e\otimes \id_{W^{\otimes 2}})&(\id_W\otimes D \otimes \id_W)(\id_W\otimes D)\\
&\overset{(\ref{defD})}{=}\omega^2(e\otimes \id_{W^{\otimes 2}})(\id_{W^{\otimes 2}}\otimes C \otimes \id_W)(\id_W\otimes e^* \otimes \id_{W^{\otimes 2}})(\id_W\otimes D)\\
&=\omega^2(C\otimes \id_W)(e\otimes \id_W \otimes \id_{W^{\otimes 2}})(\id_W\otimes e^* \otimes \id_{W^{\otimes 2}})(\id_W\otimes D)\\
&\overset{(\ref{rel}a)}{=}R\omega^2(C \otimes \id_W)(\id_W \otimes D)
  \end{aligned}$$
and hence $(\id_W\otimes C)(D\otimes \id_W)=\omega (C\otimes \id_W)( \id_W \otimes D)$.

Let us show that $\omega^3=1$. Denote $A:=\omega^2 e(C\otimes \id_W)(\id_W \otimes D)e^*$. On the first hand, we have: 
$$\begin{aligned}
 A&=\omega^2 e(C\otimes \id_W)(\id_W \otimes D)e^* \\
&\overset{(\ref{HomW})}{=} \omega e(\id_W\otimes C)(D \otimes \id_W)e^* \\
&\overset{(\ref{EqD})}{=}e(C\otimes \id_W)(D \otimes \id_W)e^* \\
&=e e^*
\end{aligned}$$
On the second hand, we have:

$$\begin{aligned}
A&=\omega^2 e(C\otimes \id_W)(\id_W \otimes D)e^*\\
&\overset{(\ref{EqC})}{=}\omega^3 e(C\otimes \id_W)(D \otimes \id_W)e^* \\
&=\omega^3 ee^*
\end{aligned}$$
Hence $\omega^3=1$.

By Frobenius reciprocity, we have isomorphisms

$$\begin{aligned}
\Phi : \End_H(W^{\otimes 2}) \to \Hom_H(W,W^{\otimes 3}), \ f \mapsto (\id_W\otimes f)(e^* \otimes \id_W )\\
\Psi : \End_H(W^{\otimes 2}) \to \Hom_H(W,W^{\otimes 3}), \ f \mapsto (f \otimes \id_W)(\id_W\otimes e^*)
\end{aligned}$$
Using relations (\ref{EqC}) and (\ref{rel}ab) we can compute the following:

$$\begin{aligned}
   \Phi((\id_W \otimes C)(D\otimes \id_W))&=\omega (D\otimes \id_W) D & \ \ \ \ \ \Psi((\id_W\otimes C)(D\otimes \id_W))&=\omega^2(\id_W\otimes D)D  \\
   \Phi(DC)&=(\id_W\otimes D)D   & \Psi(DC)&=\omega^2 (D \otimes \id_W)D\\
   \Phi(e^* e)&=R(\id_W\otimes e^*)  & \Psi(e^* e)&=R(e^* \otimes \id_W) \\
   \Phi(\id_{W^{\otimes 2}})&=(e^* \otimes \id_W) & \Psi(\id_{W^{\otimes 2}})&=(\id_W\otimes e^*)
  \end{aligned}$$

It is clear by $(FR)$ and Schur's lemma that $\{ \id_{W^{\otimes 2}},e^* e, DC \}$ is a basis of $\End_H(W\otimes W)$. Let $\alpha, \beta$ and $\gamma \in \C$ be such that
\begin{equation}\label{defB}
\begin{aligned}
  B&:=(\id_W\otimes C)(D\otimes \id_W)= \omega (C\otimes \id_W)( \id_W \otimes D)=\alpha \id_{W^{\otimes 2}} +\beta e^* e +\gamma DC
\end{aligned} 
\end{equation}
First, using relations (\ref{EqD}) and (\ref{rel}b), we have $eB=\omega^2 e=(\alpha + \tau \beta)e$ so $\alpha + \tau \beta=\omega^2$. We also have

\begin{equation}\label{phiB}
   \omega^2\Phi(B)= (D\otimes \id_W)D=\omega^2 \alpha  (e^* \otimes \id_W) + \omega^2 R \beta (\id_W \otimes e^*) + \omega^2 \gamma (\id_W \otimes D)D 
  \end{equation}
and
\begin{equation}\label{psiB}
 \begin{aligned}
 \Psi(B)&= \omega^2 (\id_W \otimes D)D =\alpha  (\id_W \otimes e^*) + R \beta (e^* \otimes \id_W) + \gamma \omega^2 (D \otimes \id_W)D 
  \end{aligned}
\end{equation}

which lead to the following relations between the coefficients: 

$$\left\{\begin{aligned}
\alpha + \tau \beta&=\omega^2\\
\alpha +\omega R \gamma \beta&=0\\
R \beta +\omega \gamma \alpha&=0\\
\gamma^2&=\omega  
 \end{aligned}\right.$$
In particular, $\alpha\neq 0\neq \beta$. Consider now $\omega^2C(C\otimes \id_W)\Phi(B)\in \End_H(W)$. On the first hand we have:
$$C(\id_W\otimes C)(D\otimes \id_W)D\overset{(\ref{defB})}{=}CBD=C(\alpha \id_{W^{\otimes 2}} +\beta e^* e +\gamma DC)D\overset{(\ref{rel}b)}{=}(\alpha+\gamma)\id_W $$
On the other hand, we have:
$$\begin{aligned}
   C(\id_W\otimes C)\big(&\omega^2 \alpha  (e^* \otimes \id_W) + \omega^2 R \beta (\id_W \otimes e^*) + \omega^2 \gamma (\id_W \otimes D)D\big) \\
  =&\omega^2 \alpha  C(1\otimes C)(e^* \otimes \id_W) + \omega^2 R \beta C(1\otimes C)(\id_W \otimes e^*) + \omega^2 \gamma C(1\otimes C)(\id_W \otimes D)D\\
  \overset{(\ref{defD})}{=}&\omega^2 \alpha  CD + \omega^2 \gamma \id_W \overset{(\ref{rel}b)}{=} \omega^2( \alpha + \gamma) \id_W.
  \end{aligned}$$
Hence $$ \alpha + \gamma=\omega^2( \alpha + \gamma)$$
By relation (\ref{HomW}), we have the identity
$$(\id_W\otimes B)(D\otimes \id_W)D=\omega (B\otimes \id_W)(\id_W\otimes D)D,$$
of which we develop the two sides:

$$\begin{aligned}
(\id_W\otimes B)&(D\otimes \id_W)D\\
\overset{(\ref{defB}, \ref{rel}a, \ref{EqD})}{=}&\alpha (D\otimes \id_W)D +\omega R \beta (\id_W\otimes e^*)+\gamma(\id_W\otimes D)(\id_W\otimes C)(D\otimes \id_W)D\\
\overset{(\ref{defB})}{=}&\alpha (D\otimes \id_W)D +\omega R \beta (\id_W\otimes e^*)+\gamma(\id_W\otimes D)(\alpha \id_{W^{\otimes 2}} + \beta e^* e + \gamma DC)D\\
\overset{(\ref{rel}bc)}{=}&\alpha (D\otimes \id_W)D +\omega R \beta (\id_W\otimes e^*)+(\alpha \gamma + \gamma^2)(\id_W\otimes D)D\\
\overset{(\ref{psiB})}{=}& \alpha (D\otimes \id_W)D +\omega R \beta(\id_W\otimes e^*)+(\alpha \gamma + \omega)(\omega \alpha  (\id_W \otimes e^*) \\
 &+ \omega R \beta (e^* \otimes \id_W) + \gamma (D \otimes \id_W)D)\\
=&(\alpha + \alpha \omega +\omega \gamma)(D\otimes \id_W)D+\omega(R \beta +\alpha^2\gamma  + \omega \alpha)(\id_W\otimes e^*)\\
&+\omega R \beta(\alpha \gamma + \omega)(e^* \otimes \id_W)
\end{aligned}$$

$$\begin{aligned}
\omega(B\otimes \id_W)&(\id_W\otimes D)D& &\\
\overset{(\ref{defB}, \ref{rel}a)}{=}&\omega \alpha (\id_W\otimes D)D+\omega R \beta (e^* \otimes \id_W)+\omega\gamma(D\otimes \id_W)(C\otimes \id_W)(\id_W\otimes D)D\\
\overset{(\ref{HomW})}{=}&\omega \alpha (\id_W\otimes D)D+\omega R \beta (e^* \otimes \id_W)+\gamma(D\otimes \id_W)(\id_W\otimes C)(D\otimes \id_W)D\\
\overset{(\ref{defB})}{=}&\omega \alpha (\id_W\otimes D)D+\omega R \beta (e^* \otimes \id_W)+\gamma(D\otimes \id_W)(\alpha \id_{W^{\otimes 2}} + \beta e^* e + \gamma DC)D\\
=&\omega \alpha (\id_W\otimes D)D+\omega R \beta (e^* \otimes \id_W)+(\alpha \gamma+ \gamma^2)(D\otimes \id_W)D\\
\overset{(\ref{psiB})}{=}&\omega \alpha (\omega \alpha  (\id_W \otimes e^*)+\omega R \beta (e^* \otimes \id_W) + \gamma (D \otimes \id_W)D)\\
&+\omega R \beta (e^* \otimes \id_W)+(\alpha \gamma+ \gamma^2)(D\otimes \id_W)D\\
=&\omega^2\alpha^2(\id_W\otimes e^*)+ \omega R \beta (\omega \alpha +1) (e^*\otimes \id_W)+(\omega \alpha \gamma+\alpha \gamma+\gamma^2)(D\otimes \id_W)D
\end{aligned}$$
This leads to several relations between the coefficients. In particular, we collect:

$$\left\{\begin{aligned}
&\alpha+\gamma=\omega^2(\alpha+\gamma)\\
&\alpha + \omega \alpha  +\omega \gamma=\omega \alpha \gamma+\alpha \gamma+\gamma^2\\
&\alpha \gamma + \omega=\omega \alpha +1\\
&\gamma^2=\omega  
 \end{aligned}\right.
$$

Assume that $\omega^2\neq 1$, then:

$$\left\{\begin{aligned}
&\alpha=-\gamma\\
&-\alpha^3+1=0\\
&\gamma^2=\omega  
 \end{aligned}\right.
$$

To summarize, we have 
$$\left\{\begin{aligned}
&\alpha + \tau \beta=\omega^2\\
&\alpha +\omega R \gamma \beta=0\\
&\alpha=-\gamma\\
&\alpha^3+1=0\\
&\gamma^2=\omega  
 \end{aligned}\right.
\Rightarrow
 \left\{\begin{aligned}
&\alpha^3+1=0\\
&\alpha + \tau \beta=\omega^2\\
&\alpha=-\gamma\\
&\alpha + R \beta=0\\
&\omega=\gamma^{-1} 
 \end{aligned}\right.$$
In particular, we have $$\alpha + \tau \beta=\omega^2 \Rightarrow \alpha -\tau R\alpha = -\alpha \Rightarrow \alpha(2-R\tau)=0 $$
Thus $\tau=2R$, which contradicts Lemma \ref{LemmeTau}. Hence $\omega^2=1=\omega^3$ and $\omega=1$. Moreover, we can consider once more the equality 
$$\alpha \gamma + \omega=\omega \alpha +1 \overset{\omega=1}{\Rightarrow} \alpha \gamma + 1= \alpha +1 $$
and we have $\gamma=1$.

Hence, we have
$$\begin{aligned}
\gamma&=\omega=1,\ \tau \neq R\\
\alpha&=-R\beta=R(R-\tau)^{-1}
\end{aligned}$$
and, in view of \ref{defB}, we have $$B=R(R- \tau)^{-1} \id_{W^{\otimes 2}} -(R- \tau)^{-1} e^* e +  DC$$
which gives relations (\ref{rel}f), and from relations (\ref{psiB}), we get relation (\ref{rel}g).
Finally, we have an isomorphism $$  \begin{array}{rl}
 \Omega : \End_H(W^{\otimes2}) &\longrightarrow \Hom_H(W^{\otimes 3}, W) \\
     f &\longmapsto R (\id_W \otimes e)(f \otimes \id_W)
\end{array} $$
In particular, using relations (\ref{rel}abc) and (\ref{EqD}), we can compute the following:

$$\begin{aligned}
   \Omega((\id_W \otimes C)(D\otimes \id_W))&= C(\id_W \otimes C)   \\
   \Omega(DC)&=C(C\otimes \id_W)   \\
   \Omega(e^* e)&=(e \otimes \id_W)  \\
   \Omega(\id_{W^{\otimes 2}})&=R (\id_W \otimes e) 
  \end{aligned}$$
This isomorphism applied to the relation (\ref{rel}f) gives the relation (\ref{rel}h).
\end{proof}

The next lemma will allow us to define the $*$-structure on the algebra $(A, \varphi)$ in Theorem \ref{ClassTh}.

\begin{lemma}\label{LemmInvol}
 Let $H$ be a compact $SO(3)$-deformation, with fundamental comodule $(W,\alpha)$. Then $R=1$ and there exist an antilinear map $*:W \to W$ such that:
\begin{subequations}\label{RelationInvol}
\begin{align}
   & w^{**}= w, & \forall  v\in W,\\
   & e(v^* \otimes w^*)= \overline{e(w\otimes v)}, & \forall  v,w\in W, \\
   & e(w \otimes w^*) > 0, & \forall w \in W\backslash(0),\\
   & C(v^* \otimes w^*)=C(w \otimes v)^*, & \forall  v,w\in W.
\end{align}
  \end{subequations}
\end{lemma}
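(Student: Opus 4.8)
The plan is to establish $R=1$ first (using the multiplication $C$, not just the fusion rules), then to \emph{define} the involution from the invariant inner product—for which (\ref{RelationInvol}a)--(\ref{RelationInvol}c) come almost for free—and finally to prove the anti-multiplicativity (\ref{RelationInvol}d), which is the only genuinely delicate point.

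For $R=1$: by the fusion rules $\Hom_H(\C,W\otimes W)=\C\,e^*$ is one-dimensional, so the flip $\sigma$ acts on it by a sign and $e$ is symmetric or antisymmetric, say $e\,\sigma=\nu e$ with $\nu\in\{\pm1\}$ (equivalently $F^t=\nu F$). I would put $T:=e(C\otimes\id_W)$, a trilinear form which is nonzero since $e$ is nondegenerate and $C\neq0$. Relation (\ref{rel}d) gives $T(a\otimes b\otimes c)=e(a\otimes C(b\otimes c))$, whence, by the symmetry of $e$, $T(a\otimes b\otimes c)=\nu\,T(b\otimes c\otimes a)$. Iterating this cyclic shift three times forces $\nu^3=1$, so $\nu=1$ and $F=F^t$. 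Consequently $F^*F=\overline F F=R\,I_n$, so $\tau=\tr(F^*F)=R\,n$; as $\tau>2$ by Lemma~\ref{LemmeTau}, this yields $R=1$ (and, incidentally, $\tau=n$).

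Next I would define $*$ as the unique antilinear map with $e(u\otimes v^*)=\langle v,u\rangle$, where $\langle\,,\rangle$ is the invariant inner product; it is well defined and bijective because $e$ is nondegenerate. Then (\ref{RelationInvol}c) is immediate, since $e(w\otimes w^*)=\langle w,w\rangle>0$, while (\ref{RelationInvol}a) and (\ref{RelationInvol}b) reduce to $F=F^t$ and $\overline F=F^{-1}$, both now available: in an orthonormal basis $w_q^*=\sum_k F_{kq}w_k$, and these two identities give $w^{**}=w$ and the conjugation rule for $e$. One also checks that $*$ is antiunitary and colinear, so by Schur's lemma it is the \emph{only} colinear antilinear involution satisfying (b) and (c); this already forces it to be the map sought in the statement.

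The hard part is (\ref{RelationInvol}d). Since $\Hom_H(W\otimes W,W)=\C\,C$ and $\Theta\colon v\otimes w\mapsto C(w^*\otimes v^*)^*$ is colinear and linear, one has $\Theta=\kappa C$ for a scalar $\kappa$, and (\ref{RelationInvol}d) is equivalent to $\kappa=1$. The map $C\mapsto\Theta$ is a conjugate-linear involution of $\Hom_H(W\otimes W,W)$ (because $*$ and $\sigma$ are involutions and $w^{**}=w$), so $|\kappa|^2=1$; the difficulty is that every such categorical manipulation is invariant under rotating $\kappa$ by a phase, so only positivity can fix the sign. Rewriting $\Theta=\kappa C$ as $C(w_i\otimes w_j)^*=\kappa\,C(w_j^*\otimes w_i^*)$ and using (\ref{RelationInvol}c) gives $\sum_{i,j}\|C(w_i\otimes w_j)\|^2=\kappa\,Q$ for an orthonormal basis $(w_i)$, with $Q:=\sum_{i,j}e\big(C(w_i\otimes w_j)\otimes C(w_j^*\otimes w_i^*)\big)$. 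I would then evaluate $Q$ by rewriting each summand through (\ref{rel}d), expanding the inner $C(w_j\otimes C(w_j^*\otimes w_i^*))$ with the ``$\mathrm{BAC}$--$\mathrm{CAB}$'' relation (\ref{rel}h), and summing over $j$: the key points are that $\sum_j w_j\otimes w_j^*=e^*(1)$—which uses $R=1$ and $F=F^t$—so that the contribution of $G:=\sum_j C(w_j\otimes w_j^*)=C e^*(1)=0$ vanishes by (\ref{rel}c), and that $\tau=n$, which together give $Q=n>0$. Hence $\kappa=\big(\sum_{i,j}\|C(w_i\otimes w_j)\|^2\big)/n>0$, and with $|\kappa|=1$ we conclude $\kappa=1$. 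The main obstacle is precisely this last evaluation: the formal identities leave a phase undetermined, and it is broken only by the positivity of $\langle\,,\rangle$, funnelled through $C e^*(1)=0$ and the value $\tau=n$, both of which in turn rest on $R=1$.
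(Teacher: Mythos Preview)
Your argument for $R=1$ has a genuine gap. You claim that the flip $\sigma$ acts on $\Hom_H(\C,W\otimes W)$ by a sign, equivalently that $e\circ\sigma=\nu e$ with $\nu\in\{\pm1\}$ (i.e.\ $F^t=\nu F$). But the flip is \emph{not} $H$-colinear when $H$ is not cocommutative, so there is no reason for $\sigma e^*$ to lie in $\Hom_H(\C,W\otimes W)$. Concretely, for $H=\gO(SO_{q^{1/2}}(3))=\Aaut(M_2(\C),\tr_q)$ with $q\neq1$, the invariant form on the $3$-dimensional fundamental comodule is (up to scalar) the restriction of $(M,N)\mapsto\tr_q(MN)$, which is neither symmetric nor antisymmetric (e.g.\ $\tr_q(e_{12}e_{21})=q\neq q^{-1}=\tr_q(e_{21}e_{12})$). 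Hence your cyclic identity $T(a\otimes b\otimes c)=\nu\,T(b\otimes c\otimes a)$ is unjustified, and so is the consequence $\tau=n$.

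The paper proceeds in the opposite order: it first defines $*$ via $w_i^*=\sum_kF_{ki}w_k$ and verifies the analogues of (\ref{RelationInvol}a)--(\ref{RelationInvol}c) with $R$ still present ($w^{**}=Rw$, etc.), then shows by Schur's lemma that $C(v^*\otimes w^*)=\lambda\,C(w\otimes v)^*$ for some $\lambda$ with $\lambda^2=R$, and only then excludes $R=-1$ by computing $C(w\otimes v)^{**}$ in two ways to force $C=0$. Your positivity argument for $\kappa=1$ in the last step is essentially the paper's idea (and in fact yields $Q=\tau>2$ rather than $Q=n$), but it cannot be reached until $R=1$ is established correctly. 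Note also that your $*$ (defined by $e(u\otimes v^*)=\langle v,u\rangle$) coincides with the paper's only when $F=F^t$; without that, you must separately verify that $v\otimes w\mapsto C(w^*\otimes v^*)^*$ is $H$-colinear for \emph{your} involution before invoking Schur.
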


\begin{proof}
Let $(w_i)_{1\leq i \leq n}$ be an orthonormal basis of $W$ and let $(x_{ij})_{1\leq i,j \leq n}$ be the associated unitary multiplicative matrix of coefficients. According to the beginning of the previous proof, the generators $x_{ij}$ and $x_{ij}^*$ are linked by the relations $\overline{x}= F^{-1}xF$, $F\in GL_n(\C)$ satisfying $F\overline{F}= R I_n$, $R\in\{\pm 1\}$. Let $*: W \to W$ be the antilinear map defined by $w_i^*=\sum_k w_kF_{ki}$. Note that we have \begin{equation}\label{Etoile1}
 e^*(1)=\sum_{i,j=1}^{n} F_{ji} w_i \otimes w_j = \sum_{i=1}^n w_i \otimes w_i^*.                                                                                                                                                                                                                                                                                                                                                                                                                                                                                                                                                                                                                                                                                                                                                                                                                                                                                                                                                                    \end{equation}

For $\gamma \in \C^*$, denote $C_{\gamma}=\gamma C$. We begin to show that, with $\gamma\in \{1,\mathrm{i}\}$ if $R=1$ and $\gamma \in \{ 1 \pm \mathrm{i} \}$ if $R=-1$ (where $\mathrm{i}^2=-1$), the following relations occur:
\begin{subequations}\label{RelationInvolTemp}
\begin{align}
   & w^{**}= R w, & \forall  v\in W,\\
   & e(v^* \otimes w^*)= R \overline{e(w\otimes v)}, & \forall  v,w\in W, \\
   & e(w \otimes w^*) > 0, & \forall w \in W\backslash(0),\\
   & C_{\gamma}(v^* \otimes w^*)=C_{\gamma}(w \otimes v)^*, & \forall  v,w\in W.
\end{align}
  \end{subequations}

We have
$$w_i^{**}=(\sum_k w_kF_{ki})^*=\sum_k w_k^*\overline{F}_{ki}=\sum_{k,l}w_lF_{lk}\overline{F}_{ki}=R w_i$$ and relation (\ref{RelationInvolTemp}a) follows.

Let us check the second relation. On the first hand, we have $e(w_i\otimes w_j)=\overline{F}_{ji}$ by definition, and on the other hand, we get $$e(w_j^*\otimes w_i^*)= \sum_{k,l} F_{ki}F_{lj}\overline{F}_{kl}=RF_{ji}=R\overline{e(w_i\otimes w_j)}$$and relation (\ref{RelationInvolTemp}b) follows.

Relation (\ref{RelationInvolTemp}c) can be seen as follows. Let $w=\sum_{i=1}^n \lambda_i w_i \in W$, $w\neq0$, we compute 
$$\begin{aligned}
    e(w \otimes w^*)&=\sum_{i,j} \lambda_i \overline{\lambda_j} e(w_i \otimes w_j^*)=\sum_{i,j,k}\lambda_i \overline{\lambda_j} F_{kj}\overline{F}_{ki}\\
                    &=\sum_k (\sum_i \lambda_i \overline{F}_{ki})(\sum_j \overline{\lambda_j} F_{kj})=\sum_k (\sum_i \lambda_i \overline{F}_{ki})\overline{(\sum_i \lambda_i \overline{F}_{ki})} >0.
  \end{aligned}$$

To show relation (\ref{RelationInvolTemp}d), remark that we have, for all $1\leq i\leq n$,
\begin{equation}\label{StarColin}
\begin{aligned}
  \alpha(w_i^*) &=\sum_k \alpha(w_k)F_{ki}=\sum_{k,p} w_p\otimes x_{pk}F_{ki} \\
  &=\sum_{p} w_p\otimes (xF)_{pi}=\sum_{p} w_p\otimes (F\overline{x})_{pi} \\
  &=\sum_{k,p} w_p\otimes F_{pk}x^*_{ki}=\sum_{k}w_{k}^*\otimes x^*_{ki},
\end{aligned}
\end{equation}
Define the antilinear map $$\#:W\otimes W \to W\otimes W, v\otimes w \mapsto (v\otimes w )^{\#}:=w^*\otimes v^* $$ According to (\ref{RelationInvolTemp}a), it is an involution and we have, for all $1\leq i,j\leq n$ $$\alpha_{W\otimes W} ((w_i\otimes w_j)^{\#})=\sum_{k,l} (w_k \otimes w_l)^{\#}\otimes (x_{ki}x_{lj})^*.$$
Hence the map $$\tilde{C} : W\otimes W \to W, w \mapsto C(w^{\#})^*$$ is $H$-colinear, and by Schur's lemma, there exists $\lambda \in \C$ such that $\tilde{C}=\lambda C$. In the same way, define the colinear map $$\tilde{D}:W \to W\otimes W,w \mapsto D(w^*)^{\#}.$$ Using relations (\ref{rel}b) and (\ref{RelationInvolTemp}a), it is clear that $\tilde{C}\tilde{D}=R\id_W$. Moreover, we have $\tilde{D}=(\tilde{C}\otimes \id_W)(\id_W \otimes e^*)$. Indeed, for all $1\leq i\leq n$,
$$\begin{aligned}
D(w_i^*)^{\#}&\overset{(\ref{defD})}{=}\big( (\id_W \otimes C)(e^*\otimes \id_W)(w_i^*)\big)^{\#}\overset{(\ref{Etoile1})}{=} \big( \sum_{p} w_p \otimes C( w_p^* \otimes w_i^*)\big)^{\#}\\
 &= \sum_{p} C( w_p^* \otimes w_i^*)^* \otimes w_p^* = \sum_{p} \tilde{C}(w_i \otimes w_p) \otimes w_p^* \\
 &\overset{(\ref{Etoile1})}{=} (\tilde{C} \otimes \id_W)(\id_W \otimes e^*)(w_i).
  \end{aligned}$$
Hence, according to relation (\ref{rel}b), $\tilde{C}\tilde{D}=\lambda^2 \id_W$, and $\lambda^2=R$. Choose $\gamma \in \C^*$, with $\gamma\in \{1,\mathrm{i}\}$ if $R=1$, $\gamma\in\{1\pm \mathrm{i}\}$ if $R=-1$, such that $\gamma R \overline{\lambda} = \overline{\gamma}$, we have the claimed relation (\ref{RelationInvolTemp}d) $$C_{\gamma}(v^* \otimes w^*)=\gamma R \overline{\lambda}(C(w \otimes v))^*=\overline{\gamma}(C(w \otimes v))^*=C_{\gamma}(w \otimes v)^*, \ \ \ \forall \ v,w\in W.$$

Let us show that $R=1$. We assume that $R=-1$ and we use relation (\ref{RelationInvolTemp}d) with $\gamma \in \{ 1 \pm \mathrm{i} \}$. On the first hand, we have, for $v,w \in W$, $$C_{\gamma}(w \otimes v)^{**} \overset{(\ref{RelationInvolTemp}a)}{=} -C_{\gamma}(w \otimes v) $$ and on the other hand, we have $$C_{\gamma}(w \otimes v)^{**}\overset{(\ref{RelationInvolTemp}d)}{=}(C_{\gamma}(v^* \otimes w^*))^*\overset{(\ref{RelationInvolTemp}d)}{=} C_{\gamma}(w^{**} \otimes v^{**})\overset{(\ref{RelationInvolTemp}a)}{=}C_{\gamma}(w \otimes v).$$ Since $C_{\gamma} \neq 0$, this is a contradiction, and $R=1$.

So far, we have the relations (\ref{RelationInvol}abc). Let us show the remaining relation (\ref{RelationInvol}d). For all $v,w \in W$, we have $C(v\otimes w)^* = \lambda C(w^* \otimes v^*)$ with $\lambda \in \{\pm 1\}$, and we need to show that $\lambda=1$. For all $1 \leq i \leq n$ and all $v\in W$, using (\ref{RelationInvol}c), we have $$e\big( C(v \otimes w_i) \otimes C(v \otimes w_i)^* \big) \geq 0.$$ Hence, for all $v \in W\backslash (0)$, we have 
$$\begin{aligned}
   0&\leq \sum_{i=1}^n e\big( C(v \otimes w_i) \otimes C(v \otimes w_i)^* \big) = \lambda \sum_{i=1}^n  e\big( C(v \otimes w_i) \otimes C(w_i^* \otimes v^*) \big)\\
 &\overset{(\ref{rel}d)}{=} \lambda \sum_{i=1}^n  e\big( C(C \otimes \id_W) (v \otimes w_i \otimes w_i^*) \otimes v^*\big) = \lambda e\big( C(C \otimes \id_W) (v \otimes \sum_{i=1}^n (w_i \otimes w_i^*)) \otimes v^*\big) \\
 &\overset{(\ref{Etoile1})}{=}  \lambda e\big( C(C \otimes \id_W)(\id_W \otimes e^*) (v) \otimes v^*\big) \overset{(\ref{defD})}{=} \lambda e\big( CD (v) \otimes v^*\big) \\ 
 &\overset{(\ref{rel}b)}{=} \lambda e( v \otimes v^*)
  \end{aligned}$$
Since $e( v \otimes v^*)>0$, we have $\lambda>0$ hence $\lambda=1$, and we have the claimed relation (\ref{RelationInvol}d).
\end{proof}

We are now able to prove Proposition \ref{PropAlgebre}.

\begin{proof}[Proof of Proposition \ref{PropAlgebre}]
 Let $H$ be a compact Hopf algebra whose corepresentation semi-ring is isomorphic to that of $SO(3)$. We write $(W_n^H)_{n\in \N}$ its family of simple comodules and we denote by $W:=W_1^H$ its fundamental comodule, with $\alpha_W'$ the associated coaction. We use the notations introduced in the previous lemmas.

The first thing to do is to define a finite-dimensional measured $C^*$-algebra $(A,\varphi)$ together with an $H$-coaction. Let $A$ be the $H$-comodule $\C\oplus W$, $\dim(A) \geq 4$. Endow $A$ with the following maps: let $m: A \otimes A \to A$, $u : \C \to A$ and $\varphi : A \to \C$ be the $H$-colinear maps defined by
$$\begin{aligned}
 m(\lambda \otimes \mu) &= \lambda \mu,  & \forall \lambda,  \mu \in \C,\\
 m(\lambda \otimes v) &= m(v \otimes \lambda) = \lambda v,     &\forall \lambda \in \C, \ v \in W,\\
 m(v \otimes w) &= \big( (\tau-1)^{-1} e(v\otimes w), C(v \otimes w)\big), &\forall  v,w \in W,\\
 u(1)&=(1,0):=1_A,& \\
 \varphi(\lambda, v) &= \lambda, &\forall \lambda \in \C, \ v \in W, 
\end{aligned}$$
and let $* : A \to A$ be the antilinear map defined by:
$$(\lambda, v)^* = (\overline{\lambda}, v^*)$$ where $*:W\to W$ is the antilinear map defined in Lemma \ref{LemmInvol}.

\begin{itemize}
 \item $m$ is associative:
The only non-trivial part is to check the associativity on $W$. This is done as follows: $$\begin{aligned}
m(m\otimes \id_W)_{|W\otimes W \otimes W}&= \big( (\tau-1)^{-1} e(C\otimes \id_W), \ C(C\otimes \id_W)+ (\tau-1)^{-1} (e\otimes \id_W)\big) \\
&\overset{(\ref{rel}dh)}{=}\big((\tau-1)^{-1} e(\id_W\otimes C), \ C(\id_W\otimes C) + (\tau-1)^{-1} (\id_W\otimes e)\big) \\ &= m(\id_W\otimes m)_{|W\otimes W \otimes W}
\end{aligned}.$$
Now we simplify the notations by writing the product $m\big( (\lambda,v)\otimes (\mu,w)\big):=(\lambda,v)(\mu,w)$.
\item $u$ is a unit: This is clear.
 
\item $A$ is a $*$-algebra:
$*:A\to A$ is indeed an antilinear involution by Lemma \ref{LemmInvol}, and we have 
$$\begin{aligned}
 \big((\lambda,v)(\mu, w)\big)^*&=\big( (\lambda \mu + (\tau-1)^{-1}e(v\otimes w), \ \lambda w + \mu v + C(v\otimes w) \big)^*\\
 &=\big(\overline{\lambda \mu} + (\tau-1)^{-1}\overline{e(v\otimes w)}, \ \overline{\lambda} w^* + \overline{\mu} v^* + C(v\otimes w)^*\big)\\
 &\overset{(\ref{RelationInvol}bd)}{=} \big( \overline{\mu} \overline{\lambda}+ (\tau-1)^{-1}e(w^*\otimes v^*), \ \overline{\lambda} w^* + \overline{\mu} v^* + C(w^*\otimes v^*) \big)\\
 &=(\mu, w)^*(\lambda,v)^*.
\end{aligned}$$

\item $\varphi$ is a faithful state on $A$: we have $\varphi(1_A)=1$ by definition, and

$$\begin{aligned}
\varphi((\lambda, w)(\lambda, w)^*)&=\varphi\big( \lambda \overline{\lambda} + (\tau-1)^{-1}e(w\otimes w^*),\ \lambda w^* + \overline{\lambda} w + C(w\otimes w^*)\big) \\
&= |\lambda|^2 + (\tau-1)^{-1}e(w\otimes w^*) 
\end{aligned}$$
Hence according to Lemmas \ref{LemmeTau} and \ref{LemmInvol}, we have, for all $(\lambda, w)\in A$ $$\varphi((\lambda, w)(\lambda, w)^*) \geq 0$$ with equality if and only if $(\lambda, w)=0$.
\end{itemize}
Thus $A$ is a finite dimensional $*$-algebra having a faithful state, and is a $C^*$-algebra. By Lemma \ref{LemmInvol} and by construction of the structure maps, $A$ is a $H$-comodule $*$-algebra and $\varphi$ is equivariant, thus by universality, there exists a Hopf $*$-algebra morphism $f : \Aaut(A,\varphi) \to H$ such that $(\id_W\otimes f) \circ \alpha_A = \alpha_A'$.
Finally, $W=\ker (\varphi)$ is a $\Aaut(A,\varphi)$-subcomodule of $A$, and by definition of the coactions on $A$, we have $(\id_W\otimes f) \circ \alpha_W = \alpha_W'$.

Let us show that $\varphi$ is normalizable. The map $$\tilde{\varphi} : A \overset{\id_A \otimes\tilde{\delta}}{\to} A\otimes A\otimes A \overset{\id_A \otimes m }{\to} A\otimes A \overset{m}{\to} A \overset{\varphi}{\to} \C$$ is a $H$-colinear map. Using Schur's lemma, we have $\dim(\Hom_H(A,\C))=1$, hence there exists $c\in \C$ such that $\tilde{\varphi} = c \varphi$. Let us compute $\tilde{\varphi}(1_A)$. A basis of $A$ is given by $a_1=1_A$, $a_i=w_{i-1}$ for $i=2,\dots, n+1$. Then we have 
$$B_{ij}:=\varphi  (a_i  a_j)=\left\{ \begin{array}{cc}
1 &\text{ if } i=j=1\\ 0 &\text{ if } i=1\neq j \text{ or } i\neq 1 = j \\ (\tau-1)^{-1} \overline{F}_{j-1,i-1} &\text{ in the other cases}\end{array}\right.$$ where $F \in GL_n(\C)$ is given by $e(w_i\otimes w_j) = \overline{F}_{ji}$. Hence $\tilde{\delta}$ is given by $$\tilde{\delta}(1)= \sum_{i,j=1}^{n+1} B_{ij}^{-1}a_i \otimes a_j=1_A\otimes 1_A + (\tau-1) e^*(1).$$  Hence, using relation (\ref{rel}b), we have $\tilde{\varphi}(1_A)=\tau+1>3$, so $\tilde{\varphi} = (\tau+1) \varphi$. Hence $\varphi$ is homogeneous. Moreover, we have $\varphi(1_A)=1$, hence $\varphi$ is normalizable.
\end{proof}

We are now able to prove Theorem \ref{ClassTh}, that is, to show that any compact $SO(3)$-deformation is isomorphic to the quantum automorphism group of a finite-dimensional measured $C^*$-algebra.

\begin{proof}[Proof of Theorem \ref{ClassTh}]
According to Proposition \ref{PropAlgebre} and its proof, there exist a normalizable measured $C^*$-algebra $(A,\varphi)$ and a $*$-Hopf algebra morphism $f : \Aaut(A,\varphi) \to H$ such that $W\subset A$ is a $\Aaut(A,\varphi)$-subcomodule and $(\id_W\otimes f) \circ \alpha_W = \alpha_W'$, where $\alpha_W$ is the coaction on $W$ of $\Aaut(A,\varphi)$. According to \cite{Ban2, BanFussCat}, $\Aaut(A,\varphi)$ is a compact $SO(3)$-deformation and we write $(W_n^A)_{n\in \N}$ its family of simple comodules. Then we have $f_*(W_1^A)\simeq W_1^H$, and by induction, we have $f_*(W_n^A)\simeq W_n^H$ for all $n\in \mathbb{N}$, hence, by a standard semi-ring argument, $f$ is an isomorphism of $*$-Hopf algebras and $H\simeq \Aaut(A,\varphi)$.
\end{proof}

\section{Representation theory of quantum automorphism groups}

We now investigate the case where $\varphi$ is not necessarily positive, and the aim of this section is to prove Theorem \ref{SO3comod}.

We will construct equivalences of monoidal categories by using appropriate Hopf bi-Galois objects (see \cite{Sch1}). We will work in the convenient framework of cogroupoids (see \cite{Bi2}). 

\begin{defi}
A $\C$-\emph{cogroupoid} $C$ consists of:
\begin{itemize}
 \item A set of objects $ob(C)$.
 \item For any $X,Y \in ob(C)$, a $\C$-algebra $C(X,Y)$.
 \item For any $X,Y,Z \in ob(C)$, algebra morphisms $$ \Delta_{X,Y}^Z : C(X,Y) \to C(X,Z)\otimes C(Z,Y) \text{ and }  \varepsilon_X : C(X,X)\to \C$$ and linear maps  $$S_{X,Y} : C(X,Y) \to C(Y,X)$$ satisfying several compatibility diagrams: see \cite{Bi2}, the axioms are dual to the axioms defining a groupoid.
\end{itemize}
A cogroupoid $C$ is said to be \emph{connected} if $C(X,Y)$ is a nonzero algebra for any $X,Y \in ob(C)$.
\end{defi}

Let $E\in \underset{\lambda=1}{\overset{n_E}{\bigoplus}} GL_{d_{\lambda}^E}(\C)$ and $F\in \underset{\mu=1}{\overset{n_F}{\bigoplus}} GL_{d_{\mu}^F}(\C)$ be two multimatrices. We denote $d_E:=d_{n_E}^E$ and $d_F:=d_{n_F}^F$. The algebra $\A(E,F)$ is the universal algebra with generators $X_{kl,\mu}^{ij,\lambda}$ ($1\leq \lambda \leq n_E$, $1\leq i,j \leq d^E_{\lambda}$, $1\leq \mu \leq n_F$, $1\leq k,l\leq d^F_{\mu}$) submitted to the relations

\begin{align*}
 &\sum_{q=1}^{d^E_{\nu}} X_{ij,\lambda}^{rq,\nu}X_{kl,\mu}^{qs,\nu}=\delta_{\lambda \mu} \delta_{jk} X_{il,\mu}^{rs,\nu}, & & \sum_{\mu=1}^{n_F}\sum_{k=1}^{d^F_{\mu}} X_{kk,\mu}^{ij,\lambda}=\delta_{ij},\\
 &\sum_{\mu=1}^{n_E}\sum_{k,l=1}^{d^E_{\mu}}E^{-1}_{kl,\mu} X_{ij,\lambda}^{kl,\mu}=F^{-1}_{ij,\lambda}, & & \sum_{r,s=1}^{d^F_{\mu}} F_{rs,\mu} X_{kr,\mu}^{ip,\lambda}X_{sl,\mu}^{qj,\nu}=\delta_{\lambda \nu}E_{pq,\lambda}X^{ij,\lambda}_{kl,\mu}.
\end{align*}
It is clear that $\A(E,E) = \Aaut(A_E,\tr_E)$ as an algebra.

We have the following lemma:

\begin{lemma}
\begin{itemize}
 \item  For any multimatrices $E\in \underset{\lambda=1}{\overset{n_E}{\bigoplus}} GL_{d_{\lambda}^E}(\C)$, $F\in \underset{\mu=1}{\overset{n_F}{\bigoplus}} GL_{d_{\mu}^F}(\C)$ and $G\in \underset{\nu=1}{\overset{n_G}{\bigoplus}} GL_{d_{\nu}^G}(\C)$, there exist algebra maps 
$$\Delta_{E,F}^{G} : \mathcal{A}(E,F) \to \mathcal{A}(E,G) \otimes \mathcal{A}(G,F)$$
defined by $\Delta_{E,F}^{G} (X_{kl,\mu}^{ij,\lambda})=\underset{\nu=1}{\overset{n_G}{\sum}}\underset{r,s=1}{\overset{d_{\nu}^G}{\sum}} X_{rs,\nu}^{ij,\lambda} \otimes X_{kl, \mu}^{rs,\nu}$ ($1\leq \lambda \leq n_E$, $1\leq i,j \leq d^E_{\lambda}$, $1\leq \mu \leq n_F$, $1\leq k,l\leq d^F_{\mu}$) and $$\varepsilon_{E} : \mathcal{A}(E) \to \C $$ defined by $\varepsilon_E (X_{kl,\mu}^{ij,\lambda})= \delta_{ik}\delta_{jl}\delta_{\lambda \mu}$ ($1\leq \lambda, \mu \leq n_E$, $1\leq i,j \leq d^E_{\lambda}$, $1\leq k,l\leq d^E_{\mu}$) such that, for any multimatrix $M\in \underset{\eta=1}{\overset{n_M}{\bigoplus}} GL_{d_{\eta}^M}(\C)$, the following diagrams commute: 

$$\xymatrix{
\mathcal{A}(E,F) \ar[r]^{\Delta_{E,F}^{G} \ \ \ \ \ \ } \ar[d]_{\Delta_{E,F}^{M}} & \mathcal{A}(E,G)\otimes \mathcal{A}(G,F) \ar[d]^{\Delta_{E,G}^{M}\otimes \id} \\
\mathcal{A}(E,M)\otimes \mathcal{A}(M,F) \ar[r]_{\id\otimes \Delta_{M,F}^{G} \ \ \ \ \ \ } & \mathcal{A}(E,M)\otimes \mathcal{A}(M,G) \otimes \mathcal{A}(G,F)
}$$

$$\xymatrix{
\mathcal{A}(E,F)  \ar[d]_{\Delta_{E,F}^{F}} \ar[rd]& \\
\mathcal{A}(E,F)\otimes \mathcal{A}(F) \ar[r]_{\ \ \ \ \ \ \id\otimes \varepsilon_{F}} & \mathcal{A}(E,F)
} \ \ \ \
\xymatrix{
\mathcal{A}(E,F) \ar[d]_{\Delta_{E,F}^{E}} \ar[rd]& \\
\mathcal{A}(E)\otimes \mathcal{A}(E,F) \ar[r]_{\ \ \ \ \ \ \varepsilon_{E}\otimes \id} & \mathcal{A}(E,F)
}$$
 \item For any multimatrices $E\in \underset{\lambda=1}{\overset{n_E}{\bigoplus}} GL_{d_{\lambda}^E}(\C)$, $F\in \underset{\mu=1}{\overset{n_F}{\bigoplus}} GL_{d_{\mu}^F}(\C)$, there exists an algebra morphism $$S_{E,F}:\mathcal{A}(E,F) \to \mathcal{A}(F,E)^{op}$$ defined by $S_{E,F}(X_{kl,\mu}^{ij,\lambda})= \underset{r=1}{\overset{d^E_{\lambda}}{\sum}}\underset{s=1}{\overset{d^F_{\mu}}{\sum}} E_{jr,\lambda} F^{-1}_{sl,\mu} X_{ri,\lambda}^{sk, \mu}$ ($1\leq \lambda \leq n_E$, $1\leq i,j \leq d^E_{\lambda}$, $1\leq \mu \leq n_F$, $1\leq k,l\leq d^F_{\mu}$) such that the following diagrams commute: 

$$\xymatrix{
\mathcal{A}(E) \ar[r]^{\varepsilon_{E}} \ar[d]_{\Delta_{E,E}^{F}}& \C \ar[r]^{u \ \ \ \ \ }& \mathcal{A}(E,F)  \\
\mathcal{A}(E,F)\otimes \mathcal{A}(F,E) \ar[rr]_{\id\otimes S_{F,E}} & & \mathcal{A}(E,F)\otimes \mathcal{A}(E,F)\ar[u]^{m} 
}$$ 
$$\xymatrix{
\mathcal{A}(E) \ar[r]^{\varepsilon_{E}} \ar[d]_{\Delta_{E,E}^{F}}& \C \ar[r]^{u \ \ \ \ \ }& \mathcal{A}(F,E)  \\
\mathcal{A}(E,F)\otimes \mathcal{A}(F,E) \ar[rr]_{S_{E,F}\otimes \id} & & \mathcal{A}(F,E)\otimes \mathcal{A}(F,E)\ar[u]^{m} 
}$$

\end{itemize}
\end{lemma}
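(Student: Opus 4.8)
The lemma asserts precisely that the algebras $\A(E,F)$, together with the displayed maps, assemble into a $\C$-cogroupoid, so the plan is to (i) show that the stated formulas for $\Delta_{E,F}^G$, $\varepsilon_E$ and $S_{E,F}$ define algebra morphisms (resp.\ an algebra morphism into the opposite algebra), and then (ii) verify the coassociativity, counit and antipode diagrams. The organising principle is that each $\A(E,F)$ is given by a universal presentation: to build an algebra morphism out of it (resp.\ an anti-morphism) it suffices to check that the proposed images of the generators $X_{kl,\mu}^{ij,\lambda}$ satisfy the four defining relations (resp.\ the same relations with every product reversed). Dually, since every map occurring in the diagrams is an algebra (anti-)morphism and the $X_{kl,\mu}^{ij,\lambda}$ generate, each diagram need only be checked on a single generator. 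The whole proof thus reduces to a finite list of index computations, to be settled using the relations of $\A(E,G)$ and $\A(G,F)$ for $\Delta$, elementary Kronecker-delta bookkeeping for $\varepsilon$, and the relations of $\A(F,E)$ together with the invertibility of $E$ and $F$ for $S$.

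First I would treat $\Delta_{E,F}^G$. Substituting $\Delta_{E,F}^G(X_{kl,\mu}^{ij,\lambda})=\sum_{\nu,r,s} X_{rs,\nu}^{ij,\lambda}\otimes X_{kl,\mu}^{rs,\nu}$ into each defining relation, the first (multiplicativity) relation survives because, after expanding the product, relation~1 applies in each of the two tensor factors; the second (counit-type) relation collapses upon applying the analogous relation first in the right-hand factor and then in the left-hand one; the third (trace) relation telescopes, the matrix $G^{-1}$ produced by the relation of $\A(E,G)$ being absorbed by the relation of $\A(G,F)$ so that $E^{-1}$ is carried through to $F^{-1}$; and the fourth (quadratic) relation is preserved by combining the fourth relations of both factors, the factor $G$ cancelling the factor $G^{-1}$ in the middle. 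The counit $\varepsilon_E$ is handled the same way but trivially, each relation turning into a numerical identity among Kronecker deltas. Coassociativity of $\Delta$ and the two counit triangles are then immediate on generators, exactly as for the comultiplication and counit of a coalgebra: both composites send $X_{kl,\mu}^{ij,\lambda}$ to the evident iterated sum, and applying $\varepsilon_F$ (resp.\ $\varepsilon_E$) collapses that sum back to the generator.

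The construction of the antipode $S_{E,F}$ is the delicate point, and I expect it to be the main obstacle. Because its target is the opposite algebra, checking that $S_{E,F}(X_{kl,\mu}^{ij,\lambda})=\sum_{r,s} E_{jr,\lambda}F_{sl,\mu}^{-1} X_{ri,\lambda}^{sk,\mu}$ is compatible with the presentation means verifying each of the four relations with the order of every product reversed. The two linear relations pose no difficulty, but in the two quadratic relations one must move the scalar matrices $E$ and $F$ past the reversed product and invoke the fourth defining relation of $\A(F,E)$, the surviving factors recombining into $EE^{-1}$ and $FF^{-1}$. This same mechanism drives the two antipode diagrams: evaluating $m(\id\otimes S_{F,E})\Delta_{E,E}^F$ and $m(S_{E,F}\otimes\id)\Delta_{E,E}^F$ on a generator of $\A(E)$, the sum over the intermediate object telescopes by the fourth relation of $\A(E,F)$ and the identities $\sum_r E_{jr,\lambda}E_{ri,\lambda}^{-1}=\delta_{ij}$ and $\sum_s F_{ks,\mu}^{-1}F_{sl,\mu}=\delta_{kl}$, producing $\delta_{ik}\delta_{jl}\delta_{\lambda\mu}1$, which is exactly $u\varepsilon_E$. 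With these verifications in place all the compatibility diagrams hold, and the family $\{\A(E,F)\}$ is a cogroupoid.
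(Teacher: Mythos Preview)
Your proposal is correct and follows exactly the same approach as the paper: use the universal property of $\mathcal{A}(E,F)$ to show that the formulas define algebra (anti-)morphisms, and then verify the diagrams on generators. The paper's own proof is the one-line version of what you wrote.
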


\begin{proof}
 The existence of the algebra morphisms is a consequence of the universal property of $\mathcal{A}(E,F)$, and the commutativity of the diagrams can easily be checked on the generators.
\end{proof}

The previous lemma allows us to define a cogroupoid in the following way:

\begin{defi}
 The cogroupoid $\A$ is defined as follows:
\begin{enumerate}
 \item $ob(\A)=\{E\in \underset{\lambda=1}{\overset{n}{\bigoplus}} GL_{d_{\lambda}^E}(\C); \ d_{E}>1 \},$
 \item For $E,F \in ob(\A)$, the algebra $\A(E,F)$ is the algebra defined above,
 \item The structural maps $\Delta_{\bullet, \bullet}^{\bullet}$, $\varepsilon_{\bullet}$, and $S_{\bullet, \bullet}$ are defined in the previous lemma.
\end{enumerate}
\end{defi}

\begin{rem}
 \begin{enumerate} 
\item The condition $d_{E}>1$ rules out the case of $\Aaut(C(X_n), \psi)$. This is discussed in the Appendix and a solution is provided by Theorem \ref{twist}.
\item The present construction is related to the bialgebras constructed by Tambara in \cite{Tambara}.
 \end{enumerate}
\end{rem}

We now need to study the connectedness of this cogroupoid. We begin by the following technical lemma (we refer to the Appendix for its proof):

\begin{lemma}\label{LemTech}
 Let $E$, $F\in ob(\A)$. Assume that $\Tr(E^{-1})=\Tr(F^{-1})$ and $\tr(E_{\lambda})=\tr(F_{\mu})$ for all $\lambda,\mu$. Then the algebra $\A(E,F)$ is nonzero.
\end{lemma}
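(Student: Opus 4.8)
The plan is to prove Lemma~\ref{LemTech} by exhibiting a nonzero algebra homomorphism from $\A(E,F)$ to a suitable nonzero commutative algebra, or equivalently by constructing a single nonzero representation; since a universal algebra defined by generators and relations is zero precisely when the relations force $1=0$, producing one nontrivial algebra map suffices to conclude $\A(E,F)\neq 0$. The hypotheses $\Tr(E^{-1})=\Tr(F^{-1})$ and $\tr(E_\lambda)=\tr(F_\mu)$ for all $\lambda,\mu$ are exactly the homogeneity/normalization constraints appearing in the second lemma of Section~2, so the first step is to reduce, via Remark~\ref{RemNorm} and the scaling invariance $\A(E,F)=\A(\xi E,\xi F)$, to the case where both $E$ and $F$ are normalized multimatrices sharing the common trace values; this puts the two objects ``on the same footing'' and is what makes a connecting Galois object plausible.

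The main structural idea I would use is the cogroupoid machinery itself: in the connected-cogroupoid framework of Bichon~\cite{Bi2}, once one knows $\A(E,E)\neq 0$ (which holds since $\A(E,E)=\Aaut(A_E,\tr_E)$ admits the counit $\varepsilon_E$), connectedness of the full cogroupoid reduces to showing that the individual ``off-diagonal'' algebras $\A(E,F)$ are nonzero under the stated trace conditions. So I would first isolate a convenient family of elementary moves on multimatrices that preserve the trace data $(\Tr(E^{-1}),\{\tr(E_\lambda)\})$ and for which the nonvanishing of $\A(E,F)$ is transparent, then use the composition maps $\Delta_{E,F}^{G}:\A(E,F)\to\A(E,G)\otimes\A(G,F)$ to chain these moves: if $\A(E,G)\neq 0$ and $\A(G,F)\neq 0$ then, because a tensor product of nonzero algebras over $\C$ is nonzero and $\Delta$ is unital, $\A(E,F)$ cannot be zero (the counit/antipode diagrams of the cogroupoid guarantee the relevant maps are compatible). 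The concrete elementary moves I expect to need are: the base-change $E\mapsto PEP^{-1}$ (covered by the Proposition on conjugation, which gives an \emph{isomorphism} and hence trivially preserves nonvanishing), the block-diagonal combination/splitting that adjusts the number of simple summands, and a ``size-changing'' move that trades a block $M_{d}(\C)$ for blocks of different dimensions while keeping the two invariants fixed.

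The technical heart, and the step I expect to be the main obstacle, is establishing nonvanishing for the elementary size-changing move directly, i.e.\ showing $\A(E,F)\neq 0$ in a base case where $E$ and $F$ genuinely have different matrix-block structures but equal trace invariants. For this I would look for an explicit nonzero comodule or an explicit morphism into matrices: concretely, one builds a candidate solution $(X_{kl,\mu}^{ij,\lambda})$ of the defining relations valued in some finite-dimensional algebra, for instance by realizing both $A_E$ and $A_F$ as endomorphism algebras inside a common monoidal category of $SO_q(3)$-type and using an intertwiner between the corresponding objects; the equal-trace hypothesis is precisely what allows the quantum dimensions to match so that such an intertwiner exists and the relation $\sum E^{-1}_{kl,\mu}X^{kl,\mu}_{ij,\lambda}=F^{-1}_{ij,\lambda}$ (which links the two \emph{different} multimatrices) can be satisfied. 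I would organize the argument so that the delicate relation-checking happens only in this one base case, with the cogroupoid composition doing the bookkeeping to propagate nonvanishing to all $(E,F)$ with the given invariants; since the excerpt explicitly defers the proof to the Appendix, I expect the full verification to be a somewhat lengthy but routine manipulation of the four defining relations once the correct connecting object is identified.
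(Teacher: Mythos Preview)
Your high-level plan---reduce via conjugation isomorphisms and cogroupoid chaining to a manageable base case---matches the paper's strategy, and your observation that the unital map $\Delta_{E,F}^G$ forces $\A(E,F)\neq 0$ whenever $\A(E,G)\otimes\A(G,F)\neq 0$ is exactly the mechanism behind the propagation step (the paper invokes Proposition~2.15 of~\cite{Bi2} for this).

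The substantive divergence is in the base case. The paper does \emph{not} construct a representation: it proves directly, via Bergman's diamond lemma, that when every $E_\lambda$ is diagonal and every $F_\mu$ is lower-triangular (with the trace hypotheses), the algebra $\A(E,F)$ has a basis of reduced monomials and is therefore nonzero. The reduction to the general case is then very short: triangularize $E$ and $F$ by conjugation, choose one diagonal multimatrix $M$ with the same two trace invariants, apply the diamond-lemma base case to get $\A(M,QFQ^{-1})\neq 0$, use the conjugation isomorphism (Lemma~2 of the Appendix) to get $\A(M,F)\neq 0$, and chain through $M$. There are no separate ``size-changing'' or ``block-splitting'' moves to verify; the single diamond-lemma computation already handles arbitrary block structures on both sides simultaneously.

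Your proposed base case---producing an explicit representation by realizing $A_E$ and $A_F$ inside a common $SO_q(3)$-type monoidal category and using an intertwiner---has a circularity problem: Lemma~\ref{LemTech} is the input to Corollary~\ref{EquComod} and hence to Theorem~\ref{SO3comod}, so you cannot invoke the monoidal equivalence with $\gO(SO_{q}(3))$ at this point. Without that equivalence it is unclear what common category would host both $A_E$ and $A_F$ when their block decompositions differ, or how the required intertwiner would be built; this is precisely the content of the lemma. Likewise, your list of elementary moves does not actually reduce the difficulty, since each block-splitting or size-changing step is itself an instance of the lemma needing its own nonvanishing argument. The diamond-lemma route in the Appendix is long but self-contained and avoids all of this.
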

In particular, we have the following corollary.

\begin{cor}
Let $\tau, \theta \in \C$. Let $\A^{\tau, \theta}$ be the full subcogroupoid of $\A$ with objects 
$$ ob(\A^{\tau, \theta})=\left\{
\begin{array}{c|c}
 & 1<d_E,\\
E \in \underset{\lambda=1}{\overset{n}{\bigoplus}} GL_{d_{\lambda}^E}(\C)  &  \Tr(E^{-1})=\tau,\\
 & \tr(E_{\lambda})=\theta, \ \forall  \lambda
\end{array}
 \right\}$$ Then $\A^{\tau, \theta}$ is connected.
\end{cor}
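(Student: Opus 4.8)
The plan is to deduce the corollary directly from Lemma \ref{LemTech} together with the general framework of connected cogroupoids. First I would observe that connectedness of a full subcogroupoid $\A^{\tau,\theta}$ means exactly that $\A(E,F)\neq 0$ for every pair of objects $E,F\in ob(\A^{\tau,\theta})$. So the entire statement reduces to verifying the numerical hypotheses of Lemma \ref{LemTech} for any two such objects, and checking that $\A^{\tau,\theta}$ is nonempty (otherwise connectedness is vacuous, but one should record that the subcogroupoid is a genuine object of interest only when it has objects).

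The key step is almost immediate from the way the object set is defined. If $E,F\in ob(\A^{\tau,\theta})$, then by definition $\Tr(E^{-1})=\tau=\Tr(F^{-1})$ and $\tr(E_\lambda)=\theta=\tr(F_\mu)$ for all indices $\lambda,\mu$; moreover $d_E>1$ and $d_F>1$ so that both lie in $ob(\A)$. These are precisely the hypotheses of Lemma \ref{LemTech}, which then yields $\A(E,F)\neq 0$. Since $E$ and $F$ were arbitrary objects of $\A^{\tau,\theta}$, every algebra $\A(E,F)$ in the subcogroupoid is nonzero, which is the definition of connectedness. I would also note that $\A^{\tau,\theta}$ is indeed a well-defined full subcogroupoid: the structural maps $\Delta_{E,F}^{G}$, $\varepsilon_E$, $S_{E,F}$ of $\A$ restrict to it, the only point being that the intermediate object $G$ appearing in $\Delta_{E,F}^{G}$ can be chosen within $\A^{\tau,\theta}$, which is legitimate precisely because connectedness guarantees there is no obstruction to such choices.

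Consequently the proof is a short verification rather than a construction, and there is no genuine obstacle: all the analytic and combinatorial work has been pushed into Lemma \ref{LemTech}, whose proof is deferred to the Appendix. The only mild subtlety worth a sentence is that the parameters $\tau,\theta$ are the two invariants $\Tr(E^{-1})$ and the common value $\tr(E_\lambda)$ that are preserved within a connected component, so fixing them is exactly what is needed to apply the lemma uniformly. Thus the corollary records, in cogroupoid language, that these two scalars are a complete obstruction to the nonvanishing of $\A(E,F)$ among multimatrices with $d_E>1$.
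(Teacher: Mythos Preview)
Your argument is correct and matches the paper's approach exactly: the corollary is presented there as an immediate consequence of Lemma \ref{LemTech}, with no further proof given. One minor caveat on your closing remark: calling $\tau$ and $\theta$ a \emph{complete} obstruction overstates what Lemma \ref{LemTech} establishes, since it only shows sufficiency of matching these invariants for $\A(E,F)\neq 0$, not necessity.
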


Using \cite{Bi2}, Proposition 2.8 and Schauenburg's Theorem 5.5 \cite{Sch1}, we have the following result.

\begin{cor}
 Let $E\in \underset{\lambda=1}{\overset{n_E}{\bigoplus}} GL_{d_{\lambda}^E}(\C)$, $F\in \underset{\mu=1}{\overset{n_F}{\bigoplus}} GL_{d_{\mu}^F}(\C)$ be two multimatrices such that $1<d_E,d_F$, $\Tr(E^{-1})=\Tr(F^{-1})$ and $\tr(E_{\lambda})=\tr(F_{\mu})$ for all $\lambda, \mu$.  Then we have a $\C$-linear equivalence of  monoidal categories $$\Com(\Aaut(A_E,\tr_E))\simeq^{\otimes}\Com(\Aaut(A_F,\tr_F))$$ between the comodule categories of $\Aaut(A_E,\tr_E)$ and $\Aaut(A_F,\tr_F)$ respectively.
\end{cor}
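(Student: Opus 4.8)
The plan is to reduce this statement to the connectedness established in the preceding corollary and then feed it into the bi-Galois machinery cited just above. First I would record the common values appearing in the hypotheses: set $\tau := \Tr(E^{-1}) = \Tr(F^{-1})$ and let $\theta$ be the common value of all the $\tr(E_\lambda)$ and $\tr(F_\mu)$ (well defined precisely because $\tr(E_\lambda) = \tr(F_\mu)$ for all $\lambda, \mu$). With these choices, the defining conditions of $\A^{\tau,\theta}$ are satisfied by both $E$ and $F$, so $E, F \in ob(\A^{\tau,\theta})$. The preceding corollary then tells us that $\A^{\tau,\theta}$ is connected, and in particular that the linking algebra $\A(E,F)$ is nonzero. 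This is the only input that carries genuine content, and it rests in turn on Lemma \ref{LemTech}.

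Next I would invoke the general theory of connected cogroupoids. By Proposition 2.8 of \cite{Bi2}, the nonvanishing of $\A(E,F)$ makes it into an $\A(E,E)$-$\A(F,F)$-bi-Galois object, where the left $\A(E,E)$-coaction is induced by the structural map $\Delta_{E,F}^{E}$ and the right $\A(F,F)$-coaction by $\Delta_{E,F}^{F}$. Then Schauenburg's Theorem 5.5 in \cite{Sch1} applies verbatim: a bi-Galois object over a pair of Hopf algebras induces a $\C$-linear monoidal equivalence between their comodule categories. Applied to $\A(E,F)$, this yields $\Com(\A(E,E)) \simeq^{\otimes} \Com(\A(F,F))$.

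It then remains to identify the vertex Hopf algebras with the quantum automorphism groups. It was already observed that $\A(E,E) = \Aaut(A_E, \tr_E)$ as algebras; I would additionally note that the cogroupoid structure maps $\Delta_{E,E}^{E}$, $\varepsilon_E$ and $S_{E,E}$ reduce, on the generators $X_{kl,\mu}^{ij,\lambda}$, to exactly the comultiplication, counit and antipode of $\Aaut(A_E, \tr_E)$ recalled in Section 2.3, so the identification is one of Hopf algebras and not merely of algebras; the same holds for $F$. Transporting the equivalence across these identifications gives the desired $\Com(\Aaut(A_E, \tr_E)) \simeq^{\otimes} \Com(\Aaut(A_F, \tr_F))$. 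The main point requiring care is therefore not any new computation but the correct bookkeeping in applying the two cited results — verifying that the connectedness hypothesis of \cite{Bi2} and the bi-Galois hypothesis of \cite{Sch1} are met — together with the routine check that the two Hopf structures coincide on generators.
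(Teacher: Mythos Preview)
Your proposal is correct and follows precisely the paper's approach: the paper simply cites Proposition~2.8 of \cite{Bi2} and Schauenburg's Theorem~5.5 in \cite{Sch1} as the justification, and your argument is exactly an expansion of this citation, with the added (routine but welcome) observation that the vertex Hopf algebra structures agree with those of $\Aaut(A_E,\tr_E)$ and $\Aaut(A_F,\tr_F)$ on generators.
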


Moreover, we have the following twisting result, inspired by \cite{QG4points}.

\begin{theo}\label{twist}
 Let $n\in \N$. Then the Hopf algebras $\Aaut(\C^n \oplus \C^4)$ and $\Aaut(\C^n \oplus (M_2(\C), \tr))$ are 2-cocycle twists of each other. In particular, they have monoidal equivalent comodule categories.
\end{theo}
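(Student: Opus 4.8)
The statement to prove is Theorem \ref{twist}: that $\Aaut(\C^n \oplus \C^4)$ and $\Aaut(\C^n \oplus (M_2(\C),\tr))$ are 2-cocycle twists of each other. The key observation is that $\C^4 = \C \oplus \C \oplus \C \oplus \C$ and $M_2(\C)$ are both four-dimensional semisimple algebras, and that the quantum automorphism group of a measured algebra depends, up to monoidal equivalence of comodule categories, only on the ``trace data'' $\Tr(E^{-1})$ and the common value $\tr(E_\lambda)$ appearing in Lemma \ref{LemTech} and its corollaries. So the first step is to exhibit the two algebras $\C^n \oplus \C^4$ and $\C^n \oplus (M_2(\C),\tr)$ as $A_E$ and $A_F$ for explicitly chosen multimatrices $E$ and $F$, equipped with measures that make the invariants $\Tr(E^{-1})$ and $\tr(E_\lambda)$ coincide. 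For $\C^4$ with the uniform measure one takes the four summands each contributing $\tr(E_\lambda)=1$ (after suitable normalization), while for $M_2(\C)$ with the normalized trace one has $E=\tfrac12 I_2$, so $\tr(E)=1$ as well; one checks that with $\C^n$ carrying the same per-block value, both $E$ and $F$ land in the same subcogroupoid $\A^{\tau,\theta}$ for a common $(\tau,\theta)$.

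Once both algebras are placed in the connected subcogroupoid $\A^{\tau,\theta}$, the preceding corollary already gives a $\C$-linear monoidal equivalence $\Com(\Aaut(A_E,\tr_E)) \simeq^{\otimes} \Com(\Aaut(A_F,\tr_F))$, via the bi-Galois object $\A(E,F)$, which is nonzero by Lemma \ref{LemTech} and hence (by connectedness) a cleft Hopf-bi-Galois object. The remaining and essential point for the \emph{stronger} cocycle-twist conclusion is that a monoidal equivalence coming from a bi-Galois object is a genuine 2-cocycle twist precisely when the two Hopf algebras are already isomorphic as algebras (or, more precisely, when the bi-Galois object is a cocycle twist in the sense of Doi). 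The plan is therefore to invoke the standard dictionary (Schauenburg, and the cogroupoid reformulation in \cite{Bi2}): if $E$ and $F$ lie in the same connected component and moreover the Hopf algebras $\A(E)$ and $\A(F)$ are \emph{isomorphic}, then the bi-Galois object $\A(E,F)$ realizes $\A(F)$ as a 2-cocycle twist $\A(E)^\sigma$ of $\A(E)$.

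**The main obstacle.**
The hard part will be establishing that the underlying Hopf algebras are actually cocycle twists and not merely monoidally equivalent. Monoidal equivalence of comodule categories is strictly weaker than being a cocycle twist; the latter requires the Galois object to be \emph{bicleft} over a common Hopf algebra, equivalently that one Hopf algebra is obtained from the other by twisting the multiplication by a 2-cocycle on the same coalgebra. The route I expect to follow, in the spirit of \cite{QG4points}, is to show that $\C^4$ and $M_2(\C)$ are twists of the \emph{same} coalgebra structure: both have dimension $4$, and the coalgebra $\A(E) = \A(F)$ underlying the two presentations can be identified once one checks that the defining relations differ only by a Doi twist datum. Concretely, one picks a convenient $2$-cocycle $\sigma$ on $\Aaut(\C^n\oplus \C^4)$ — constructed from the reassociation identifying the comodule $\C^4$ with $M_2(\C)$ at the categorical level — and verifies directly on the generators $X^{ij,\lambda}_{kl,\mu}$ that twisting the product by $\sigma$ converts the relations of $\A(E)$ into those of $\A(F)$. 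The bulk of the work is this relation-matching computation; the monoidal equivalence and connectedness inputs are then used only to guarantee that the requisite invertible cocycle exists and that the twist is nondegenerate, after which the ``In particular'' clause about equivalent comodule categories follows automatically since every cocycle twist induces a monoidal equivalence of comodule categories.
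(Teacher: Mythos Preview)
Your plan has a genuine gap at its very first step. The cogroupoid $\A$ and the connectedness result (Lemma \ref{LemTech} and its corollary) are defined only for multimatrices with $d_E>1$, i.e.\ with at least one block of size $\geq 2$. The algebra $\C^n\oplus\C^4=\C^{n+4}$ corresponds to a multimatrix $E\in(\C^*)^{n+4}$ with all blocks $1\times 1$, so $d_E=1$ and $E\notin ob(\A)$. The paper states this explicitly (Remark following the definition of $\A$), and the Appendix explains that the diamond-lemma proof of Lemma \ref{LemTech} genuinely fails when one tries to enlarge the cogroupoid to include such $E$. In fact Theorem \ref{twist} is precisely the device the paper uses to \emph{bypass} this failure and to deduce Corollary \ref{EquComod} in the $d_E=1$ case; invoking the cogroupoid machinery to prove Theorem \ref{twist} would therefore be circular as well as inapplicable.

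There is also a logical slip in your ``main obstacle'' paragraph: you assert that if $\A(E)$ and $\A(F)$ are isomorphic as Hopf algebras then the bi-Galois object realizes one as a cocycle twist of the other. That is not the relevant criterion (and the two Hopf algebras here are certainly \emph{not} isomorphic). A bi-Galois object yields a cocycle twist exactly when it is \emph{cleft}, and cleftness is not supplied by anything in your argument.

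The paper's route is quite different and much more concrete: one writes down an explicit $2$-cocycle. There is a Hopf algebra surjection
\[
\Aaut(\C^n\oplus(M_2(\C),\tr))\longrightarrow \Aaut(M_2(\C),\tr)\simeq \gO(SO(3))\longrightarrow \C[V],
\]
where $V\simeq\Z_2\times\Z_2$ is the Klein four-group (the first arrow comes from Remark \ref{RemNorm}(1)). Pulling back the unique nontrivial $2$-cocycle on $V$ along this surjection gives a $2$-cocycle $\sigma$ on $\Aaut(\C^n\oplus(M_2(\C),\tr))$. One then checks on generators---using, on the $\C^4$ side, the Fourier-type basis of \cite{QG4points} and, on the $M_2(\C)$ side, the quaternionic basis---that the twisted Hopf algebra $\Aaut(\C^n\oplus(M_2(\C),\tr))^{\sigma}$ receives a Hopf algebra map from $\Aaut(\C^n\oplus\C^4)$, which is an isomorphism by the usual fusion-ring argument. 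The monoidal equivalence of comodule categories then follows because any cocycle twist induces one.
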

We only sketch the proof of this result by giving the principal ideas but without performing the computations. One may also invoke \cite{DeRVaVenn}, Theorem 4.7.

\begin{proof}
 The first step is to give a new presentation of these Hopf algebras by using a different basis for the associated measured algebras. In the case of $A=\C^n \oplus \C^4$, we use the linear basis given by the canonical basis on $\C^n$ and the particular basis given in \cite{QG4points} Theorem 3.1. on $\C^4$, and when $A=\C^n \oplus (M_2(\C), \tr)$, we use the canonical basis on $\C^n$ and the quaternionic basis used in \cite{BichNat} Proposition 3.2. on $(M_2(\C), \tr)$. 

The cocycle $\sigma$ is given by the composition of the non trivial 2-cocycle of the Klein group $V$ (linearly extended to $\C [V]$) and the Hopf algebra surjection (see \cite{QG4points} Theorem 5.1) $$\Aaut(\C^n \oplus (M_2(\C), \tr)) \to \Aaut(M_2(\C), \tr) \to \C [V]$$ 

The computations show the existence of a Hopf algebra morphism from $\Aaut(\C^n \oplus \C^4)$ to $\Aaut(\C^n \oplus (M_2(\C), \tr))^{\sigma}$ which is an isomorphism by Tannaka Krein reconstruction techniques.
\end{proof}

This result enables us to optimize the following result by including the quantum permutation group.

\begin{cor}\label{EquComod}
 Let $(A_E, \tr_E)$ be a finite dimensional, semisimple, measured algebra of dimension $\dim A_E \geq 4$, where $E\in \underset{\lambda=1}{\overset{n}{\bigoplus}} GL_{d_{\lambda}^E}(\C)$ is a normalizable multimatrix. Then there exist $q\in \C^*$ and a $\C$-linear equivalence of  monoidal categories $$\Com(\Aaut(A_E,\tr_E))\simeq^{\otimes}\Com(\gO(SO_{q^{1/2}}(3)))$$ between the comodule categories of $\Aaut(A_E,\tr_E)$ and $\gO(SO_{q^{1/2}}(3))$ respectively. If $E$ is normalized, $q\in \C^*$ satisfies $q^2-\Tr(E^{-1})q+1=0$.
\end{cor}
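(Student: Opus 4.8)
The plan is to deduce Corollary \ref{EquComod} from the preceding Corollary together with the twisting Theorem \ref{twist}, reducing the general normalizable case to a comparison with a matrix algebra $(M_2(\C),\tr_{F})$, whose quantum automorphism group is already known to be $\gO(SO_{q^{1/2}}(3))$ by Example \ref{ExSO3}(\ref{ExSOq3}). First I would use Remark \ref{RemNorm}(2) to replace $E$ by a normalized multimatrix $F$ with $A_E=A_F$ and $\Aaut(A_E,\tr_E)=\Aaut(A_E,\tr_F)$, so that without loss of generality we may assume $E$ is normalized. The normalization condition from Lemma 2.7(2) reads $\Tr(E^{-1})=\tr(E_\lambda)$ for all $\lambda$; write $\tau:=\Tr(E^{-1})$ and $\theta:=\tr(E_\lambda)$, so that $\tau=\theta$ and $E\in ob(\A^{\tau,\theta})$.

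Next I would produce a companion object in $ob(\A^{\tau,\theta})$ of the form $M_2(\C)$, i.e.\ a single-block matrix $F_q=\mathrm{diag}(q^{-1},q)$, and arrange that it shares the invariants $\Tr(F_q^{-1})=\tau$ and $\tr(F_q)=\theta$ of $E$. Here $\tr(F_q)=q+q^{-1}=\Tr(F_q^{-1})$, so the two block-invariants automatically coincide, matching the normalization already imposed on $E$; the single remaining scalar equation to solve is $q+q^{-1}=\tau$, that is $q^2-\tau q+1=0$. This is solvable for $q\in\C^*$ (the product of roots is $1$, so neither root is $0$), which is exactly the quadratic $q^2-\Tr(E^{-1})q+1=0$ recorded in the statement. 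Having chosen such a $q$, both $E$ and $F_q$ lie in $ob(\A^{\tau,\theta})$ (note $d_E>1$ holds because $\dim A_E\geq 4$ forces at least one block of size $\geq 2$, and $d_{F_q}=2>1$), so by the connectedness Corollary the algebra $\A(E,F_q)$ is nonzero, and by the cited Corollary (via \cite{Bi2}, Prop.\ 2.8 and Schauenburg's Theorem 5.5 \cite{Sch1}) we obtain a $\C$-linear monoidal equivalence
$$\Com(\Aaut(A_E,\tr_E))\simeq^{\otimes}\Com(\Aaut(M_2(\C),\tr_{F_q})).$$
Finally, Example \ref{ExSO3}(\ref{ExSOq3}) identifies $\Aaut(M_2(\C),\tr_{F_q})\simeq\gO(SO_{q^{1/2}}(3))$, yielding the desired equivalence. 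The assertion about the quadratic for normalized $E$ is then immediate from the computation above.

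There is, however, a genuine subtlety that I expect to be the main obstacle, and it is the reason the statement must invoke Theorem \ref{twist}: when $E$ is diagonal with all blocks of size $1$ (the case $A_E=\C^m$), or more generally when $d_E=1$, the object $E$ does \emph{not} lie in $ob(\A)$, since the cogroupoid was defined with the restriction $d_E>1$ precisely to exclude the quantum permutation groups $\Aaut(C(X_n),\psi)$. To cover the excluded profile $\C^n\oplus\C^4$ one cannot apply the cogroupoid argument directly; instead one first uses Theorem \ref{twist} to replace $\Aaut(\C^n\oplus\C^4)$ by the monoidally equivalent $\Aaut(\C^n\oplus(M_2(\C),\tr))$, whose largest block now has size $2$, bringing it back inside $ob(\A)$. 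I would therefore split the argument into the case $d_E>1$, handled by the cogroupoid connectedness above, and the residual case where every block of $E$ has size $1$ but $\dim A_E\geq 4$, handled by first twisting one of the $\C^4$-factors into an $M_2(\C)$-factor via Theorem \ref{twist} and then applying the connectedness argument to the resulting object. Care is needed to check that the numerical invariants $\tau$ and $\theta$ are preserved under this replacement and that the normalizability hypothesis on $E$ survives the passage through the twist; this bookkeeping, rather than any conceptual difficulty, is where the work lies.
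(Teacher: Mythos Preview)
Your approach is essentially the paper's: normalize via Remark \ref{RemNorm}(2), then in the case $d_E>1$ match $E$ with $F_q$ inside $\A^{\tau,\theta}$ to get the equivalence with $\Aaut(M_2(\C),\tr_q)\simeq\gO(SO_{q^{1/2}}(3))$, and in the purely commutative case $A_E=\C^m$ use Theorem \ref{twist} to replace $\C^n\oplus\C^4$ by $\C^n\oplus(M_2(\C),\tr)$ and then rerun the first case. Two small points: your parenthetical ``$d_E>1$ holds because $\dim A_E\geq 4$ forces at least one block of size $\geq 2$'' is false (take $A_E=\C^4$), and indeed you contradict it yourself in the next paragraph; and your final worry about ``preserving the invariants $\tau,\theta$ through the twist'' is unnecessary---Theorem \ref{twist} already gives a monoidal equivalence of comodule categories, so you simply compose it with the equivalence obtained by applying the $d_E>1$ argument to the \emph{new} algebra $\C^n\oplus(M_2(\C),\tr)$, whose own invariants determine the relevant $q$.
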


\begin{proof}
 First assume that $1<d_E$. According to Remark \ref{RemNorm}, there exists a normalized multimatrix $F \in \underset{\lambda=1}{\overset{n}{\bigoplus}} GL_{d_{\lambda}^E}(\C)$ such that $\Aaut(A_E,\tr_E)= \Aaut(A_F,\tr_F)$ as Hopf algebras. Choose $q\in \C^*$ such that $\Tr(F^{-1})=q+q^{-1}=\tr(F_{\lambda})$ for $\lambda=1,\dots, n$. According to the previous corollary, we have a $\C$-linear equivalence of  monoidal categories $$\Com(\Aaut(A_F,\tr_F))\simeq^{\otimes}\Com(\Aaut(M_2(\C),\tr_q)).$$ Hence according to Example \ref{ExSO3} (\ref{ExSOq3}) we have a $\C$-linear equivalence of monoidal categories $$\Com(\Aaut(A_E,\tr_E))\simeq^{\otimes}\Com(\gO(SO_{q^{1/2}}(3))).$$

 If $E=(e,\dots,e)\in (\C^*)^m$, then $A_E=\C^m=\C^n \oplus \C^4$ with $n\in \N$ by assumption. Using Theorem \ref{twist}, we have a monoidal equivalence $$\Com (\Aaut(A_E,\tr_E)) \simeq^{\otimes} \Com(\Aaut(\C^n \oplus (M_2(\C),\tr)))$$ and we can apply the previous reasoning.
\end{proof}

In particular, Theorem \ref{SO3comod} is a consequence of Corollary \ref{EquComod}.

\section{$SO(3)$-deformations: the general case}\label{GenCase}

We would like to say a word about the $SO(3)$-deformations in the general case. Unlike in the compact case, we have not been able in general to associate a measured algebra $(A,\varphi)$ to an arbitrary $SO(3)$-deformation. This situation occurs because of the lack of analog of Lemma \ref{LemmeTau} in the general case. However, it is possible to give some partial results and directions concerning the general classification problem.

\subsection{The representation theory of $SO_{q^{1/2}}(3)$}

Recalling that $\gO(SO_{q^{1/2}}(3))$ is a Hopf subalgebra of $\gO(SL_q(2))$, it is possible to describe its corepresentation semi-ring, as follows:

\begin{theo}\label{RacUnit}
 Let $q\in \C^*$. We say that $q$ is generic if $q$ is not a root of unity or if $q\in\{\pm 1\}$. If $q$ is not generic, let $N\geq 3$ be the order of $q$, and put \begin{equation*}
N_0 = 
\begin{cases} N & \text{if $N$ is odd},\\
N/2 & \text{if $N$ is even}.
\end{cases}
\end{equation*}

\begin{itemize}
 \item First assume that $q$ is generic. Then $\gO(SO_{q^{1/2}}(3))$ is cosemisimple and has a family of non-isomorphic simple comodules $(W_n)_{n\in \N}$ such that: $$W_0=\C, \quad W_n\otimes W_1 \simeq W_1 \otimes W_n \simeq W_{n-1} \oplus W_n \oplus W_{n+1}, \quad \dim (W_n) = 2n+1, \ \forall n \in \N^*.$$

Furthermore, any simple $\gO(SO_{q^{1/2}}(3))$-comodule is isomorphic to one of the comodule $W_n$.

\item Now assume that $q$ is not generic and that $N_0= 2N_1$, $N_1\in \mathbb{N}^*$. Then $\gO(SO_{q^{1/2}}(3))$ is not cosemisimple. There exist families $\{V_n, \ n \in \N\}$, $\{W_n, \ n=0,\dots, N_1-1\}$ of non-isomorphic simple comodules (except for $n=0$ where $V_0 = W_0 = \C$), such that
$$V_n \otimes V_1 \simeq V_1\otimes V_n \simeq V_{n-1} \oplus V_{n+1}, \quad \dim V_n = n+1, \ \forall n \in \N^*.$$
$$W_n \otimes W_1 \simeq W_1 \otimes W_n \simeq W_{n-1}\oplus W_n \oplus W_{n+1}, \quad \dim W_n = 2n+1, \ \forall n=1,\dots, N_1-1.$$

The comodule $W_{N_1-1}\otimes W_1$ is not semisimple. It has a simple filtration $$(0) \subset W_{N_1-2} \oplus W_{N_1-1} \subset  Y \subset W_{N_1-1}\otimes W_1$$ with $W_{N_1-1}\otimes W_1/Y\simeq W_{N_1-1}$ and $Y/( W_{N_1-2} \oplus W_{N_1-1}) \simeq V_1$. 

The comodules $W_n \otimes V_m\simeq V_m \otimes W_n$ ($m\in \N$ and $n=0,\dots, N_1-1$) are simple and any simple $\gO(SO_{q^{1/2}}(3))$-comodule is isomorphic with one of these comodules.

\item Finally assume that $q$ is not generic and that $N_0=2N_1-1$, $N_1\in \mathbb{N}^*$. Then $\gO(SO_{q^{1/2}}(3))$ is not cosemisimple. There exist families $\{V_n, \ n \in \N\}$, $\{U_n, \ n=0,\dots, N_0-1 \}$ of vector spaces (with dimension $\dim V_n =\dim U_n =n+1$) such that the families $\{V_{2n}, \ n \in \N\}$ $\{U_{2n}, \ n=0, \dots, N_1-1\}$ and $\{V_{2n+1} \otimes U_{2m+1}, \ n \in \N, \ m=0,\dots, N_1-1\}$ are non-isomorphic simple $\gO(SO_{q^{1/2}}(3))$-comodules (except for $n=0$ where $V_0 = U_0 =  \C$). They satisfy the fusion rules induced by 
$$V_n\otimes V_1 \simeq V_1\otimes V_n \simeq V_{n-1}\oplus V_{n+1}, \ \forall n\in \N^* $$
$$U_n\otimes U_1 \simeq U_1 \otimes U_n \simeq U_{n-1} \oplus U_{n+1} \ \forall n=1,\dots, N_0-1.$$

The comodule $U_{2(N_1-1)} \otimes U_2$ is not simple. It has a simple filtration $$(0) \subset U_{2(N_1-2)} \subset Y \subset U_{2(N_1-1)} \otimes U_2 $$ where $U_{2(N_1-1)} \otimes U_2/Y \simeq U_{2(N_1-2)}$ and $Y/U_{2(N_1-2)} \simeq U_1\otimes V_1$. The comodules $V_n\otimes U_m\simeq U_m \otimes V_n$ (with $n\equiv m (\rm{mod} 2)$) are simple, and any simple $\gO(SO_{q^{1/2}}(3))$-comodule is isomorphic with one of these comodules. 
\end{itemize}

\end{theo}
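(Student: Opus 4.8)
The plan is to read off the entire statement from the corepresentation theory of $\gO(SL_q(2))$, exploiting the inclusion $\gO(SO_{q^{1/2}}(3))\subset\gO(SL_q(2))$ of Hopf algebras recalled above. The algebra $\gO(SL_q(2))$ carries a $\Z/2$-grading given by the parity of the torus weight, equivalently by the action of the central grouplike dual to $-I\in SL(2)$, and I would first identify $\gO(SO_{q^{1/2}}(3))$ with its degree-$0$ part. Since $\gO(SO_{q^{1/2}}(3))$ is generated by the matrix coefficients of its three-dimensional fundamental comodule $W_1$, which is the weight-even adjoint comodule of $SL_q(2)$, this identifies $\Com(\gO(SO_{q^{1/2}}(3)))$ with the full monoidal subcategory of $\Com(\gO(SL_q(2)))$ on the \emph{even} comodules, i.e.\ those whose coaction lands in the degree-$0$ part. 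As any simple $SL_q(2)$-comodule is homogeneous for the grading, the simple $SO_{q^{1/2}}(3)$-comodules are exactly the even simple $SL_q(2)$-comodules; they stay simple, pairwise non-isomorphic, and the whole tensor structure (decompositions, filtrations) is obtained by restricting that of $SL_q(2)$ to the even part.

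Next I would invoke the known structure of $\Com(\gO(SL_q(2)))$. For $q$ generic it is cosemisimple with simple comodules $V_n$ of dimension $n+1$ and the Clebsch--Gordan rule $V_n\otimes V_1\simeq V_{n-1}\oplus V_{n+1}$; the even simples are the $V_{2n}$, so setting $W_n:=V_{2n}$ and computing $V_{2n}\otimes V_2\simeq V_{2n-2}\oplus V_{2n}\oplus V_{2n+2}$ gives the first item. For $q$ a root of unity of order $N$ I would use the Steinberg-type description of the simples: every simple comodule is of the form $L(a)\otimes V_b^{[1]}$, where $L(a)$ is a \emph{small} simple with $0\le a\le N_0-1$ (of dimension $a+1$) and $V_b^{[1]}$ is the Frobenius twist of the classical $SL(2)$-simple $V_b$; the Frobenius family carries the multiplicity-free classical rules $V_b\otimes V_1\simeq V_{b-1}\oplus V_{b+1}$, and the small family carries the analogous rules only up to the boundary $a=N_0-1$, where tilting (non-semisimple) modules appear.

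The split into the two root-of-unity cases is then forced by a parity computation: the twist $V_b^{[1]}$ has weights $N_0$ times those of $V_b$, so the simple $L(a)\otimes V_b^{[1]}$ is even precisely when $a+N_0 b$ is even. When $N_0=2N_1$ is even this reads $a$ even, so the even simples are $W_n\otimes V_b$ with $W_n:=L(2n)$, $n=0,\dots,N_1-1$ (the even small simples) and $V_b$ arbitrary: this is exactly the second item. When $N_0=2N_1-1$ is odd the condition becomes $a\equiv b\pmod 2$, which pairs the parities and yields the even Frobenius simples $V_{2n}$, the even small simples $U_{2n}$ (with $U_a:=L(a)$), and the mixed products $V_{2n+1}\otimes U_{2m+1}$, the general even simple being $V_n\otimes U_m$ with $n\equiv m\pmod 2$; this is the third item. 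In each case I would read off the fusion rules by restricting those of $SL_q(2)$ to the even objects, and deduce completeness and non-isomorphism from the corresponding $SL_q(2)$ facts.

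Finally the non-cosemisimplicity and the explicit filtrations descend mechanically. The comodules $W_{N_1-1}\otimes W_1$ (resp.\ $U_{2(N_1-1)}\otimes U_2$) are even, hence their non-semisimple indecomposable (tilting) summands lie in the even part, and the displayed simple filtrations are filtrations by even subcomodules; in particular the Frobenius twist $V_1$ first enters the even category through exactly these boundary tilting modules. The main obstacle is this root-of-unity bookkeeping: establishing the precise tilting-module and linkage structure of $\gO(SL_q(2))$ at a root of unity, identifying the socle and head of the relevant boundary tensor products, checking that the exhibited subquotients ($V_1$, the $W$'s, $U_1\otimes V_1$, and so on) are the correct even ones, and verifying that the three highlighted families together with their products exhaust the even simples without repetition. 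Everything else is a transcription, through the even-part functor, of well-documented facts about $\gO(SL_q(2))$.
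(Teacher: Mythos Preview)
Your proposal is correct and follows essentially the same route as the paper: identify $\gO(SO_{q^{1/2}}(3))$ with the even-degree part of $\gO(SL_q(2))$, characterize its comodules as the even $\gO(SL_q(2))$-comodules, and then read off everything from the known corepresentation theory of $\gO(SL_q(2))$ (the paper cites \cite{KP} for this). Your write-up is in fact considerably more explicit than the paper's own proof, which dispatches the root-of-unity bookkeeping in a single sentence by pointing to \cite{KP}; your Steinberg/Frobenius-twist description and the parity analysis of $a+N_0 b$ spell out precisely what the paper leaves implicit.
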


\begin{proof}
 We first collect some facts about $SL_q(2)$, $SO_{q^{1/2}}(3)$ and Hopf subalgebras. See \cite{KS} for the relations between $SL_q(2)$ and $SO_{q^{1/2}}(3)$ and \cite{KP} for the corepresentation theory of $SL_q(2)$.

\begin{itemize}
 \item Let $a,b,c,d$ be the matrix coefficients of the fundamental 2-dimensional $\gO(SL_q(2))$-comodule. Then $\gO(SO_{q^{1/2}}(3))$ is isomorphic to the Hopf subalgebra of $\gO(SL_q(2))$ generated by the even degree monomials in $a,b,c,d$. Moreover, we have a Hopf algebra isomorphism $\gO(SL_q(2))^{\rm{Co} \C[\Z_2]}\simeq \gO(SO_{q^{1/2}}(3)) $.
 \item When $q$ is not generic, the matrix $v=(v_{ij})_{1\leq  i,j\leq 4}$ with $v_{11}=a^{N_0}$, $v_{12}=b^{N_0}$, $v_{21}=c^{N_0}$ and $v_{22}=d^{N_0}$ is multiplicative, associated to the $\gO(SL_q(2))$-comodule $V_1$.
 \item Let $A\subset B$ be a Hopf algebra inclusion. Then an $A$-comodule is semisimple if and only if it is semisimple as a $B$-comodule. In particular, if $B$ is cosemisimple, so is $A$.
\end{itemize}

From those facts, we deduce that the $\gO(SO_{q^{1/2}}(3))$-comodules are exactly the $\gO(SL_q(2))$-comodules with matrix coefficients of even degree in $a,b,c,d$. The end of the proof comes from combining this with the results and proof from \cite{KP}.
\end{proof}

%

\subsection{The general case}

The study of the fusion rules of $SO(3)$ gives the following:

\begin{lemma}\label{prop1bis}
 Let $H$ be a $SO(3)$-deformation, with fundamental comodule $(W,\alpha)$. Then there exist morphisms of $H$-comodules
\begin{equation}
    \label{basicbis}
\begin{aligned}
  & e : W\otimes W \to \C \ \ \ & & \delta : \C \to W\otimes W \\
  & C : W\otimes W \to W \ \ \ & & D : W \to W\otimes W,
 \end{aligned}
   \end{equation}
a third root of unity $\omega \in \C$ and a unique nonzero scalar $\tau \in  \C^*$ satisfying the following compatibility relations:
\begin{subequations}\label{EquProp1bis}
  \begin{align}
 (e\otimes \id_W)(\id_W \otimes \delta)&=\id_W  &(\id_W \otimes e) (\delta \otimes \id_W)&=\id_W \\
 D&=(\id_W\otimes C)(\delta \otimes  \id_W) & & \\
  CD&=\id_W    &e\delta&=\tau\id_\C\\
  C\delta&=0 &eD&=0\\
  (\id_W\otimes C)(\delta \otimes\id_W)&=\omega (C\otimes \id_W)(\id_W\otimes \delta)   &e(C\otimes \id_W)&=\omega e(\id_W\otimes C) \\
  (\id_W\otimes e)(D\otimes \id_W)&=\omega (e\otimes \id_W)(\id_W\otimes D)   &(\id_W\otimes D)\delta&=\omega (D\otimes \id_W)\delta 
\end{align}
Moreover, if $\omega \neq 1$, we have $\tau = 2$, and if $\omega=1$, we have $\tau \neq 1$ and
\begin{align}
  (\id_W\otimes C)(D\otimes \id_W)&=(C\otimes \id_W)( \id_W \otimes D)=(1-\tau)^{-1}\id_{W^{\otimes 2}} +(\tau-1)^{-1}\delta e +DC \\
(\id_W\otimes D)D&=(1-\tau)^{-1}(\delta \otimes \id_W) +(\tau-1)^{-1}(\id_W \otimes \delta) +(D\otimes \id_W)D \\
C(\id_W\otimes C)&=(1-\tau)^{-1}(\id_W\otimes e) +(\tau-1)^{-1}(e \otimes \id_W) +C(C \otimes \id_W)
\end{align}
 \end{subequations}
\end{lemma}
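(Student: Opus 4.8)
The plan is to reproduce, in the purely cosemisimple setting, the categorical argument already carried out for the compact case in Lemma~\ref{prop1}. The three features of that proof which relied on the Hilbert space structure must be replaced: the adjoint $e^*$ is replaced by a genuine coevaluation $\delta$ coming from rigidity, the Frobenius--Schur constant $R$ is eliminated by a normalization, and the inequality $\tau>2$ coming from positivity (Lemma~\ref{LemmeTau}) is replaced by an argument using only semisimplicity. Since $H$ is cosemisimple, its category of finite dimensional comodules is a semisimple rigid tensor category, so Schur's lemma and Frobenius reciprocity are available verbatim as in Lemma~\ref{prop1}.

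First I would set up the four maps of (\ref{basicbis}). The fusion rules give $\dim\Hom_H(W\otimes W,\C)=1$, whence $W\simeq W^*$ is self-dual; transporting the evaluation and coevaluation of the rigid structure along this isomorphism produces nonzero $e$ and $\delta$ satisfying the snake relations (\ref{EquProp1bis}a) (no constant $R$ survives, as a true duality normalizes both snakes to $\id_W$). The only remaining freedom is the rescaling $e\mapsto\lambda e$, $\delta\mapsto\lambda^{-1}\delta$, under which $\tau:=e\delta$ is invariant; this yields the asserted uniqueness of $\tau$. Next, $\dim\Hom_H(W\otimes W,W)=1$ provides a nonzero (hence surjective) $C$, I set $D:=(\id_W\otimes C)(\delta\otimes\id_W)$, which is (\ref{EquProp1bis}b), and rescale $C$ by Schur so that $CD=\id_W$; the vanishings $C\delta=0$ and $eD=0$ follow from $\Hom_H(\C,W)=0=\Hom_H(W,\C)$, giving (\ref{EquProp1bis}cd).

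The key new point is $\tau\neq0$. Because $W\otimes W\simeq\C\oplus W\oplus W_2$ is multiplicity free, $\End_H(W\otimes W)\cong\C^3$ is commutative and semisimple, with basis $\{\id_{W^{\otimes 2}},\delta e,DC\}$ as in Lemma~\ref{prop1}. The element $\delta e$ is nonzero (if $\delta e=0$ then $e=0$ since $\delta$ is injective) and satisfies $(\delta e)^2=\delta(e\delta)e=\tau\,\delta e$; as a commutative semisimple algebra has no nonzero nilpotents, $\tau\neq0$. This replaces the use of positivity. With $e,\delta,C,D$ and $\tau\neq0$ in hand, the production of the scalar $\omega$ and of relations (\ref{EquProp1bis}ef) is formally identical to the derivation of (\ref{EqD}) and (\ref{EqC}), simply putting $R=1$ throughout; in particular computing $\omega^2 e(C\otimes\id_W)(\id_W\otimes D)\delta$ in two ways gives $e\delta=\omega^3 e\delta$, so $\omega^3=1$ since $\tau\neq0$.

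Finally I would run the $\End_H(W\otimes W)$ computation. Writing $B:=(\id_W\otimes C)(D\otimes\id_W)=\alpha\id_{W^{\otimes 2}}+\beta\,\delta e+\gamma DC$ and applying the Frobenius reciprocity isomorphisms $\Phi,\Psi,\Omega$ exactly as in (\ref{defB})--(\ref{psiB}) produces the same polynomial system for $(\alpha,\beta,\gamma)$ as in Lemma~\ref{prop1} with $R=1$. Here the argument bifurcates, and this is the essential difference from the compact case: if $\omega\neq1$ (equivalently $\omega^2\neq1$) the system forces $\tau=2$ --- the relation that $\tau>2$ previously excluded, but which is now admissible --- whereas if $\omega=1$ one obtains $\gamma=1$, $\tau\neq1$ and $\alpha=(1-\tau)^{-1}$, $\beta=(\tau-1)^{-1}$, which, after transport through $\Omega$, are precisely relations (\ref{EquProp1bis}ghi). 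The main obstacle is controlling this dichotomy cleanly: without the inequality $\tau>2$ one can no longer discard the branch $\omega\neq1$, so both cases must be retained, and one must verify that the normalization bookkeeping ($R=1$, and $\delta$ in place of $e^*$) stays consistent through the entire polynomial manipulation.
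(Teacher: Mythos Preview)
Your proposal is correct and follows essentially the same route as the paper: the paper's own proof simply says that one argues as in Lemma~\ref{prop1} but without Lemma~\ref{LemmeTau}, which is exactly what you sketch, including the bifurcation into the cases $\omega\neq1$ (forcing $\tau=2$) and $\omega=1$ (forcing $\tau\neq1$ and yielding (\ref{EquProp1bis}ghi)).

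The one genuine difference is your argument for $\tau\neq0$. The paper obtains this more directly: by cosemisimplicity the surjection $e:W\otimes W\to\C$ splits, i.e.\ there exists $\delta'$ with $e\delta'=\id_\C$, and Schur gives $\delta'=\alpha\delta$ for some $\alpha\in\C^*$, whence $\tau=\alpha^{-1}\neq0$. Your route via the absence of nilpotents in the commutative semisimple algebra $\End_H(W\otimes W)$ is equally valid and arguably more conceptual; the paper's argument is shorter and uses only that $\C$ splits off, not the full multiplicity-freeness of $W\otimes W$.
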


\begin{proof}
The fusion rules for $SO(3)$ give: $$W\otimes W \simeq \C \oplus W \oplus W^H_2$$
Then there exist $H$-colinear maps $e, \delta$ and $C$ satisfying (\ref{EquProp1bis}a), and a scalar $\tau \in \C$ such that $e\delta=\tau\id_\C$. By cosemisimplicity, there exists $\delta'$ such that $e\delta'=\id_{\C}$ and by Schur's lemma, there exists $\alpha \in \C^*$ such that $\delta' = \alpha \delta$. Hence $\tau \neq 0$. Moreover, any rescaling of $e$ and $\delta$ that leaves (\ref{EquProp1bis}a) intact also leaves $\tau$ invariant, hence $\tau$ only depends on $H$.

The rest of the proof follows the one of Lemma \ref{prop1} but without Lemma \ref{LemmeTau}.
\end{proof}

In the rest of this paper, it seems convenient to distinguish the $SO(3)$-deformations by whether or not $\omega=1$.

\begin{nota}
 Let $H$ be a $SO(3)$-deformation. We say that $H$ is of type $\mathbf{I_{\tau}}$ if $\omega=1$, where $\tau \in \C^*$ is determined by $H$ according to Lemma \ref{prop1bis}. Otherwise, we say that $H$ is of type $\mathbf{II}$ (in that case, we always have $\tau=2$).
\end{nota}

$SO(3)$-deformations of type $\mathbf{I_{\tau}}$ are close to the compact case:

\begin{prop}\label{prop2}
Let $H$ be a $SO(3)$-deformation of type $\mathbf{I_{\tau}}$. Then there exist a finite dimensional, semisimple, measured algebra $(A,\varphi)$ with $\dim A \geq 4$, and a Hopf algebra morphism $f : \Aaut(A,\varphi) \to H$ such that $W\subset A$ is a $\Aaut(A,\varphi)$-subcomodule and $(\id_W\otimes f) \circ \alpha_W = \alpha_W'$, where $\alpha_W$ et $\alpha_W'$ are the coactions on $W$ of $\Aaut(A,\varphi)$ and $H$ respectively. Moreover, if $\tau\neq -1$, we can assume $(A,\varphi)$ normalized.
\end{prop}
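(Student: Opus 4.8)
The plan is to mimic the construction of Proposition \ref{PropAlgebre}, replacing the adjoint $e^*$ of the compact setting by the coevaluation $\delta$ supplied by Lemma \ref{prop1bis}, and discarding everything that relied on positivity (Lemma \ref{LemmInvol} and the use of Lemma \ref{LemmeTau}). Concretely, I set $A = \C \oplus W$, so that $\dim A = 1 + \dim W \geq 4$ by the fusion rules, and define $H$-colinear maps $m : A\otimes A \to A$, $u:\C\to A$, $\varphi:A\to\C$ by $u(1) = (1,0) =: 1_A$, $\varphi(\lambda,v) = \lambda$, $m(\lambda\otimes\mu) = \lambda\mu$, $m(\lambda\otimes v) = m(v\otimes\lambda) = \lambda v$, and, for $v,w\in W$,
$$m(v\otimes w) = \big((\tau-1)^{-1}e(v\otimes w),\, C(v\otimes w)\big).$$
Each map is colinear by construction. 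The only nontrivial point is associativity on $W^{\otimes 3}$, and this is precisely where the hypothesis that $H$ is of type $\mathbf{I_\tau}$ (i.e. $\omega=1$) enters: the $\C$-components of $m(m\otimes\id_W)$ and $m(\id_W\otimes m)$ agree by relation (\ref{EquProp1bis}e), while the $W$-components agree by relation (\ref{EquProp1bis}i), exactly as in the proof of Proposition \ref{PropAlgebre}. Had $\omega\neq1$, the scalar factors would fail to cancel, which is why the argument is restricted to type $\mathbf{I_\tau}$.

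Next I would check that $\varphi$ is a measure. The bilinear form $\varphi\circ m$ is block-diagonal for the decomposition $A = \C 1_A \oplus W$: it equals $(\lambda,\mu)\mapsto\lambda\mu$ on $\C 1_A$, it vanishes on the cross terms, and it equals $(\tau-1)^{-1}e$ on $W$. Since $H$ is of type $\mathbf{I_\tau}$ we have $\tau\neq1$, and the duality relations (\ref{EquProp1bis}a) exhibit $\delta$ as a two-sided inverse copairing for $e$, so $e$ is nondegenerate; hence $\varphi\circ m$ is nondegenerate and $(A,\varphi)$ is a finite dimensional measured algebra.

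The main obstacle is semisimplicity, as the $C^*$-argument of Proposition \ref{PropAlgebre} is no longer available. Here I would argue through subcomodules: since $H$ is cosemisimple, the Jacobson radical $\mathrm{rad}(A)$ is an $H$-subcomodule, and as $A = \C 1_A\oplus W$ is the sum of the two non-isomorphic simple comodules $\C$ and $W$, its only subcomodules are $0$, $\C 1_A$, $W$ and $A$. A proper nilpotent ideal cannot be $A$, cannot be $\C 1_A$ (which contains the unit), and cannot be $W$ (which is not even a subalgebra, since $e\neq0$ forces $W\cdot W$ to have a nonzero scalar component); hence $\mathrm{rad}(A) = 0$ and $A$ is semisimple. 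When $\tau\neq-1$ one may alternatively exhibit the explicit separability idempotent $p = (\tau+1)^{-1}\tilde\delta(1)$: dualizing the block form gives $\tilde\delta(1) = 1_A\otimes1_A + (\tau-1)\delta(1)$, one has $m(\tilde\delta(1)) = (\tau+1)1_A$ by (\ref{EquProp1bis}cd), and $a\cdot\tilde\delta(1) = \tilde\delta(1)\cdot a$ for all $a\in A$ by (\ref{EquProp1bis}be); this element degenerates exactly at $\tau=-1$, which foreshadows why that value is excluded from the normalization claim.

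Finally, $(A,\varphi)$ being a finite dimensional semisimple measured $H$-comodule algebra with $\varphi$ equivariant, the universal property of the quantum automorphism group yields a Hopf algebra morphism $f:\Aaut(A,\varphi)\to H$, and since $W = \ker\varphi$ is an $\Aaut(A,\varphi)$-subcomodule of $A$, the coactions match as required. For the normalization statement I would compute $\tilde\varphi(1_A)$: from $\tilde\delta(1) = 1_A\otimes1_A + (\tau-1)\delta(1)$ and the relations $C\delta=0$, $e\delta=\tau\,\id_\C$ of (\ref{EquProp1bis}cd), one obtains $\tilde\varphi(1_A) = \tau+1$. Since $\dim\Hom_H(A,\C)=1$, Schur's lemma forces $\tilde\varphi = (\tau+1)\varphi$; hence $(A,\varphi)$ is homogeneous precisely when $\tau\neq-1$, and as $\varphi(1_A)=1\neq0$ it is then normalizable, so by Remark \ref{RemNorm} it may be rescaled to a normalized measure.
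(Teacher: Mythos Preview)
Your proof is correct and follows the paper's construction essentially verbatim: the definition of $(A,m,u,\varphi)$, the associativity check via (\ref{EquProp1bis}e,i), the passage to $f:\Aaut(A,\varphi)\to H$, and the normalization computation $\tilde\varphi=(\tau+1)\varphi$ are all exactly what the paper does.

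The one genuine difference is the semisimplicity argument. What you present as an ``alternative'' --- the separability idempotent $(\tau+1)^{-1}\tilde\delta(1)$ together with the centrality computation $a\,\tilde\delta(1)=\tilde\delta(1)\,a$ --- is in fact the paper's \emph{only} argument for semisimplicity; the paper does not treat the case $\tau=-1$ separately, so its proof has a small gap there that your subcomodule argument for $\mathrm{rad}(A)$ covers. Your primary argument (the radical is an $H$-subcomodule because $H$ is cosemisimple, hence must be one of $0,\C 1_A,W,A$, and the last three are excluded) is cleaner and uniform in $\tau$, but the assertion that $\mathrm{rad}(A)$ is automatically a subcomodule over a cosemisimple $H$ deserves a one-line justification or reference --- it is true, but not entirely obvious.
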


\begin{proof}
The construction is essentially the same as in the proof of Theorem \ref{ClassTh}. The only difference is about the semisimplicity of the algebra.

Let $A$ be the $H$-comodule $\C\oplus W$ with $\dim A \geq 4$. Endow $A$ with the following $H$-colinear maps: define a product and a unit by 
$$(\lambda,v)(\mu, w)=(\lambda \mu + (\tau-1)^{-1}e(v\otimes w), \lambda w + \mu v + C(v\otimes w)), \ \ 1_A = (1, 0)$$
and a measure $\varphi : A \to \C$ by $\varphi(\lambda, v)=  \lambda$. As in the proof of Theorem \ref{ClassTh} and using relations (\ref{EquProp1bis}ei), $(A,m,u,\varphi)$ is a finite dimensional measured algebra.

Consider $\tilde{\delta}: \C \to A \otimes A$ defined by $\tilde{\delta}(1)= 1_A\otimes 1_A + (\tau - 1)\delta(1)$. Let $w \in W \subset A$. Then in $A\otimes A$, we have
$$\begin{aligned}
   w \tilde{\delta}(1) &= w\otimes 1 + 1\otimes (e\otimes \id_W)(\id_W \otimes \delta)(w)  + (\tau - 1)(C\otimes \id_W)(\id_W \otimes \delta)(w) \\
&\overset{(\ref{EquProp1bis}a)}{=} w\otimes 1 + 1\otimes w  + (\tau - 1)(C\otimes \id_W)(\id_W \otimes \delta)(w)\\
&\overset{(\ref{EquProp1bis}e)}{=} 1\otimes w + w\otimes 1 + (\tau - 1)(\id_W\otimes C)(\delta \otimes \id_W)(w)\\
&\overset{(\ref{EquProp1bis}a)}{=} 1\otimes w + (\id_W\otimes e)(\delta \otimes \id_W)(w)\otimes 1+ (\tau - 1)(\id_W\otimes C)(\delta \otimes \id_W)(w)=\tilde{\delta}(1) w.
  \end{aligned}$$
Hence for all $a\in A$, we have $a\delta(1)=\delta(1)a \in A\otimes A$. Put $r := (\tau+1)^{-1} \tilde{\delta(1)}$ so that $m(r)=1_A$ and $$s : A \to A\otimes A, \ a \mapsto a r$$ In view of the previous facts, $s$ is a $A$-$A$-bimodule morphism, and $m\circ s = \id_A$, then $A$ is separable and is semisimple.

Then $(A,\varphi)$ is a finite dimensional, semisimple, measured algebra. By construction of the structure maps, $A$ is a $H$-comodule algebra and $\varphi$ is equivariant, thus by universality, there exists a Hopf algebra morphism $f : \Aaut(A,\varphi) \to H$ such that $(\id_W\otimes f) \circ \alpha_A = \alpha_A'$.
Finally, $W=\ker (\varphi)$ is a $\Aaut(A,\varphi)$-subcomodule of $A$, and by definition of the coactions on $A$, we have $(\id_W\otimes f) \circ \alpha_W = \alpha_W'$.

Assume that $\tau \neq -1$. Then $\varphi$ is normalizable.

\begin{itemize}
 \item The map $$\tilde{\varphi} : A \overset{\id_A \otimes\tilde{\delta}}{\to} A\otimes A\otimes A \overset{m \otimes \id_A }{\to} A\otimes A \overset{m}{\to} A \overset{\varphi}{\to} \C$$ is a $H$-colinear map. Using Schur's lemma, we have $\dim(\Hom_H(A,\C))=1$, hence there exists $c\in \C$ such that $\tilde{\varphi} = c \varphi$. Let us compute $\tilde{\varphi}(1_A)=\tau+1$ as in the proof of Theorem \ref{ClassTh}, so $\tilde{\varphi} = (\tau+1) \varphi$, and $\varphi$ is homogeneous.
 \item We have $\varphi(1_A)=1$, so $\varphi$ is normalizable.
\end{itemize}
To summarize, there exists a finite dimensional, semisimple, measured, normalizable algebra $(A,\varphi)$. According to Remark \ref{RemNorm}, we can assume that $(A,\varphi)$ is normalized.
\end{proof}

A consequence of this proposition is the partial classification result:

\begin{theo}
 Let $H$ be a $SO(3)$-deformation of type $\mathbf{I_{\tau}}$ such that $\tau \neq -1$. Then there exist a finite dimensional, semisimple, measured algebra $(A,\varphi)$ with $\dim A \geq 4$, and a Hopf algebra isomorphism $\Aaut(A,\varphi) \simeq H$
\end{theo}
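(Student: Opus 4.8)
The plan is to prove that the Hopf algebra morphism $f$ produced by Proposition \ref{prop2} is an isomorphism, following the strategy of the proof of Theorem \ref{ClassTh} but replacing the input from \cite{Ban2, BanFussCat} (which rested on positivity) by Corollary \ref{EquComod}. Since $\tau \neq -1$, Proposition \ref{prop2} provides a finite dimensional, semisimple, normalized measured algebra $(A,\varphi)=(A_E,\tr_E)$ with $\dim A \geq 4$ together with a Hopf algebra morphism $f : \Aaut(A,\varphi) \to H$ such that $W\subset A$ is a subcomodule with $(\id_W\otimes f)\circ\alpha_W = \alpha_W'$, so that $f_*(W_1^A)\simeq W_1^H$, where $f_* : \Com(\Aaut(A,\varphi)) \to \Com(H)$ is the exact monoidal push-forward functor. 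Writing $B:=\Aaut(A,\varphi)$, Corollary \ref{EquComod} yields a $\C$-linear monoidal equivalence $\Com(B)\simeq^{\otimes}\Com(\gO(SO_{q^{1/2}}(3)))$ for some $q\in\C^*$ with $q^2-\Tr(E^{-1})q+1=0$, under which the tensor generator $W_1^B$ corresponds to the spin-$1$ comodule.

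The heart of the matter is to show that $q$ is generic, so that by Theorem \ref{RacUnit} the Hopf algebra $B$ is cosemisimple with the fusion rules of $SO(3)$, i.e. is itself an $SO(3)$-deformation with simple comodules $(W_n^B)_{n\in\N}$. I would argue by contradiction: assume $q$ is not generic and let $N_0=2N_1$ or $2N_1-1$ as in Theorem \ref{RacUnit}. In the range where the fusion remains semisimple, i.e. for $0\leq n\leq N_1-1$, the relation $W_n^B\otimes W_1^B\simeq W_{n-1}^B\oplus W_n^B\oplus W_{n+1}^B$, together with $f_*(W_1^B)\simeq W_1^H$ and the $SO(3)$ fusion rules of $H$, gives by induction $f_*(W_n^B)\simeq W_n^H$ (simple). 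At the first place where semisimplicity fails, Theorem \ref{RacUnit} exhibits the comodule $W_{N_1-1}^B\otimes W_1^B$ (namely $W_{N_1-1}\otimes W_1$ in the even case and $U_{2(N_1-1)}\otimes U_2$ in the odd case) as non-semisimple, with a composition series containing a repeated simple factor. Passing to Grothendieck rings and applying the ring homomorphism induced by $f_*$ then expresses the class of an actual $H$-comodule as a $\Z$-combination of the $[W_n^H]$ with a strictly negative coefficient; this is impossible, since $H$ is cosemisimple and the $[W_n^H]$ form a $\Z$-basis of $\R^+(H)$. Hence $q$ is generic.

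Once $B$ is known to be an $SO(3)$-deformation with simple comodules $(W_n^B)_{n\in\N}$, the argument concludes exactly as for Theorem \ref{ClassTh}: the induction $f_*(W_n^B)\simeq W_n^H$ now runs over all $n\in\N$, and since $f_*$ preserves the underlying vector spaces one has $\dim W_n^B=\dim W_n^H$ for every $n$. The same standard semi-ring argument as in the proof of Theorem \ref{ClassTh}, comparing the coefficient coalgebras of equal dimension $(\dim W_n^B)^2$, then shows that the surjection $f$ is an isomorphism, whence $H\simeq\Aaut(A,\varphi)$.

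I expect the genericity step to be the main obstacle, and it is precisely where the absence of positivity (the tool that produced Lemma \ref{LemmeTau} and the clean bound $\tau>2$ in the compact case) is felt. The delicate points are to match $W_1^B$ with the correct spin-$1$ generator of $\Com(\gO(SO_{q^{1/2}}(3)))$ so that Theorem \ref{RacUnit} governs its tensor powers, and to track composition-factor multiplicities through $f_*$ in both the even and odd root-of-unity regimes; the role formerly played by positivity is taken over here by the positivity of multiplicities in $\R^+(H)$, which is available because $H$ is assumed cosemisimple.
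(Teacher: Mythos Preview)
Your proposal is correct and follows essentially the same route as the paper: invoke Proposition \ref{prop2} to get $f:\Aaut(A,\varphi)\to H$ with $f_*(W_1^A)\simeq W_1^H$, use the monoidal equivalence with $\gO(SO_{q^{1/2}}(3))$ to access Theorem \ref{RacUnit}, rule out the root-of-unity cases by a contradiction coming from the repeated composition factor in the first non-semisimple tensor product, and conclude via the standard semi-ring argument once $q$ is generic. The only cosmetic difference is that you phrase the contradiction through a negative coefficient in the Grothendieck ring (solving for $[f_*(V_1)]$, resp.\ $[f_*(U_1\otimes V_1)]$), whereas the paper writes out both decompositions of $f_*(W_{N_1-1}^A\otimes W_1^A)$ (resp.\ $f_*(U_{2(N_1-1)}^A\otimes U_2^A)$) in $\Com(H)$ and appeals to uniqueness of the semisimple decomposition; these are the same argument.
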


\begin{proof}
Let us denote by $(W_n^H)_{n\in \N}$ the family of simple $H$-comodules, $W_1^H:=W$. According to Proposition \ref{prop2}, there exist a normalized algebra $(A, \varphi)$, with dimension $\geq 4$, and a Hopf algebra morphism $$ f : \Aaut(A,\varphi) \to H$$ such that $f_*(W^A)\simeq W^H$. According to Theorem \ref{SO3comod}, there exist $q\in \C^*$ and a monoidal equivalence $$\Com(\Aaut(A,\varphi))\simeq^{\otimes}\Com(\gO(SO_{q^{1/2}}(3)))$$ Let us denote by $W_n^A$, $V_n^A$ and $U_n^A$ the $\Aaut(A,\varphi)$-comodules from Theorem \ref{RacUnit}. If $q$ is generic, then we have $f_*(W_n^A)\simeq W_n^H, \ \forall n \in \N$, so $f$ induces a semi-ring isomorphism $\R^+(\Aaut(A,\varphi))\simeq \R^+(H)$, and then by a standard semi-ring argument $f : \Aaut(A,\varphi) \to H$ is a Hopf algebra isomorphism. In the first case where $q$ is not generic, we have $f_*(W_n^A)\simeq W_n^H, \ \forall 1\leq  n \leq N_1-1$. So we get: $$f_*(W_{N_1-1}^A\otimes W_1^A)\simeq W_{N_1-1}^H\otimes W_1^H\simeq W^H_{N_1-2}\oplus W^H_{N_1-1}\oplus W^H_{N_1},$$ but on the other hand, using the simple filtration, we have:  $$f_*(W_{N_1-1}^A\otimes W_1^A)\simeq W_{N_1-1} \oplus f_*(V_1) \oplus  W_{N_1-2}\oplus W_{N_1-1}. $$ This contradicts the uniqueness of the decomposition of a semisimple comodule into a direct sum of simple comodules. In the last case, we have $f_*(U_{2n}^A)\simeq W_n^H, \ \forall 1\leq  n \leq N_1-1$. Then we get:
$$f_*(U^A_{2(N_1-1)}\otimes U^A_2)\simeq W_{N_1-1}^H \otimes W_1^H \simeq W^H_{N_1-2}\oplus W^H_{N_1-1}\oplus W^H_{N_1},$$ but on the other hand we have:
$$f_*(U^A_{2(N_1-1)}\otimes U^A_2)\simeq W_{N_1-2}^H \oplus f_*(U_1^A\otimes V_1^A)\oplus W_{N_1-2}^H.$$ This also contradicts the uniqueness of the decomposition of a semisimple comodule into a direct sum of simple comodules. Then $\Aaut(A,\varphi)$ is cosemisimple, $q$ is generic and $f$ is an isomorphism.
\end{proof}

\section*{Appendix: Proof of lemma \ref{LemTech}}

We begin by a particular case.

\begin{lemmaa}\label{LemmTechBis}
 Let $E,F\in ob(\A)$. Assume that $E_{\lambda}$ is a diagonal matrix for all $\lambda=1,\dots,n_E$, that $F_{\mu}$ is a lower-triangular matrix for all $\mu=1,\dots,n_F$, that $\Tr(E^{-1})=\Tr(F^{-1})$ and $\tr(E_{\lambda})=\tr(F_{\mu})$ for all $\lambda,\mu$. Then the algebra $\A(E,F)$ is nonzero.
\end{lemmaa}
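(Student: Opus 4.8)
The plan is to prove that $\A(E,F)\neq 0$ via Bergman's Diamond Lemma: I would exhibit the defining relations as a confluent reduction system on words in the generators $X_{kl,\mu}^{ij,\lambda}$, so that the set of irreducible words is a linear basis of $\A(E,F)$. Since the empty word (the unit) is never the submonomial of any leading term, it is irreducible and is not reduced to $0$, whence $\A(E,F)\neq 0$. Everything therefore reduces to setting up the rewriting system and checking confluence.

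First I would fix a total order on the generators and extend it to an admissible monomial order on words, arranged so that the hypotheses isolate a single leading term in each relation. The two quadratic families $\sum_{q}X_{ij,\lambda}^{rq,\nu}X_{kl,\mu}^{qs,\nu}=\delta_{\lambda\mu}\delta_{jk}X_{il,\mu}^{rs,\nu}$ and $\sum_{r,s}F_{rs,\mu}X_{kr,\mu}^{ip,\lambda}X_{sl,\mu}^{qj,\nu}=\delta_{\lambda\nu}E_{pq,\lambda}X_{kl,\mu}^{ij,\lambda}$ become rules rewriting the leading quadratic monomial as a combination of a single generator and lower words; here the lower-triangularity of each $F_\mu$ is precisely what guarantees a well-defined leading term in the second family, since $F_{rs,\mu}=0$ for $r<s$. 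The two linear families $\sum_{\mu,k}X_{kk,\mu}^{ij,\lambda}=\delta_{ij}$ and $\sum_{\mu,k,l}E^{-1}_{kl,\mu}X_{ij,\lambda}^{kl,\mu}=F^{-1}_{ij,\lambda}$ become rules eliminating one distinguished generator; here the diagonality of each $E_\lambda$ collapses the second sum to its $k=l$ terms, again isolating a leading generator. These two normal-form hypotheses are exactly what make an admissible order available in this base case.

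Next I would verify that every overlap ambiguity between two rules is resolvable, i.e.\ that the two reductions of an overlapping word coincide. This is the core of the argument and proceeds case by case: quadratic–quadratic overlaps through a shared middle generator, mixed overlaps where a linear normalization meets one of the quadratic monomials, and linear–linear overlaps. The scalar hypotheses $\Tr(E^{-1})=\Tr(F^{-1})$ and $\tr(E_\lambda)=\tr(F_\mu)$ enter precisely here: contracting the quadratic relations against the linear normalizations produces the constants $\Tr(E^{-1}),\ \Tr(F^{-1})$ and $\tr(E_\lambda),\ \tr(F_\mu)$ on the two sides of certain ambiguities, and confluence holds exactly when these constants match. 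Once all ambiguities are shown resolvable, the Diamond Lemma delivers the basis of irreducible words and the conclusion is immediate.

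The main obstacle is the combinatorial bookkeeping in checking resolvability of all the overlaps: with four relation families indexed by several block and matrix indices, the overlap analysis is long, and at each step one must track how the diagonality of $E$ and the triangularity of $F$ restrict the admissible index patterns. This lengthy verification is what the Appendix carries out, and it is also the reason the simplifying normal forms are imposed here before the general Lemma~\ref{LemTech} is reduced, by a conjugation, to this case.
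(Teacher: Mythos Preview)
Your proposal is correct and follows exactly the approach of the paper: the Appendix applies Bergman's Diamond Lemma by fixing a total order on the generators, rewriting the four defining relation families as reduction rules whose leading terms are isolated precisely thanks to the diagonality of $E$ and the lower-triangularity of $F$, and then checking case by case that all inclusion and overlap ambiguities are resolvable, with the scalar hypotheses $\Tr(E^{-1})=\Tr(F^{-1})$ and $\tr(E_\lambda)=\tr(F_\mu)$ entering exactly where you anticipate. The only difference is that the paper carries out the lengthy ambiguity checks explicitly, whereas you describe them schematically.
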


\begin{proof}
 We want to apply the diamond Lemma \cite{Ber}, for which we freely use the definitions and notations of \cite{KS} (although there are a few misprints there). We have to order the monomials $X_{kl,\mu}^{ij,\lambda}$. We order the set of generators with the following order  ($1\leq \lambda, \nu \leq n_E$, $1\leq i,j \leq d^E_{\lambda}$, $1\leq r,s \leq d^E_{\nu}$, $1\leq \mu, \eta \leq n_F$, $1\leq k,l\leq d^F_{\mu}$, $1\leq p,q\leq d^F_{\eta}$)
$$X_{kl,\mu}^{ij,\lambda} < X_{pq,\eta}^{rs,\nu}   \text{   if   } \left\{ \begin{array}{l}
(\lambda, \mu)<(\nu, \eta) \\ (\lambda, \mu)=(\nu, \eta) \text{ and } (i,k)<(r,p)  \\ (\lambda, \mu)=(\nu, \eta),  (i,k)=(r,p) \text{ and } (j,l)>(s,q) \end{array}\right.$$
Then order the set of monomials according to their length, and two monomials of the same length are ordered lexicographically.

Now we can write a nice presentation for $\A(E,F)$ ($d_F:=d_{n_F}^F,d_E:=d_{n_E}^E)$: 

$$
 (DL)\left\{\begin{aligned}
&X^{r1,\nu}_{ij,\lambda}X^{1s,\nu}_{kl,\mu}=\delta_{\lambda \mu} \delta_{jk} X_{il,\mu}^{rs,\nu}-\sum_{t=2}^{d^E_{\nu}} X_{ij,\lambda}^{rt,\nu}X_{kl,\mu}^{ts,\nu} &(1)\\
&X^{ij,\lambda}_{d_Fd_F,n_F}=\delta_{ij}-\sum_{\mu=1}^{n_F-1}\sum_{k=1}^{d_{\mu}^F} X_{kk,\mu}^{ij,\lambda}- \sum_{k<d_F} X_{kk,n_F}^{ij,\lambda}&(2)\\
&X_{ij,\lambda}^{d_Ed_E,n_E}=E_{d_Ed_E,n_E}\big(F^{-1}_{ij,\lambda}-\sum_{\mu=1}^{n_E-1}\sum_{t=1}^{d^E_{\mu}}E^{-1}_{tt,\mu} X_{ij,\lambda}^{tt,\mu}-\sum_{t<d_E}E^{-1}_{tt,n_E} X_{ij,\lambda}^{tt,n_E}\big) &(3) \\
&X_{i1,\lambda}^{kp,\mu}X_{1j,\lambda}^{ql,\nu}=F_{11,\lambda}^{-1}\big(\delta_{\mu \nu}E_{pq,\mu}X_{ij,\lambda}^{kl,\mu}-\sum_{(n,m)\neq(1,1)} F_{nm,\lambda} X_{in,\lambda}^{kp,\mu}X_{mj,\lambda}^{ql,\nu}\big) &(4)\\
\end{aligned} \right. 
$$

Then we have the following inclusion ambiguities: 

$$\begin{array}{lll}
 (X_{i1,\lambda}^{r1,\nu}X_{1j,\lambda}^{1s,\nu}; X_{i1,\lambda}^{r1,\nu}X_{1j,\lambda}^{1s,\nu})  &(X_{ij,\lambda}^{r1,\nu}X_{d_Fd_F,n_F}^{1s,\nu}; X_{d_Fd_F,n_F}^{1s,\nu})  &(X_{d_Fd_F,n_F}^{r1,\nu},X_{d_Fd_F,n_F}^{r1,\nu}X_{ij,\lambda}^{1s,\nu})\\
 & & \\
 (X_{i1,\lambda}^{d_Ed_E,n_E};X_{i1,\lambda}^{d_Ed_E,n_E}X_{1j,\lambda}^{ql,\nu})  &(X_{i1,\lambda}^{kp,\mu}X_{1j,\lambda}^{d_Ed_E,n_E}; X_{1j,\lambda}^{d_Ed_E,n_E})  &(X_{d_Fd_F,n_F}^{d_Ed_E,n_E};X_{d_Fd_F,n_F}^{d_Ed_E,n_E})\\
 & & \\
 (X_{ij,\lambda}^{r1,\nu}X_{kl,\mu}^{11,\nu}; X_{kl,\mu}^{11,\nu}X_{pq,\tau}^{1s,\nu}) &(X^{kp,\mu}_{i1,\lambda}X^{ql,\nu}_{11,\lambda}; X^{ql,\nu}_{11,\lambda}X^{uv,\tau}_{1r,\lambda}) &
\end{array}$$

and the following overlap ambiguities:

\begin{align*} 
 &(X^{r1,\nu}_{ij,\lambda}X^{1s,\nu}_{k1,\mu}; X^{1s,\nu}_{k1,\mu}X^{pq,\eta}_{1l,\mu}) & &(X^{kl,\mu}_{i1,\lambda}X^{r1,\nu}_{1j,\lambda}; X^{r1,\nu}_{1j,\lambda}X^{1s,\nu}_{pq,\eta})
\end{align*}

Let us show that all this ambiguities are resolvable (recall that ''$\to$'' means we perform a reduction):

Let us begin by the ambiguity $(X_{ij,\lambda}^{r1,\nu}X_{kl,\mu}^{11,\nu}; X_{kl,\mu}^{11,\nu}X_{pq,\tau}^{1s,\nu})$. On the first hand, we have:

\begin{flushleft}
$\begin{aligned}
  &\delta_{\lambda \mu}\delta_{jk} X_{il,\mu}^{r1,\nu} X_{pq,\tau}^{1s,\nu} - \sum_{t=2}^{d_\nu^E} X_{ij,\lambda}^{rt,\nu}X_{kl,\mu}^{t1,\nu}X_{pq,\tau}^{1s,\nu}\\
  \overset{(1)}{\to} &\delta_{\lambda \mu} \delta_{\lambda \tau} \delta_{jk}\delta_{lp} X_{iq,\mu}^{rs, \nu} - \delta_{\lambda \mu} \delta_{jk} \sum_{u=2}^{d_\nu^E} X_{il,\mu}^{ru,\nu}X_{pq, \tau}^{us, \nu} - \delta_{\mu \tau}\delta_{lp}\sum_{t=2}^{d_\nu^E} X_{ij,\lambda}^{rt,\nu}X_{kq, \mu}^{ts, \nu} + \sum_{t,u} X_{ij,\lambda}^{rt,\nu} X_{kl,\mu}^{tu,\nu} X_{pq, \tau}^{us, \nu}
 \end{aligned}$
\end{flushleft}
and on the other hand, we have:
\begin{flushleft}
$\begin{aligned}
  &\delta_{\mu \tau} \delta_{lp} X_{ij,\lambda}^{r1,\nu} X_{kq,\mu}^{1s,\nu} - \sum_{u=2}^{d_\nu^E} X_{ij,\lambda}^{r1,\nu} X_{kl,\mu}^{1u,\nu} X_{pq,\tau}^{us, \nu}\\
  \overset{(1)}{\to} &\delta_{\lambda \mu} \delta_{\lambda \tau} \delta_{jk}\delta_{lp} X_{iq,\mu}^{rs, \nu} - \delta_{\lambda \mu} \delta_{jk} \sum_{u=2}^{d_\nu^E} X_{il,\mu}^{ru,\nu}X_{pq, \tau}^{us, \nu} - \delta_{\mu \tau}\delta_{lp}\sum_{t=2}^{d_\nu^E} X_{ij,\lambda}^{rt,\nu}X_{kq, \mu}^{ts, \nu} + \sum_{t,u} X_{ij,\lambda}^{rt,\nu} X_{kl,\mu}^{tu,\nu} X_{pq, \tau}^{us, \nu}
 \end{aligned}$
\end{flushleft}
The ambiguity $(X^{kp,\mu}_{i1,\lambda}X^{ql,\nu}_{11,\lambda}; X^{ql,\nu}_{11,\lambda}X^{uv,\tau}_{1r,\lambda})$ is resolvable by the same kind of computations.

Let us show that the ambiguity $(X_{i1,\lambda}^{r1,\nu}X_{1j,\lambda}^{1s,\nu}; X_{i1,\lambda}^{r1,\nu}X_{1j,\lambda}^{1s,\nu})$ is resolvable. On the first hand, we have:
\begin{flushleft}
$\begin{aligned}
 &F^{-1}_{11,\lambda}\big(-\sum_{(n,m)\neq(1,1)} F_{nm,\lambda} X_{in,\lambda}^{r1,\nu}X_{mj,\lambda }^{1s,\nu} +E_{11,\nu} X_{ij,\lambda}^{rs,\nu}\big)\\
\overset{(2)}{\to} &F_{11,\lambda}^{-1}\big(-\sum_{(n,m)\neq(1,1)} F_{nm,\lambda} \big(-\sum_{t=2}^{d_{\nu}^E} X_{in,\lambda}^{rt,\nu}X_{mj,\lambda}^{ts,\nu} + \delta_{nm}X_{ij,\lambda}^{rs,\nu}\big) + E_{11,\nu} X_{ij,\lambda}^{rs,\nu}\big)\\
= &F_{11,\lambda}^{-1}\big(\sum_{(n,m)\neq(1,1)}\sum_{t=2}^{d_{\nu}^E} F_{nm,\lambda} X_{in,\lambda}^{rt,\nu}X_{mj,\lambda}^{ts,\nu} - (\sum_{(n,m)\neq(1,1)} \delta_{nm} F_{nm,\lambda})X_{ij,\lambda}^{rs,\nu}\big) + E_{11,\nu} X_{ij,\lambda}^{rs,\nu}\big)\\
= &F_{11,\lambda}^{-1}\big(\sum_{(n,m)\neq(1,1)}\sum_{t=2}^{d_{\nu}^E} F_{nm,\lambda} X_{in,\lambda}^{rt,\nu}X_{mj,\lambda}^{ts,\nu} + (F_{11,\lambda}+E_{11,\nu}-\tr (F_{\lambda})) X_{ij,\lambda}^{rs,\nu}\big)
\end{aligned}$
\end{flushleft}
and on the second hand, we have:
\begin{flushleft}
$\begin{aligned}
 &-\sum_{t=2}^{d_{\nu}^E} X_{i1,\lambda}^{rt,\nu}X_{1j,\lambda}^{ts,\nu} + X_{ij,\lambda}^{rs,\nu}\\
\overset{(4)}{\to} &F_{11,\lambda}^{-1} \big(\sum_{t=2}^{d_{\nu}^E}(\sum_{(n,m)\neq(1,1)} F_{nm,\lambda}X_{in,\lambda}^{rt,\nu} X_{mj,\lambda}^{ts,\nu} + E_{tt,\nu}X_{ij,\lambda}^{rs,\nu}) + F_{11,\lambda}X_{ij,\lambda}^{rs,\nu}\big)\\
= &F_{11,\lambda}^{-1}\big(\sum_{t=2}^{d_{\nu}^E}\sum_{(n,m)\neq(1,1)} F_{nm,\lambda} X_{in,\lambda}^{rt,\nu}X_{mj,\lambda}^{ts,\nu} + (F_{11,\lambda}+E_{11,\nu}-\tr (E_{\nu})) X_{ij,\lambda}^{rs,\nu}\big)\\
= &F_{11,\lambda}^{-1}\big(\sum_{t=2}^{d_{\nu}^E}\sum_{(n,m)\neq(1,1)} F_{nm,\lambda} X_{in,\lambda}^{rt,\nu}X_{mj,\lambda}^{ts,\nu} + (F_{11,\lambda}+E_{11,\nu}-\tr (F_{\lambda})) X_{ij,\lambda}^{rs,\nu}\big)
\end{aligned}$
\end{flushleft}
because $\tr(E_{\nu})=\tr(F_{\lambda})$ by assumption.

\amb{$(X_{ij,\lambda}^{r1,\nu}X_{d_Fd_F,n_F}^{1s,\nu}; X_{d_Fd_F,n_F}^{1s,\nu})$}
\begin{flushleft}
$\begin{aligned}
 & -\sum_{\mu=1}^{n_F-1} \sum_{k=1}^{d_{\mu}^F} X_{ij,\lambda}^{r1,\nu}X_{kk,\mu}^{1s,\nu}-\sum_{k<d_F}X_{ij,\lambda}^{r1,\nu}X_{kk,n_F}^{1s,\nu} + \delta_{1s} X_{ij,\lambda}^{r1,\nu}\\
\overset{(1)}{\to} & \sum_{\mu=1}^{n_F-1} \sum_{k=1}^{d_{\mu}^F}(\sum_{t=2}^{d_{\nu}^E} X_{ij,\lambda}^{rt,\nu}X_{kk,\mu}^{ts,\nu} - \delta_{\lambda \mu} \delta_{jk} X_{ik,\lambda}^{rs,\nu}) + \sum_{k<d_F} (\sum_{t=2}^{d_{\nu}^E} X_{ij,\lambda}^{rt,\nu}X_{kk,n_F}^{ts,\nu} - \delta_{\lambda n_F} \delta_{jk} X_{ik,n_F}^{rs,\nu})+ \delta_{1s} X_{ij,\lambda}^{r1,\nu}\\
= & \sum_{\mu=1}^{n_F-1} \sum_{k=1}^{d_{\mu}^F}\sum_{t=2}^{d_{\nu}^E} X_{ij,\lambda}^{rt,\nu}X_{kk,\mu}^{ts,\nu} + \sum_{k<d_F} \sum_{t=2}^{d_{\nu}^E} X_{ij,\lambda}^{rt,\nu}X_{kk,n_F}^{ts,\nu} - \sum_{\mu=1}^{n_F-1} \sum_{k=1}^{d_{\mu}^F}\delta_{\lambda \mu} \delta_{jk} X_{ik,\lambda}^{rs,\nu}  - \sum_{k<d_F} \delta_{\lambda n_F} \delta_{jk} X_{ik,n_F}^{rs,\nu}\\ 
& + \delta_{1s} X_{ij,\lambda}^{r1,\nu}\\
= & \sum_{\mu=1}^{n_F-1} \sum_{k=1}^{d_{\mu}^F}\sum_{t=2}^{d_{\nu}^E} X_{ij,\lambda}^{rt,\nu}X_{kk,\mu}^{ts,\nu} + \sum_{k<d_F} \sum_{t=2}^{d_{\nu}^E} X_{ij,\lambda}^{rt,\nu}X_{kk,n_F}^{ts,\nu} - \sum_{\mu=1}^{n_F} \sum_{k=1}^{d_{\mu}^F}\delta_{\lambda \mu} \delta_{jk} X_{ik,\lambda}^{rs,\nu} \\ 
& + \delta_{\lambda n_F} \delta_{j d_F} X_{id_F,n_F}^{rs,\nu}+ \delta_{1s} X_{ij,\lambda}^{r1,\nu} \\ 
= & \sum_{\mu=1}^{n_F-1} \sum_{k=1}^{d_{\mu}^F} \sum_{t=2}^{d_{\nu}^E}  X_{ij,\lambda}^{rt,\nu} X_{kk,\mu}^{ts,\nu} + \sum_{k<d_F}\sum_{t=2}^{d_{\nu}^E} X_{ij,\lambda}^{rt,\nu} X_{kk,n_F}^{ts,\nu} - \sum_{t=2}^{d_{\nu}^E} \delta_{ts} X_{ij,\lambda}^{rt,\nu} + \delta_{\lambda n_F} \delta_{j d_F} X_{id_F,n_F}^{rs,\nu}
\end{aligned}$
\end{flushleft}
On the other hand, we have:
\begin{flushleft}
$\begin{aligned}
 & -\sum_{t=2}^{d_{\nu}^E} X_{ij,\lambda}^{rt,\nu}X_{d_Fd_F,n_F}^{ts,\nu} + \delta_{\lambda n_F} \delta_{j d_F} X_{id_F,n_F}^{rs,\nu} \\
\overset{(2)}{\to} & \sum_{t=2}^{d_{\nu}^E} X_{ij,\lambda}^{rt,\nu}(\sum_{\mu=1}^{n_F-1} \sum_{k=1}^{d_{\mu}^F} X_{kk,\mu}^{ts,\nu} + \sum_{k<d_F} X_{kk,n_F}^{ts,\nu} - \delta_{ts}) + \delta_{\lambda n_F} \delta_{j d_F} X_{id_F,n_F}^{rs,\nu} 
 \end{aligned}$
 $\begin{aligned}
= & \sum_{t=2}^{d_{\nu}^E} \sum_{\mu=1}^{n_F-1} \sum_{k=1}^{d_{\mu}^F} X_{ij,\lambda}^{rt,\nu} X_{kk,\mu}^{ts,\nu} + \sum_{t=2}^{d_{\nu}^E}\sum_{k<d_F} X_{ij,\lambda}^{rt,\nu} X_{kk,n_F}^{ts,\nu} - \sum_{t=2}^{d_{\nu}^E} \delta_{ts} X_{ij,\lambda}^{rt,\nu} + \delta_{\lambda n_F} \delta_{j d_F} X_{id_F,n_F}^{rs,\nu}\\
\end{aligned}$
\end{flushleft}
The ambiguity $(X_{d_Fd_F,n_F}^{r1,\nu},X_{d_Fd_F,n_F}^{r1,\nu}X_{ij,\lambda}^{1s,\nu})$ is resolvable by the same kind of computations.

\amb{$(X_{i1,\lambda}^{d_Ed_E,n_E};X_{i1,\lambda}^{d_Ed_E,n_E}X_{1j,\lambda}^{ql,\nu})$}
\begin{flushleft}
$\begin{aligned}
 & F^{-1}_{11,\lambda}\big(-\sum_{(n,m)\neq (1,1)}F_{nm,\lambda} X_{in,\lambda}^{d_Ed_E,n_E}X_{mj,\lambda}^{ql,\nu} + \delta_{n_E \nu} E_{d_Eq,\nu}X_{ij,\lambda}^{d_El,\nu}\big)\\
\overset{(3)}{\to} & F^{-1}_{11,\lambda}E_{d_Ed_E,n_E} \big( \delta_{n_E \nu} \delta _{d_E q} X_{ij,\lambda}^{d_El,\nu} + \sum_{(n,m)\neq (1,1)}F_{nm,\lambda} ( \sum_{\mu=1}^{n_E-1}\sum_{t=1}^{d_{\mu}^E} E_{tt,\mu}^{-1} X_{in,\lambda}^{tt,\mu} \\ 
&+ \sum_{t<d_E} E_{tt,n_E}^{-1} X_{in,\lambda}^{tt,n_E} - F_{in,\lambda}^{-1})X_{mj,\lambda}^{ql,\nu}\big) \\
= & F^{-1}_{11,\lambda}E_{d_Ed_E,n_E} \big( \delta_{n_E \nu} \delta _{d_E q} X_{ij,\lambda}^{d_El,\nu} + \sum_{(n,m)\neq (1,1)}\sum_{\mu=1}^{n_E-1}\sum_{t=1}^{d_{\mu}^E} F_{nm,\lambda} E_{tt,\mu}^{-1} X_{in,\lambda}^{tt,\mu} \\ 
& + \sum_{(n,m)\neq(1,1)}\sum_{t<d_E} F_{nm,\lambda}E_{tt,n_E}^{-1} X_{in,\lambda}^{tt,n_E}-\sum_{(n,m)\neq(1,1)}F_{nm,\lambda}F_{in,\lambda}^{-1}X_{mj,\lambda}^{ql,\nu}\big)
\end{aligned}$
\end{flushleft}
and on the other hand:
\begin{flushleft}
$\begin{aligned}
 & E_{d_Ed_E,n_E}\big(F_{i1,\lambda}^{-1} X_{1j,\lambda}^{ql,\nu}-\sum_{\mu=1}^{n_E-1}\sum_{t=1}^{d_{\mu}^E} E^{-1}_{tt,\mu} X_{i1,\lambda}^{tt,\mu} X_{1j,\lambda}^{ql,\nu} - \sum_{t<d_E} E^{-1}_{tt,n_E} X_{i1,\lambda}^{tt,n_E} X_{1j,\lambda}^{ql,\nu}\big)\\
\overset{(4)}{\to} & E_{d_Ed_E,n_E} F^{-1}_{11,\lambda} \big( F_{11,\lambda}F_{i1,\lambda}^{-1} X_{1j,\lambda}^{ql,\nu} + \sum_{\mu=1}^{n_E-1}\sum_{t=1}^{d_{\mu}^E}E_{tt,\mu}^{-1} (\sum_{(n,m)\neq(1,1)} F_{nm,\lambda} X_{in,\lambda}^{tt,\mu} X_{mj,\lambda}^{ql,\nu} + \delta_{\mu \nu} E_{kq,\nu} X_{ij,\lambda}^{tl,\nu}) \\
& + \sum_{t<d_E}E_{tt,n_E}^{-1} (\sum_{(n,m)\neq(1,1)} F_{nm,\lambda} X_{in,\lambda}^{tt,n_E} X_{mj,\lambda}^{ql,\nu} + \delta_{n_E \nu} E_{tq,n_E} X_{ij,\lambda}^{tl,n_E} \big)\\
= & E_{d_Ed_E,n_E} F^{-1}_{11,\lambda} \big( F_{11,\lambda}F_{i1,\lambda}^{-1} X_{1j,\lambda}^{ql,\nu} - \sum_{\mu=1}^{n_E-1}\sum_{t=1}^{d_{\mu}^E} \delta_{\nu \lambda} E^{-1}_{tt,\mu}E_{tq,\nu} X_{ij,\lambda}^{tl,\nu} - \sum_{t<d_E} \delta_{n_E \nu } E^{-1}_{tt,n_E}E_{tq,\nu} X_{ij,\lambda}^{tl,n_E}\\ 
& + \sum_{\mu=1}^{n_E-1}\sum_{t=1}^{d_{\mu}^E}\sum_{(n,m)\neq (1,1)} E_{tt,\mu}^{-1}  F_{nm,\lambda} X_{in,\lambda}^{tt,\mu} X_{mj,\lambda}^{ql,\nu} + \sum_{t<d_E}\sum_{(n,m)\neq(1,1)}E_{tt,n_E}^{-1}  F_{nm,\lambda} X_{in,\lambda}^{tt,n_E} X_{mj,\lambda}^{ql,\nu}\\
= & F^{-1}_{11,\lambda}E_{d_Ed_E,n_E} \big( \delta_{n_E \nu} \delta _{d_E q} X_{ij,\lambda}^{d_El,\nu}  + \sum_{(n,m)\neq (1,1)}\sum_{\mu=1}^{n_E-1}\sum_{t=1}^{d_{\mu}^E} F_{nm,\lambda} E_{tt,\mu}^{-1} X_{in,\lambda}^{tt,\mu} \\
 & + \sum_{(n,m)\neq(1,1)}\sum_{t<d_E} F_{nm,\lambda}E_{tt,n_E}^{-1} X_{in,n_E}^{tt,\mu} -  \sum_{(n,m)\neq(1,1)}F_{nm,\lambda}F_{in,\lambda}^{-1}X_{mj,\lambda}^{ql,\nu}\big)
\end{aligned}$
\end{flushleft}
The ambiguity $(X_{i1,\lambda}^{kp,\mu}X_{1j,\lambda}^{d_Ed_E,n_E}; X_{1j,\lambda}^{d_Ed_E,n_E})$ is resolvable by the same kind of computations.

\amb{$(X_{d_Fd_F,n_F}^{d_Ed_E,n_E};X_{d_Fd_F,n_F}^{d_Ed_E,n_E})$}
\begin{flushleft}
$\begin{aligned}
 &-\sum_{\mu=1}^{n_F-1} \sum_{k=1}^{d_{\mu}^F} X_{kk,\mu}^{d_Ed_E,n_E} - \sum_{k<d_F} X_{kk,n_F}^{d_Ed_E,n_E} + 1 \\
\overset{(3)}{\to} &E_{d_Ed_E,n_E}\big(-\sum_{\mu=1}^{n_F-1} \sum_{k=1}^{d_{\mu}^F}(-\sum_{\lambda=1}^{n_E-1}\sum_{t=1}^{d_{\lambda}^E} E_{tt,\lambda}^{-1} X_{kk,\mu}^{tt,\lambda} - \sum_{t< d_E} E_{tt,n_E}^{-1}X_{kk,\mu}^{tt,n_E} + F^{-1}_{kk,\mu}) \\
 & -\sum_{k< d_F}(-\sum_{\lambda=1}^{n_E-1}\sum_{t=1}^{d_{\lambda}^E} E_{tt,\lambda}^{-1} X_{kk,n_F}^{tt,\lambda} - \sum_{t< d_E} E_{tt,n_E}^{-1}X_{kk,n_F}^{tt,n_E} + F^{-1}_{kk,n_F}) + E_{d_Ed_E,n_E}^{-1} \big)
 \end{aligned}$
\end{flushleft}
\begin{flushleft}
 $\begin{aligned}
 = &E_{d_Ed_E,n_E}\big(\sum_{\mu=1}^{n_F-1} \sum_{k=1}^{d_{\mu}^F} \sum_{\lambda=1}^{n_E-1}\sum_{t=1}^{d_{\lambda}^E} E_{tt,\lambda}^{-1} X_{kk,\mu}^{tt,\lambda} + \sum_{\mu=1}^{n_F-1} \sum_{k=1}^{d_{\mu}^F} \sum_{t< d_E} E_{tt,n_E}^{-1}X_{kk,\mu}^{tt,n_E} - \sum_{\mu=1}^{n_F-1} \tr(F^{-1}_{\mu}) \\
 & +\sum_{k< d_F}\sum_{\lambda=1}^{n_E-1}\sum_{t=1}^{d_{\lambda}^E} E_{tt,\lambda}^{-1} X_{kk,n_F}^{tt,\lambda} - \sum_{k< d_F} \sum_{t< d_E} E_{tt,n_E}^{-1}X_{kk,n_F}^{tt,n_E} + F^{-1}_{d_Fd_F,n_F} - \tr(F^{-1}_{n_F}) + E_{d_Ed_E,n_E}^{-1}\big)\\
 = &E_{d_Ed_E,n_E}\big(\sum_{\mu=1}^{n_F-1} \sum_{k=1}^{d_{\mu}^F}\sum_{\lambda=1}^{n_E-1}\sum_{t=1}^{d_{\lambda}^E} E_{tt,\lambda}^{-1} X_{kk,\mu}^{tt,\lambda} + \sum_{\mu=1}^{n_F-1} \sum_{k=1}^{d_{\mu}^F} \sum_{t< d_E} E_{tt,n_E}^{-1}X_{kk,\mu}^{tt,n_E} +\sum_{k< d_F}\sum_{\lambda=1}^{n_E-1}\sum_{t=1}^{d_{\lambda}^E} E_{tt,\lambda}^{-1} X_{kk,n_F}^{tt,\lambda}\\
& - \sum_{k< d_F} \sum_{t< d_E} E_{tt,n_E}^{-1}X_{kk,n_F}^{tt,n_E}  + F^{-1}_{d_Fd_F,n_F} + E_{d_Ed_E,n_E}^{-1} - \Tr(F^{-1})\big)
\end{aligned}$
\end{flushleft}
and on the other hand:
\begin{flushleft}
$\begin{aligned}
& E_{d_Ed_E,n_E}\big( -\sum_{\lambda=1}^{n_E-1}\sum_{t=1}^{d_{\mu}^E} E^{-1}_{tt,\lambda} X_{d_Fd_F,n_F}^{tt,\lambda} - \sum_{t<d_E} E^{-1}_{tt,n_E} X_{d_Fd_F,n_F}^{tt,n_E} + F^{-1}_{d_Fd_F,n_F}\big)\\
\overset{(2)}{\to} & E_{d_Ed_E,n_E} \big(-\sum_{\lambda=1}^{n_E-1}\sum_{t=1}^{d_{\mu}^E} E^{-1}_{tt,\lambda}(-\sum_{\mu=1}^{n_F-1} \sum_{k=1}^{d_{\mu}^F} X_{kk,\mu}^{tt,\lambda} -\sum_{k<d_F} X_{kk,n_F}^{tt,\lambda} + 1) \\ 
&-\sum_{t<d_E} E^{-1}_{tt,n_E} (-\sum_{\mu=1}^{n_F-1} \sum_{k=1}^{d_{\mu}^F} X_{kk,\mu}^{tt,n_E} -\sum_{k<d_F} X_{kk,n_F}^{tt,n_E} + 1) + F^{-1}_{d_Fd_F,n_F} \big)\\
= & E_{d_Ed_E,n_E}\big(\sum_{\mu=1}^{n_F-1} \sum_{k=1}^{d_{\mu}^F}\sum_{\lambda=1}^{n_E-1}\sum_{t=1}^{d_{\mu}^E} E_{tt,\lambda}^{-1} X_{kk,\mu}^{tt,\lambda} + \sum_{\mu=1}^{n_F-1} \sum_{k=1}^{d_{\mu}^F} \sum_{t< d_E} E_{tt,n_E}^{-1}X_{kk,\mu}^{tt,n_E}   +\sum_{k< d_F}\sum_{\lambda=1}^{n_E-1}\sum_{t=1}^{d_{\mu}^E} E_{tt,\lambda}^{-1} X_{kk,n_F}^{tt,\lambda} \\
 &- \sum_{k< d_F} \sum_{t< d_E} E_{tt,n_E}^{-1}X_{kk,n_F}^{tt,n_E} + F^{-1}_{d_Fd_F,n_F} + E_{d_Ed_E,n_E}^{-1} - \Tr(E^{-1})\big)\\
= & E_{d_Ed_E,n_E}\big(\sum_{\mu=1}^{n_F-1} \sum_{k=1}^{d_{\mu}^F}\sum_{\lambda=1}^{n_E-1}\sum_{t=1}^{d_{\mu}^E} E_{tt,\lambda}^{-1} X_{kk,\mu}^{tt,\lambda} + \sum_{\mu=1}^{n_F-1} \sum_{k=1}^{d_{\mu}^F} \sum_{t< d_E} E_{tt,n_E}^{-1}X_{kk,\mu}^{tt,n_E}+\sum_{k< d_F}\sum_{\lambda=1}^{n_E-1}\sum_{t=1}^{d_{\mu}^E} E_{tt,\lambda}^{-1} X_{kk,n_F}^{tt,\lambda}\\ & 
 - \sum_{k< d_F} \sum_{t< d_E} E_{tt,n_E}^{-1}X_{kk,n_F}^{tt,n_E} + F^{-1}_{d_Fd_F,n_F} + E_{d_Ed_E,n_E}^{-1} - \Tr(F^{-1})\big)
\end{aligned}$
\end{flushleft}
because $\Tr(E^{-1})=\Tr(F^{-1})$ by assumption.

Now, \Amb{$(X^{r1,\nu}_{ij,\lambda}X^{1s,\nu}_{k1,\mu}; X^{1s,\nu}_{k1,\mu}X^{pq,\eta}_{1l,\mu})$}
\begin{flushleft}
$\begin{aligned}
 & F^{-1}_{11,\mu}\big(-\sum_{(n,m)\neq (1,1)} F_{nm,\mu} X_{ij,\lambda}^{r1,\nu} X_{kn,\mu}^{1s,\nu} X_{ml,\mu}^{pq,\eta} + \delta_{\nu \eta} E_{sp,\nu} X_{ij,\lambda}^{r1,\nu}X_{kl,\mu}^{1q,\nu}\big)\\
\overset{(1)}{\to} & F^{-1}_{11,\mu}\big(\sum_{(n,m)\neq (1,1)} F_{nm,\mu}(\sum_{t=2}^{d_{\nu}^E}X_{ij,\lambda}^{rt,\nu}X_{kl,\mu}^{tq,\nu} - \delta_{\lambda \mu} \delta_{jk} X_{in,\mu}^{rs,\nu})X_{ml,\mu}^{pq,\eta} \\
    & - \delta_{\nu \eta} E_{sp,\nu}(\sum_{t=2}^{d_{\nu}^E}X_{ij,\lambda}^{rt,\nu}X_{kl,\mu}^{tq,\nu} - \delta_{\lambda \mu} \delta_{jk} X_{il,\mu}^{rq,\nu})\big)
 \end{aligned}$
\end{flushleft}
\begin{flushleft}
 $\begin{aligned}
 = &F^{-1}_{11,\mu}\big(\delta_{\nu \eta} \delta_{\lambda \mu} \delta_{jk} E_{sp,\nu}X_{il,\mu}^{rq,\nu} - \delta_{\nu \eta} E_{sp,\nu}\sum_{t=2}^{d_{\nu}^E}X_{ij,\lambda}^{rt,\nu}X_{kl,\mu}^{tq,\nu} \\
 & - \delta_{\lambda \mu} \delta_{jk} \sum_{(n,m)\neq (1,1)} F_{nm,\mu}X_{in,\mu}^{rs,\nu}X_{ml,\mu}^{pq,\eta}+\sum_{(n,m)\neq (1,1)} \sum_{t=2}^{d_{\nu}^E}F_{nm,\mu}X_{ij,\lambda}^{rt,\nu}X_{kn,\mu}^{ts,\nu}X_{ml,\mu}^{pq,\eta} 
\end{aligned}$
\end{flushleft}
And on the other hand, we have:
\begin{flushleft}
$\begin{aligned}
 & \delta_{\lambda \mu} \delta_{jk} X_{i1,\mu}^{rs,\nu} X^{pq,\eta}_{1l,\mu} - \sum_{t=2}^{d_{\nu}^E} X_{ij,\lambda}^{rt,\nu}X_{k1,\mu}^{ts,\nu}X^{pq,\eta}_{1l,\mu}\\
\overset{(4)}{\to} & \delta_{\lambda \mu} \delta_{jk} F_{11,\mu}^{-1} (\sum_{(n,m)\neq(1,1)} F_{nm,\mu} X_{in,\lambda}^{rs,\nu} X_{ml,\mu}^{pq,\eta} + \delta_{\nu \eta} E_{sp,\nu} X_{il,\mu}^{rq,\nu}) \\
& + F_{11,\mu}^{-1}(\sum_{t=2}^{d_{\nu}^E} \sum_{(n,m)\neq(1,1)} F_{nm,\mu} X_{ij,\lambda}^{rt,\nu}X_{kn,\mu}^{ts,\nu} X_{ml,\mu}^{pq,\eta} + \delta_{\nu \eta} E_{sp,\nu} X_{ij,\lambda}^{rt,\nu}X_{kl,\mu}^{tq,\nu})\\
 = &F^{-1}_{11,\mu}\big(\delta_{\nu \eta} \delta_{\lambda \mu} \delta_{jk} E_{sp,\nu}X_{il,\mu}^{rq,\nu} - \delta_{\nu \eta} E_{sp,\nu}\sum_{t=2}^{d_{\nu}^E}X_{ij,\lambda}^{rt,\nu}X_{kl,\mu}^{tq,\nu} \\
 & - \delta_{\lambda \mu} \delta_{jk} \sum_{(n,m)\neq (1,1)} F_{nm,\mu}X_{in,\mu}^{rs,\nu}X_{ml,\mu}^{pq,\eta} +\sum_{(n,m)\neq (1,1)} \sum_{t=2}^{d_{\nu}^E}F_{nm,\mu}X_{ij,\lambda}^{rt,\nu}X_{kn,\mu}^{ts,\nu}X_{ml,\mu}^{pq,\eta} 
\end{aligned}$
\end{flushleft}
The last ambiguity $(X^{kl,\mu}_{i1,\lambda}X^{r1,\nu}_{1j,\lambda}; X^{r1,\nu}_{1j,\lambda}X^{1s,\nu}_{pq,\eta})$ is resolvable by the same kind of computations.

Then all the ambiguities are resolvable. According to the diamond lemma, the set of reduced monomials forms a linear basis of $\A(E,F)$. In particular, the algebra $\A(E,F)$ is nonzero.
\end{proof}

We have the following isomorphism:

\begin{lemmaa}
 Let $E,P\in \underset{\lambda=1}{\overset{n_E}{\bigoplus}} GL_{d_{\lambda}^E}(\C)$, $F,Q\in \underset{\lambda=1}{\overset{n_F}{\bigoplus}} GL_{d_{\lambda}^F}(\C)$. Then the algebras
$\A(E,F)$ and $\A(PEP^{-1},QFQ^{-1})$ are isomorphic.
\end{lemmaa}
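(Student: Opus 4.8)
The plan is to realise the isomorphism as the base change on generators induced by conjugating the multiplicative matrix $(X_{kl,\mu}^{ij,\lambda})$ by $P$ on the $E$-side and by $Q$ on the $F$-side, mirroring the method used for the diagonal case $\A(E,E)=\Aaut(A_E,\tr_E)$ in Section 2. Write $X$ for the generators of $\A(E,F)$ and $Y$ for those of $\A(PEP^{-1},QFQ^{-1})$. Since $\A(PEP^{-1},QFQ^{-1})$ is presented by generators and relations, to define an algebra morphism $\phi:\A(PEP^{-1},QFQ^{-1})\to\A(E,F)$ it suffices to prescribe the images of the $Y$'s and check that these images satisfy the four defining relations. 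I would set
$$\phi(Y_{kl,\mu}^{ij,\lambda})=\sum_{p,q,a,b}(P^{-1})_{pi,\lambda}\,P_{jq,\lambda}\,Q_{ka,\mu}\,(Q^{-1})_{bl,\mu}\,X_{ab,\mu}^{pq,\lambda},$$
that is, conjugate the upper ($E$-type) index pair by $P^{-1},P$ and the lower ($F$-type) pair by $Q,Q^{-1}$. Conceptually this is the map induced by the algebra isomorphisms $M\mapsto P^tMP^{-1t}$ and $M\mapsto Q^tMQ^{-1t}$, which intertwine the relevant traces via $\tr_E(P^tMP^{-1t})=\tr_{PEP^{-1}}(M)$; this viewpoint both predicts the formula and explains why it should preserve the relations.

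Next I would verify the four relations for the images $\phi(Y)$. The first two defining relations do not involve $E$ or $F$, and they follow from the corresponding relations of $\A(E,F)$ together with the telescoping identities $\sum_p(P^{-1})_{pi,\lambda}P_{jp,\lambda}=\delta_{ij}$ and $\sum_k Q_{ka,\mu}(Q^{-1})_{bk,\mu}=\delta_{ab}$. For instance, the relation $\sum_{\mu,k}\phi(Y_{kk,\mu}^{ij,\lambda})=\delta_{ij}$ reduces, after applying $\sum_k Q_{ka,\mu}(Q^{-1})_{bk,\mu}=\delta_{ab}$ and then the relation $\sum_{\mu,a}X_{aa,\mu}^{pq,\lambda}=\delta_{pq}$ for $X$, to $(PP^{-1})_{ji,\lambda}=\delta_{ij}$. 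The two measure relations are where the hypotheses $PEP^{-1}$ and $QFQ^{-1}$ enter: writing $(PEP^{-1})^{-1}=PE^{-1}P^{-1}$, the $P$-factors coming from $(PEP^{-1})^{-1}$ cancel against the $P^{-1},P$ in $\phi$ by the same telescoping, leaving exactly the relation $\sum_{\mu,k,l}E^{-1}_{kl,\mu}X_{ij,\lambda}^{kl,\mu}=F^{-1}_{ij,\lambda}$ of $\A(E,F)$, which then reassembles via the surviving $Q$-factors into $(QF^{-1}Q^{-1})_{ij,\lambda}=(QFQ^{-1})^{-1}_{ij,\lambda}$, as required.

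Finally, applying the same construction with $P,Q$ replaced by $P^{-1},Q^{-1}$ (legitimate since $E=P^{-1}(PEP^{-1})P$ and $F=Q^{-1}(QFQ^{-1})Q$) yields an algebra morphism $\psi:\A(E,F)\to\A(PEP^{-1},QFQ^{-1})$, and upon computing $\psi\circ\phi$ and $\phi\circ\psi$ on generators the factors $P^{-1}P$ and $QQ^{-1}$ collapse to Kronecker deltas, so both composites are the identity and $\phi$ is an isomorphism. I expect the only genuine effort to be the bookkeeping in the quadratic (fourth) relation $\sum_{r,s}F_{rs,\mu}X_{kr,\mu}^{ip,\lambda}X_{sl,\mu}^{qj,\nu}=\delta_{\lambda\nu}E_{pq,\lambda}X^{ij,\lambda}_{kl,\mu}$: it carries the most indices and two generator factors, so the cancellations of $P,P^{-1}$ and of $Q,Q^{-1}$ must be tracked carefully against the entries of $QFQ^{-1}$ and $PEP^{-1}$. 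Structurally, however, this is identical to the verification of the third relation, the cancellations being driven by the same identities $PP^{-1}=I$ and $QQ^{-1}=I$, so no new idea is needed.
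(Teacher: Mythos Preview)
Your approach is correct and is essentially the same as the paper's: the paper writes down the explicit base-change formula $f(X_{ij,\lambda}^{kl,\mu})=\sum P_{uk,\mu}P^{-1}_{lv,\mu}\,Y_{rs,\lambda}^{uv,\mu}\,Q^{-1}_{ir,\lambda}Q_{sj,\lambda}$ and its inverse, merely asserting (without spelling out the verification) that the relations are preserved. Your formula coincides with the paper's $f^{-1}$ after aligning the index conventions, and you supply more of the relation checks than the paper does.
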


\begin{proof}
Let us denote by $Y_{ij,\lambda}^{kl,\mu}$ the generators of $\A(PEP^{-1},QFQ^{-1})$. They satisfy the relations:
\begin{align*}
 &\sum_{q=1}^{d^E_{\nu}} Y_{ij,\lambda}^{rq,\nu}Y_{kl,\mu}^{qs,\nu}=\delta_{\lambda \mu} \delta_{jk} Y_{il,\mu}^{rs,\nu}, & & \sum_{\mu=1}^{n_F}\sum_{k=1}^{d^F_{\mu}} Y_{kk,\mu}^{ij,\lambda}=\delta_{ij},\\
 &\sum_{\mu=1}^{n_E}\sum_{k=1}^{d^E_{\mu}}(PEP^{-1})^{-1}_{kl,\mu} Y_{ij,\lambda}^{kl,\mu}=(QFQ^{-1})^{-1}_{ij,\lambda}, & & \sum_{r,s=1}^{d^F_{\lambda}} (QFQ^{-1})_{rs,\lambda} Y_{ir,\lambda}^{kp,\mu}Y_{sj,\lambda}^{ql,\nu}=\delta_{\mu \nu}(PEP^{-1})_{pq,\mu}Y_{ij,\lambda}^{kl,\mu}.
\end{align*}
This ensures the existence of an algebra morphism $f: \A(E,F) \to \A(PEP^{-1},QFQ^{-1})$ by setting $$f(X_{ij,\lambda}^{kl,\mu})=\sum_{r,s=1}^{d_{\mu}^E}\sum_{u,v=1}^{d_{\lambda}^F} P_{uk,\mu}P^{-1}_{lv,\mu} Y_{rs,\lambda}^{uv,\mu} Q^{-1}_{ir,\lambda}Q_{sj,\lambda}.$$
The inverse map is given by $$f^{-1}(Y_{ij,\lambda}^{kl,\mu})=\sum_{r,s=1}^{d_{\lambda}^F}\sum_{u,v=1}^{d_{\mu}^E} P^{-1}_{uk,\mu}P_{lv,\mu} X_{rs,\lambda}^{uv,\mu} Q_{ir,\lambda}Q^{-1}_{sj,\lambda}.$$
\end{proof}

We can now prove Lemma \ref{LemTech}.

\begin{proof}
 Let $E\in \underset{\lambda=1}{\overset{n_E}{\bigoplus}} GL_{d_{\lambda}^E}(\C)$, $F\in \underset{\mu=1}{\overset{n_F}{\bigoplus}} GL_{d_{\mu}^F}(\C)\in ob(\A)$ be such that $\Tr(E^{-1})=\Tr(F^{-1})$ and $\tr(E_{\lambda})=\tr(F_{\mu})$ for all $\lambda,\mu$. According to the previous lemma, let $P\in \underset{\lambda=1}{\overset{n_E}{\bigoplus}} GL_{d_{\lambda}^E}(\C)$ and $Q\in \underset{\mu=1}{\overset{n_F}{\bigoplus}} GL_{d_{\mu}^F}(\C)$ be such that $PEP^{-1}$ and $QFQ^{-1}$ are lower-triangular and let $M\in \underset{\lambda=1}{\overset{n_E}{\bigoplus}} GL_{d_{\lambda}^E}(\C)$ be diagonal such that $\Tr(E^{-1})=\Tr(M^{-1})$ and $\tr(E_{\lambda})=\tr(M_{\mu})$ for all $\lambda,\mu$. According to Lemma~\ref{LemmTechBis}, the algebra $\A(M,QFQ^{-1})$ is nonzero, and so is $\A(M,F)$. According to \cite{Bi2}, Proposition 2.15, $\A(E,F)$ is nonzero.
\end{proof}

\begin{rem}
We could have defined a bigger cogroupoid $\widehat{\A}$ such that $ob(\widehat{\A})=\{E\in \underset{\lambda=1}{\overset{n_E}{\bigoplus}} GL_{d_{\lambda}^E}(\C) \}$. In that case, $F=(F_1,\dots, F_n) \in (\C^*)^n$ is normalized if and only if $F_1=\dots =F_n=f$, and $\Aaut(A_F,\tr_F)=\Aaut(C(X_n), \psi)$ where $X_n=\{x_1, \dots, x_n\}$. The previous proof no longer works because the relations lead to more ambiguities, which are no longer resolvable.
\end{rem}

\subsection*{Acknowledgments}

The author is very grateful to J. Bichon for his advice and his careful proofreading. We also thank T. Banica for pointing the paper \cite{GrossSny}.

\bibliographystyle{plain}
\bibliography{bibliographie.bib}

%

\end{document}